\numberwithin{equation}{section}
\theoremstyle{definition}
\newtheorem{example}{Example}[section]
\newtheorem{definition}[example]{Definition}
\theoremstyle{plain}
\newtheorem{lemma}[example]{Lemma}
\newtheorem{theorem}[example]{Theorem} 
\newtheorem{proposition}[example]{Proposition}
\newtheorem{corollary}[example]{Corollary}
\DeclareMathOperator{\add}{{\rm add}}
\DeclareMathOperator{\proj}{{\rm proj}}
\DeclareMathOperator{\Hom}{{\rm Hom}}
\DeclareMathOperator{\End}{{\rm End}}
\DeclareMathOperator{\Image}{{\rm Im}}
\DeclareMathOperator{\module}{{\rm mod}}
\DeclareMathOperator{\Db}{\mathsf{D}^{\rm b}}
\DeclareMathOperator{\Kb}{\mathsf{K}^{\rm b}}
\DeclareMathOperator{\thick}{{\rm thick}}
\DeclareMathOperator{\twosilt}{{\rm 2\mathchar`-silt}}
\DeclareMathOperator{\twotilt}{{\rm 2\mathchar`-tilt}}
\title[Brauer tree algebras have $\binom{2n}{n}$ $2$-tilting complexes]{Brauer tree algebras have $\binom{2n}{n}$ $2$-tilting complexes}
\author{Toshitaka Aoki} 
\address{T. Aoki: Graduate School of Mathematics, Nagoya University, Chikusa-ku, Nagoya, 464-8602 Japan}
\email{m15001d@math.nagoya-u.ac.jp}
\begin{document}
\maketitle
\begin{abstract}
We show that any Brauer tree algebra has precisely $\binom{2n}{n}$ $2$-tilting complexes, where $n$ is the number of edges of the associated Brauer tree. 
More explicitly, for an external edge $e$ and an integer $j\neq0$, we show that the number of $2$-tilting complexes $T$ with $g_e(T)=j$ is $\binom{2n-|j|-1}{n-1}$, where $g_e(T)$ denotes the $e$-th of the $g$-vector of $T$.
To prove this, we use a geometric model of Brauer graph algebras on the closed oriented marked surfaces and a classification of $2$-tilting complexes due to Adachi-Aihara-Chan. 
\end{abstract}

\section{Introduction}

Special biserial algebras provide an important class of representation-tame algebras, and finite dimensional symmetric special biserial algebras are precisely Brauer graph algebras.
A Brauer graph algebra is defined from a combinatorial object called a Brauer graph, a finite connected graph equipped with a cyclic permutation of the edges incident to each vertex. 
It is also known as a ribbon graph (or a fat graph) and has a canonical embedding into a closed oriented marked surface \cite{Labourie13,MS14}. 
Recently, a similar construction of algebras from ribbon graphs has been developed in several area of mathematics, such as cluster theory \cite{ABCJP} and Fukaya categories of surfaces \cite{HKK,LP,KS}. 

Brauer graph algebras corresponding to plane trees are called Brauer tree algebras.  
Tilting theory of Brauer tree algebras plays a central role in the study of modular representation theory for cyclic groups. For example, it was first shown by Rickard \cite{Rickard89st} that the class of Brauer tree algebras is closed under derived equivalent. From a point of view of mutation theory, the categorical operation called tilting mutation can be described as a combinatorial operation called flip (or Kauer move) on the associated plane tree \cite{Aihara14,Kauer98}, and all tilting complexes are obtained by iterated mutation from the initial Brauer tree algebra \cite{Aihara13}. 
Two-term tilting complexes ($2$-tilting complexes for short) are closely related to several important notions in representation theory, such as torsion classes, $\tau$-tilting modules and $t$-structures \cite{AIR,IJY,KY}. 
In \cite{AAC}, they classify all $2$-tilting complexes over Brauer graph algebras by using the notion of signed walks and prove that there are only finitely many $2$-tilting complexes over Brauer tree algebras (see also \cite{AZ}).

In this paper, we prove the following result, which determines the number of $2$-tilting complexes over an arbitrary Brauer tree algebra. 
Let $\mathbf{G}$ be a Brauer tree and $G_1$ its edge set. 
We denote by $B_{\mathbf{G}}$ the Brauer tree algebra associated to $\mathbf{G}$, by $\twotilt B_{\mathbf{G}}$ the set of isomorphism classes of basic $2$-tilting complexes for $B_{\mathbf{G}}$. For $e\in G_1$ and $j\in \mathbb{Z}$, we set 
\[
   \twotilt_e^{j} (B_{\mathbf{G}}) :=\{T\in \twotilt B_{\mathbf{G}}\mid g_e(T)=j\}, 
\]
where $g(T)=(g_e(T))_{e\in G_1}$ denotes the $g$-vector of a two-term complex $T$ (see Definition \ref{def:g-vector}), and $[1,n]:=\{1,\ldots,n\}$.

\begin{theorem} (Theorem \ref{thm:main}) \label{theorem main}
   Let $\mathbf{G}$ be a Brauer tree and $B_{\mathbf{G}}$ the Brauer tree algebra associated to $\mathbf{G}$. 
   Let $n=|\mathbf{G}|$ be the number of edges of $\mathbf{G}$.
   \begin{enumerate}
   \item[\rm (1)] For any external edge $e$ of $\mathbf{G}$ and any non-negative integer $j$, we have 
   \[
      \# \twotilt_e^j (B_{\mathbf{G}}) =\# \twotilt_e^{-j} (B_{\mathbf{G}}) = \begin{cases} 
         \binom{2n-j-1}{n-1} & \text{if $j\in [1,n]$}, \\
         0 &\text{otherwise}.
         \end{cases}
   \]  
   \item[\rm (2)] The following equation holds:
   \[
      \#\twotilt B_{\mathbf{G}} = 2\sum_{j=1}^n  \binom{2n-j-1}{n-1} = \binom{2n}{n}.
   \]
   In particular, this number depends only on the number of edges of $\mathbf{G}$. 
   \end{enumerate}
\end{theorem}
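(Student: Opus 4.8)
The plan is to combine the classification of $2$-tilting complexes over Brauer graph algebras from \cite{AAC} with the geometric model of $B_{\mathbf{G}}$ realised on the closed oriented surface $\mathbf{S}_{\mathbf{G}}$ into which the ribbon graph $\mathbf{G}$ embeds; since $\mathbf{G}$ is a plane tree, $\mathbf{S}_{\mathbf{G}}$ is a sphere carrying $\mathbf{G}$. Under this dictionary a basic $2$-tilting complex $T$ is encoded by a collection of $n$ pairwise non-crossing curves on $\mathbf{S}_{\mathbf{G}}$ — one per indecomposable summand of $T$ (equivalently, the signed walks of \cite{AAC}) — subject to the ``$2$-term'' restriction that each such curve interacts with the arcs representing $B_{\mathbf{G}}$ itself in a tightly controlled way; it is precisely this restriction that makes the set finite. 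Moreover, for an edge $f$ the entry $g_f(T)$ of the $g$-vector is read off from this picture as a signed count of how the curves of $T$ cross the arc $\delta_f$ associated to $f$. The first step is to rewrite $g_e(T)=j$ in these terms; here the hypothesis that $e$ is \emph{external} is crucial, because then $\delta_e$ has an endpoint at a univalent vertex of $\mathbf{G}$, so the crossings of $T$ with $\delta_e$ all receive the same sign, $g_e(T)$ is simply their number, and one sees at once that $1\le|g_e(T)|\le n$ and that $g_e(T)\ne 0$ (some curve of $T$ must occupy the region at the univalent vertex, and hence cross $\delta_e$); this already accounts for the vanishing in the ranges $j=0$ and $|j|>n$.

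For the enumeration in part (1), fix $j\in[1,n]$. The plan is to construct a bijection between $\twotilt_e^{j}(B_{\mathbf{G}})$ and a set of lattice paths — equivalently, of binary words with $n-1$ letters of one kind among $2n-j-1$ letters — which has cardinality $\binom{2n-j-1}{n-1}$. One traverses the boundary of the thickened tree $\mathbf{G}$ once and records, as the $n$ edges are met in this cyclic order, the local behaviour of the curves of $T$ (their direction and where they meet $\delta_e$) at each edge; the resulting word determines $T$, the hypothesis $g_e(T)=j$ fixes exactly one ``height'' statistic of it, and the remaining entries range freely, which gives the binomial count. This last verification is most naturally organised as an induction on $n$: the base case $n=1$ is the local symmetric algebra, where $\twotilt B_{\mathbf{G}}=\{B_{\mathbf{G}},\,B_{\mathbf{G}}[1]\}$ and so $\#\twotilt_e^{1}=\#\twotilt_e^{-1}=1=\binom{0}{0}$; the inductive step deletes the external edge $e$ to pass to a smaller Brauer tree and is closed up by a hockey-stick identity. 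Finally the equality $\#\twotilt_e^{j}(B_{\mathbf{G}})=\#\twotilt_e^{-j}(B_{\mathbf{G}})$ is witnessed by the sign-reversing involution on the signed walks, which negates every $g$-vector. This proves (1).

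Part (2) then follows from (1) by a short binomial computation: putting $k=2n-j-1$ and using the hockey-stick identity,
\[
   \sum_{j=1}^{n}\binom{2n-j-1}{n-1}
   =\sum_{k=n-1}^{2n-2}\binom{k}{n-1}
   =\binom{2n-1}{n},
\]
so that $\#\twotilt B_{\mathbf{G}}=\sum_{j\in\mathbb{Z}}\#\twotilt_e^{j}(B_{\mathbf{G}})=2\binom{2n-1}{n}=\binom{2n}{n}$, the last step because $\binom{2n-1}{n}=\binom{2n-1}{n-1}$; the independence of this number from the shape of $\mathbf{G}$ and from its multiplicities is then automatic. The main obstacle is the heart of the second paragraph: pinning down exactly how the directions/gradings of the summands of $T$ and the winding of the associated signed walks around the sphere assemble into the single statistic $g_e(T)$, and then proving that after fixing $g_e(T)=j$ the surviving combinatorial data run over a set of size precisely $\binom{2n-j-1}{n-1}$ — this is where the tree structure of $\mathbf{G}$ and the deletion of the external edge do the real work.
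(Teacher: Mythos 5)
Your general framework is right—arcs on the sphere, $g$-vectors read off intersections, $|g_e|\le 1$ per arc at an external edge, induction on $n$ plus the hockey-stick sum for part (2)—and matches the paper's opening moves. But the crux of part (1), which you flag as "the main obstacle," is left as a plan, and the plan as stated is not the one that works. You propose to delete the external edge $e$, drop to a Brauer tree on $n-1$ edges, and close by a hockey-stick identity. The paper does not delete $e$: it passes to the \emph{adjacent} edge $f=\sigma_b(e)$ and splits $\mathbf{G}$ along $f$ into two subtrees $\mathbf{G}^b,\mathbf{G}^c$ (Section \ref{sec:glue}, Theorem \ref{thm:gluing}), with $\mathbf{G}^c$ still containing $e$. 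Complete arc collections on $\mathbf{G}$ are then built from pairs of complete collections on $\mathbf{G}^b,\mathbf{G}^c$ glued along $f$ via a lattice path, and $\mathcal{A}(\mathbf{G})_e^{>0}$ decomposes as a disjoint union of \emph{three} families indexed by the local type of the collection near $(e,f)$ (Lemma \ref{lem:Gc}). The resulting count is a sum $A_{q-1}(j)+B_{q-1}(j)+C_{q-1}(j)$ of double and triple sums of products of binomials and lattice-path counts (Lemma \ref{lem:1}), and proving this equals $\binom{2n-j-1}{n-1}$ is a genuine identity requiring its own induction (Lemma \ref{lem:2}); a single hockey-stick step does not suffice. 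A direct bijection "$T\mapsto$ a binary word of length $2n-j-1$ with $n-1$ marked positions," obtained by walking the boundary of the thickened tree, is not constructed in the paper and you do not construct it either—there is no obvious such traversal statistic, because arcs not through $e$ also constrain how many arcs may pass through $e$.

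A second, smaller gap: the equality $\#\twotilt_e^{j}=\#\twotilt_e^{-j}$ is not witnessed by a sign-reversing involution on the \emph{same} Brauer tree. Reversing all signs of a signed walk is an involution on arcs, but it does not preserve non-crossing: two arcs that are disjoint can become crossing after sign reversal. What sign reversal actually gives is a bijection $\mathcal{A}(\mathbf{G})_e^{j}\xrightarrow{\ \sim\ }\mathcal{A}(\mathbf{G}^{\rm op})_e^{-j}$ to the opposite plane tree (Proposition \ref{prop:opp}). The paper therefore proves the formula $\#\mathcal{A}(\mathbf{G})_e^{j}=\binom{2n-j-1}{n-1}$ simultaneously for $\mathbf{G}$ and $\mathbf{G}^{\rm op}$ by the same induction and then deduces $\#\mathcal{A}(\mathbf{G})_e^{-j}=\#\mathcal{A}(\mathbf{G}^{\rm op})_e^{j}=\binom{2n-j-1}{n-1}$. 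Your involution, taken literally, would prove a false statement (that any admissible collection remains admissible after sign reversal), so this step needs the opposite-tree detour.
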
 

Each edge $e\in G_1$ determines subtrees $\mathbf{S},\mathbf{T}$ of $\mathbf{G}$ satisfying $S_1\cup T_1 = G_1$ and $S_1\cap T_1=\{e\}$. 
We regard both of them as plane trees canonically. See the following figure. 
We study $2$-tilting complexes for $B_{\mathbf{G}}$ from those for Brauer tree subalgebras $B_{\mathbf{S}}$ and $B_{\mathbf{T}}$. 
The following result plays a key role in our proof of Theorem \ref{theorem main}. 
Now, we denote by $\mathrm{P}(s,t)$ the set of all lattice paths in the lattice $[1,s]\times[1,t]$ (see Definition \ref{def:latticepath}).

\[
\begin{tabular}{ccccccccc}
   \small$\mathbf{G}$ & &\small$\mathbf{S}$ & \small$\mathbf{T}$ \\
   \begin{tikzpicture}[scale=0.85,baseline=0mm]
      \coordinate(a) at(0.6,0);
      \coordinate(b) at(-0.6,0);

      \draw (a)--node[fill=white,inner sep=1]{\footnotesize$e$}(b); 
      \draw (a)--($(a)+(45:0.8)$) (a)--($(a)+(0:0.8)$) (a)--($(a)+(-90:0.8)$);
      \draw (b)--($(b)+(120:0.8)$) (b)--($(b)+(180:0.8)$) (b)--($(b)+(-120:0.8)$);
      \draw[fill=white] (a)circle(0.6mm) ;
      \draw[fill=white] (b)circle(0.6mm) ;
      

      \node(xx) at (0,-1) {};
      \node(xx) at (0,1) {};
      
   \end{tikzpicture}
   &
\begin{tikzpicture}[scale=0.85,baseline=0mm]
   \node(sq) at (0,0) {$\rightsquigarrow$};
   \node(xx) at (0,-1) {};
      \node(xx) at (0,1) {};
\end{tikzpicture}
   &
   \begin{tikzpicture}[scale=0.85,baseline=0mm]
      \coordinate(bl) at (0.2,0.6);
      \coordinate(a) at(0.6,0);
      \coordinate(b) at(-0.6,0);
      \draw (a)--node[fill=white,inner sep=1]{\footnotesize$e$}(b); 
      \draw (b)--($(b)+(120:0.8)$) (b)--($(b)+(180:0.8)$) (b)--($(b)+(-120:0.8)$);
      
      \draw[fill=white] (a)circle(0.6mm) ;
      \draw[fill=white] (b)circle(0.6mm) ;
      \node(xx) at (0,-1) {};
      \node(xx) at (0,1) {};
   
   \end{tikzpicture}
   & 
   \begin{tikzpicture}[scale=0.85,baseline=0mm]
      \coordinate(bl) at (-0.2,0.6);

      \coordinate(a) at(0.6,0);
      \coordinate(b) at(-0.6,0);
      \draw (a)--node[fill=white,inner sep=1]{\footnotesize$e$}(b); 
      \draw (a)--($(a)+(45:0.8)$) (a)--($(a)+(0:0.8)$) (a)--($(a)+(-90:0.8)$);
      
      \draw[fill=white] (a)circle(0.6mm) ;
      \draw[fill=white] (b)circle(0.6mm) ;
      \node(xx) at (0,-1) {};
      \node(xx) at (0,1) {};
   
   \end{tikzpicture}
\end{tabular}
\]

\begin{theorem} (Theorem \ref{thm:gluing}) \label{theorem sub}
   In the above, for each $s \in [1,|\mathbf{S}|]$ and $t\in [1,|\mathbf{T}|]$, we have injective maps 
      \begin{eqnarray}
         \rho_{e}^{s,t} \colon& \twotilt_e^{s} (B_{\mathbf{S}}) \times \twotilt_{e}^t (B_{\mathbf{T}}) \times \mathrm{P}(s,t) &\to\ \twotilt_e^{s+t-1} (B_{\mathbf{G}}) \quad \text{and}\\
         \rho_{e}^{-s,-t} \colon& \twotilt_e^{-s} (B_{\mathbf{S}}) \times \twotilt_{e}^{-t} (B_{\mathbf{T}}) \times \mathrm{P}(s,t) &\to\ \twotilt_e^{-s-t+1} (B_{\mathbf{G}}). 
      \end{eqnarray}
   Furthermore, for each integer $j\in [1,|\mathbf{G}|]$, they provide decompositions
   \[
      \twotilt_e^j (B_{\mathbf{G}}) = \bigsqcup_{\substack{s\in [1,|\mathbf{S}|] \\ t\in [1,|\mathbf{T}|]\\ j=s+t-1}} \Image \rho_e^{s,t} \quad \text{and} \quad \twotilt_e^{-j} (B_{\mathbf{G}}) = \bigsqcup_{\substack{s\in [1,|\mathbf{S}|] \\ t \in [1,|\mathbf{T}|]\\ j=s+t-1}} \Image \rho_e^{-s,-t}.
   \]
\end{theorem}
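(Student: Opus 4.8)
The plan is to exploit the geometric model of Adachi–Aihara–Chan, in which $2$-tilting complexes over a Brauer graph algebra are encoded by (homotopy classes of) admissible signed curves or signed walks on the ribbon surface associated to $\mathbf{G}$, and the $g$-vector component $g_e(T)$ is read off as the signed intersection number of the curve with the arc dual to the edge $e$. Cutting the surface of $\mathbf{G}$ along the arc dual to the external edge $e$ decomposes it into the subsurfaces associated to $\mathbf{S}$ and $\mathbf{T}$, glued along the boundary segment coming from $e$. So the first step is to set up this cutting/gluing dictionary precisely: given a signed walk $w$ on $\mathbf{G}$ realizing a $2$-tilting complex $T$ with $g_e(T)=s+t-1>0$, the walk crosses the $e$-arc exactly $s+t-1$ times, all with the same sign; slicing at each crossing breaks $w$ into pieces, each lying entirely in the $\mathbf{S}$-part or the $\mathbf{T}$-part, and these pieces recombine (after closing them up across $e$ in the respective subsurfaces) into a signed walk $w_S$ on $\mathbf{S}$ with $g_e=s$ and a signed walk $w_T$ on $\mathbf{T}$ with $g_e=t$. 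Conversely, given $w_S$ with $s$ crossings and $w_T$ with $t$ crossings, reassembling them into a single walk on $\mathbf{G}$ requires choosing how to interleave the $s$ "$S$-returns" and the $t$ "$T$-returns" along the glued $e$-segment — and this interleaving datum is exactly a monotone lattice path in $[1,s]\times[1,t]$, i.e. an element of $\mathrm{P}(s,t)$. That correspondence is the content of the maps $\rho_e^{s,t}$, and the negative case is handled identically with all crossing signs reversed.

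The second step is to verify that $\rho_e^{s,t}$ is well-defined and injective. Well-definedness means: (a) the reassembled object is again an admissible signed walk (no forbidden backtracking at the $e$-segment, correct sign pattern, correct grading), which follows because $e$ is an \emph{external} edge — one of its endpoints has valency one, so the local ribbon structure at the gluing locus is rigid and leaves no room for the reassembly to create an inadmissible configuration; and (b) the resulting $g$-vector component at $e$ is $s+t-1$, since we merge $s+t$ crossings but $\mathrm{P}(s,t)$-interleaving identifies one crossing of $w_S$ with one of $w_T$ (geometrically, the two "outermost" strands on either side of $e$ get joined into a single strand that crosses $e$ once), so the count drops by one. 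Injectivity: from the glued walk $w$ on $\mathbf{G}$ one recovers $w_S$, $w_T$ and the lattice path by the cutting procedure of Step 1, which is a genuine inverse on the image; one must check the cut is canonical, i.e. independent of choices, again using that $e$ is external so the two sides are unambiguously distinguished.

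The third step establishes the decomposition: every $T\in\twotilt_e^{j}(B_{\mathbf{G}})$ with $j\ge 1$ lies in the image of some $\rho_e^{s,t}$ with $s+t-1=j$, $s\in[1,|\mathbf{S}|]$, $t\in[1,|\mathbf{T}|]$. This is just the assertion that the cutting procedure always succeeds: a walk on $\mathbf{G}$ with a positive number of $e$-crossings, when cut at $e$, really does fall into $\mathbf{S}$-pieces and $\mathbf{T}$-pieces each of which closes up to an \emph{honest} $2$-tilting walk on the respective subalgebra (not merely a presilting one); here one invokes that $\mathbf{S}$ and $\mathbf{T}$ are themselves Brauer trees with $e$ external in each, so the induced combinatorial data is of the expected type, and the AAC classification applies on each side. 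Finally, the images for distinct pairs $(s,t)$ are disjoint because $s$ (resp. $t$) is intrinsically the number of $e$-crossings of $w_S$ (resp. $w_T$), hence recoverable from $w$.

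The main obstacle I expect is Step 2(a)/(b) together with the disjointness claim: making the cut-and-glue operation \emph{canonical} and checking that it exactly drops the crossing count by one while preserving admissibility and the grading/sign data. This is where the hypothesis that $e$ is an external edge is essential — for an internal edge the two "sides" need not be globally separated by the $e$-arc and the interleaving at the gluing locus would interact nontrivially with the cyclic orderings at \emph{both} endpoints of $e$, so the clean bijection with $\mathrm{P}(s,t)$ would break down. Carrying the bookkeeping of signs and the two-term grading through the surgery, rather than the underlying-curve combinatorics, is the technically delicate part.
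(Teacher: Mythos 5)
Your conceptual route matches the paper's: pass through the geometric model (arcs/signed walks with $g_e$ read off as signed intersection with the dual arc $\alpha_e$), cut the surface of $\mathbf{G}$ along $\alpha_e$ into the surfaces of $\mathbf{S}$ and $\mathbf{T}$, and parametrize the reassemblies by monotone lattice paths in $[1,s]\times[1,t]$. That is exactly what the paper does in Section~\ref{sec:glue}: the gluing $\mathcal{X}^a\underset{e_P}{\times}\mathcal{X}^b$ of Proposition~\ref{prop:gluepath}, the count $\#\mathcal{X}^a\underset{e_P}{\times}\mathcal{X}^b=|\mathbf{S}|+|\mathbf{T}|-1=|\mathbf{G}|$, and the cut-and-reconstruct argument in the proof of Theorem~\ref{thm:gluing}.

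However, your closing paragraph contains a genuine error about what hypothesis is being used. You assert that ``the hypothesis that $e$ is an external edge is essential'' and that for an internal edge ``the two sides need not be globally separated by the $e$-arc.'' Theorem~\ref{theorem sub} (= Theorem~\ref{thm:gluing}) makes \emph{no} externality assumption on $e$ in $\mathbf{G}$: it holds for every edge $e\in G_1$. In a tree every edge is a bridge, so removing $e$ always disconnects $\mathbf{G}$ into the two subtrees $\mathbf{S},\mathbf{T}$ with $S_1\cap T_1=\{e\}$; there is never a failure of ``global separation.'' What the proof actually uses is that $e$ is \emph{automatically} an external edge of each of $\mathbf{S}$ and $\mathbf{T}$ (one endpoint of $e$ has valency $1$ inside $\mathbf{S}$, the other inside $\mathbf{T}$). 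This is what makes the cut-and-glue locus a monogon $\triangle^v_v$ on each side (and lets Lemma~\ref{lem:external} control all crossings of $\alpha_e$). This distinction is not cosmetic: in the paper's proof of Theorem~\ref{thm:main} the gluing theorem is applied at the edge $f=\sigma_b(e)$ (Lemma~\ref{lem:1}), which is typically internal in $\mathbf{G}$. If Theorem~\ref{thm:gluing} only held for external edges, the induction in Section~\ref{sec:proof} would not close.

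Two smaller points. First, your heuristic that ``we merge $s+t$ crossings but one gets identified, so the count drops by one'' only describes the extreme lattice paths; for a general $P$ the construction duplicates $\gamma^a_j$ into $P(j,-)$ parallel copies and $\gamma^b_k$ into $P(-,k)$ copies before gluing naturally (this is the $\mathcal{X}^{a(P)},\mathcal{X}^{b(P)}$ step in the proof of Proposition~\ref{prop:gluepath}), yielding $|P|=s+t-1$ glued arcs each crossing $\alpha_e$ once. Second, injectivity and disjointness of the images in the paper come for free from the fact that a $2$-presilting complex (hence a reduced collection of arcs) is determined by its $g$-vector (Proposition~\ref{prop:g-inv} / Lemma~\ref{lem:g-invarc}); your plan hints at recovering the data by cutting but does not invoke this uniqueness, which is the clean way to see that distinct triples $(\mathcal{X},\mathcal{Y},P)$ cannot collide.
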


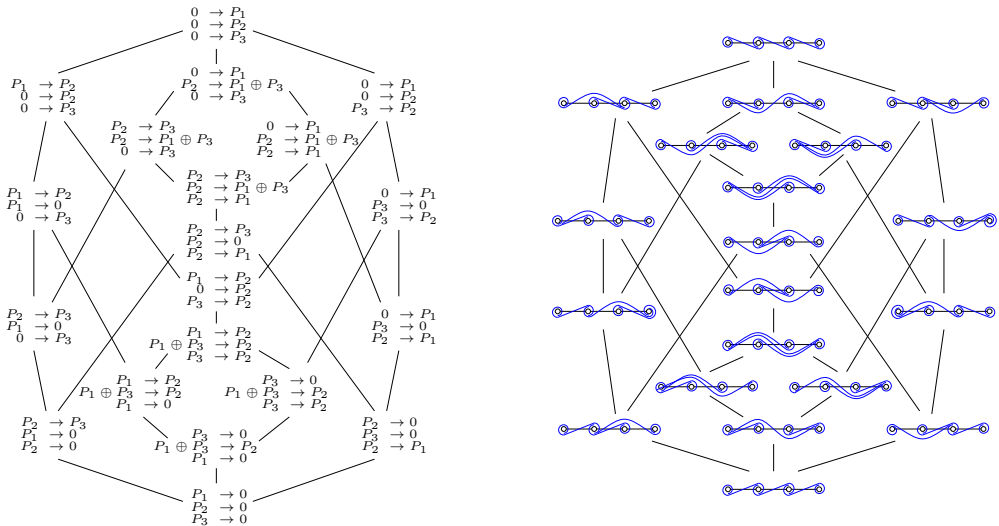
\begin{figure}[b]  
   \scalebox{0.8}{
   \begin{tabular}{ccccccccc}
   \begin{tikzpicture}[baseline=0]
      
      \coordinate(x) at (0,0.4);
      \node(1) at($(x)+(0,0.2)$) {\tiny$\ P_2\ \to P_3$};
      \node(2) at($(x)+(0,0)$) {\tiny$\ P_2\ \to 0 \ \  $};
      \node(3) at($(x)+(0,-0.2)$) {\tiny$\  P_2\ \to P_1$};
      \coordinate(ul) at ($(x)+(-0.6,0.35)$);
      \coordinate(dr) at ($(x)+(0.6,-0.35)$);
      \draw ($(x)+(0,0.35)$)--($(x)+(0,0.55)$);
      
      \coordinate(x) at (0,1.3);
      \node(1) at($(x)+(0,0.2)$) {\tiny$\ P_2\ \to P_3$};
      \node(2) at($(x)+(0,0)$) {\tiny$\ \quad \quad \ P_2\ \to P_1\oplus P_3$};
      \node(3) at($(x)+(0,-0.2)$) {\tiny$\  P_2\ \to P_1$};
      \coordinate(ul) at ($(x)+(-0.6,0.35)$);
      \coordinate(dr) at ($(x)+(0.6,-0.35)$);
      \draw ($(x)+(-0.7,0.1)$)--($(x)+(-1,0.4)$);
      \draw ($(x)+(1.2,0.1)$)--($(x)+(1.5,0.4)$);
      \coordinate(x) at (-1.2,2.1);
      \node(1) at($(x)+(0,0.2)$) {\tiny$\ P_2\ \to P_3 \  $};
      \node(2) at($(x)+(0,0)$) {\tiny$\ \quad \quad P_2\ \to P_1\oplus P_3$};
      \node(3) at($(x)+(0,-0.2)$) {\tiny$\ \  0\ \to P_3$};
      \coordinate(ul) at ($(x)+(-0.6,0.35)$);
      \coordinate(dr) at ($(x)+(0.6,-0.35)$);
      \coordinate(x) at (1.2,2.1);
      \node(1) at($(x)+(0,0.2)$) {\tiny$\ \ 0\ \to P_1$};
      \node(2) at($(x)+(0,0)$) {\tiny$\ \quad \quad P_2\ \to P_1\oplus P_3$};
      \node(3) at($(x)+(0,-0.2)$) {\tiny$\  P_2\ \to P_1 \ $};
      \coordinate(ul) at ($(x)+(-0.6,0.35)$);
      \coordinate(dr) at ($(x)+(0.6,-0.35)$);
      \coordinate(x) at (0,3);
      \node(1) at($(x)+(0,0.2)$) {\tiny$\ 0\ \to P_1$};
      \node(2) at($(x)+(0,0)$) {\tiny$\ \quad \ \ P_2\ \to P_1\oplus P_3$};
      \node(3) at($(x)+(0,-0.2)$) {\tiny$\  0\ \to P_3$};
      \coordinate(ul) at ($(x)+(-0.6,0.35)$);
      \coordinate(dr) at ($(x)+(0.6,-0.35)$);
      \draw ($(x)+(-0.7,-0.1)$)--($(x)+(-1,-0.5)$);
      \draw ($(x)+(1.2,-0.1)$)--($(x)+(1.5,-0.5)$);
      \draw ($(x)+(0,0.35)$)--($(x)+(0,0.6)$);
      \coordinate(x) at (0,4);
      \node(1) at($(x)+(0,0.2)$) {\tiny$\ 0\ \to P_1$};
      \node(2) at($(x)+(0,0)$) {\tiny$\ 0\ \to P_2$};
      \node(3) at($(x)+(0,-0.2)$) {\tiny$\  0\ \to P_3$};
      \coordinate(ul) at ($(x)+(-0.6,0.35)$);
      \coordinate(dr) at ($(x)+(0.6,-0.35)$);
      \draw ($(x)+(-0.55,-0.1)$)--($(x)+(-2.6,-0.8)$);
      \draw ($(x)+(0.6,-0.1)$)--($(x)+(2.6,-0.8)$);
      \coordinate(x) at (-3,1);
      \node(1) at($(x)+(0,0.2)$) {\tiny$\ \ P_1\ \to P_2 $};
      \node(2) at($(x)+(0,0)$) {\tiny$\ P_1\ \to 0\ $};
      \node(3) at($(x)+(0,-0.2)$) {\tiny$\ \quad 0\ \to P_3$};
      \coordinate(ul) at ($(x)+(-0.6,0.35)$);
      \coordinate(dr) at ($(x)+(0.6,-0.35)$);
      \draw ($(x)+(0,0.4)$)--($(x)+(0.2,1.4)$);
      \draw ($(x)+(0,-0.4)$)--($(x)+(0,-1.5)$);
      \draw ($(x)+(0.3,-0.4)$)--($(x)+(1.5,-2.6)$);

      \coordinate(x) at (3,1);
      \node(1) at($(x)+(0,0.2)$) {\tiny$\ \quad 0\ \to P_1$};
      \node(2) at($(x)+(0,0)$) {\tiny$\ P_3\ \to 0\ $};
      \node(3) at($(x)+(0,-0.2)$) {\tiny$\ \ P_3\ \to P_2$};
      \coordinate(ul) at ($(x)+(-0.6,0.35)$);
      \coordinate(dr) at ($(x)+(0.6,-0.35)$);
      \draw ($(x)+(0,0.4)$)--($(x)+(-0.2,1.4)$);
      \draw ($(x)+(0,-0.4)$)--($(x)+(0,-1.5)$);
      \draw ($(x)+(-0.3,-0.4)$)--($(x)+(-1.5,-2.6)$);
      
      \coordinate(x) at (-2.8,2.8);
      \node(1) at($(x)+(0,0.2)$) {\tiny$\ P_1\ \to P_2 \ \ $};
      \node(2) at($(x)+(0,0)$) {\tiny$\ 0\ \to P_2$};
      \node(3) at($(x)+(0,-0.2)$) {\tiny$\  0\ \to P_3$};
      \coordinate(ul) at ($(x)+(-0.6,0.35)$);
      \coordinate(dr) at ($(x)+(0.6,-0.35)$);
      \draw ($(x)+(0.3,-0.4)$)--($(x)+(2.2,-3)$);
      \coordinate(x) at (2.8,2.8);
      \node(1) at($(x)+(0,0.2)$) {\tiny$\ 0\ \to P_1$};
      \node(2) at($(x)+(0,0)$) {\tiny$\ 0\ \to P_2$};
      \node(3) at($(x)+(0,-0.2)$) {\tiny$\  P_3\ \to P_2 \ \ $};
      \coordinate(ul) at ($(x)+(-0.6,0.35)$);
      \coordinate(dr) at ($(x)+(0.6,-0.35)$);
      \draw ($(x)+(-0.15,-0.4)$)--($(x)+(-2.1,-3)$);
      \coordinate(x) at (0,-0.4);
      \node(1) at($(x)+(0,0.2)$) {\tiny$\ P_1\ \to P_2$};
      \node(2) at($(x)+(0,0)$) {\tiny$\ \ \ 0\ \to P_2$};
      \node(3) at($(x)+(0,-0.2)$) {\tiny$\  P_3\ \to P_2$};
      \coordinate(ul) at ($(x)+(-0.6,0.35)$);
      \coordinate(dr) at ($(x)+(0.6,-0.35)$);
      \draw ($(x)+(0,-0.35)$)--($(x)+(0,-0.55)$);
      \coordinate(x) at (0,-1.3);
      \node(1) at($(x)+(0,0.2)$) {\tiny$\ P_1\ \to P_2$};
      \node(2) at($(x)+(0,0)$) {\tiny$\ P_1\oplus P_3\ \to P_2 \quad \quad \ $};
      \node(3) at($(x)+(0,-0.2)$) {\tiny$\  P_3\ \to P_2$};
      \coordinate(ul) at ($(x)+(-0.6,0.35)$);
      \coordinate(dr) at ($(x)+(0.6,-0.35)$);
      \draw ($(x)+(-0.8,-0.15)$)--($(x)+(-1,-0.4)$);
      \draw ($(x)+(0.7,-0.1)$)--($(x)+(1.2,-0.4)$);
      \coordinate(x) at (-1.2,-2.1);
      \node(1) at($(x)+(0,0.2)$) {\tiny$\ \ P_1\ \to P_2$};
      \node(2) at($(x)+(0,0)$) {\tiny$\ P_1\oplus P_3\ \to P_2 \quad \quad $};
      \node(3) at($(x)+(0,-0.2)$) {\tiny$\  P_1\ \to 0\ $};
      \coordinate(ul) at ($(x)+(-0.6,0.35)$);
      \coordinate(dr) at ($(x)+(0.6,-0.35)$);
      \coordinate(x) at (1.2,-2.1);
      \node(1) at($(x)+(0,0.2)$) {\tiny$\ P_3\ \to 0\ $};
      \node(2) at($(x)+(0,0)$) {\tiny$\ P_1\oplus P_3\ \to P_2 \quad \quad $};
      \node(3) at($(x)+(0,-0.2)$) {\tiny$\ \ P_3\ \to P_2$};
      \coordinate(ul) at ($(x)+(-0.6,0.35)$);
      \coordinate(dr) at ($(x)+(0.6,-0.35)$);
      \coordinate(x) at (0,-3);
      \node(1) at($(x)+(0,0.2)$) {\tiny$\ P_3\ \to 0$};
      \node(2) at($(x)+(0,0)$) {\tiny$\ P_1\oplus P_3\ \to P_2 \quad \ \ $};
      \node(3) at($(x)+(0,-0.2)$) {\tiny$\  P_1\ \to 0$};
      \coordinate(ul) at ($(x)+(-0.6,0.35)$);
      \coordinate(dr) at ($(x)+(0.6,-0.35)$);
      \draw ($(x)+(-0.8,0.15)$)--($(x)+(-1.2,0.5)$);
      \draw ($(x)+(0.7,0.1)$)--($(x)+(1.2,0.5)$);
      \draw ($(x)+(0,-0.35)$)--($(x)+(0,-0.6)$);
      \coordinate(x) at (0,-4);
      \node(1) at($(x)+(0,0.2)$) {\tiny$\ P_1\ \to 0$};
      \node(2) at($(x)+(0,0)$) {\tiny$\ P_2\ \to 0$};
      \node(3) at($(x)+(0,-0.2)$) {\tiny$\  P_3\ \to 0$};
      \coordinate(ul) at ($(x)+(-0.6,0.35)$);
      \coordinate(dr) at ($(x)+(0.6,-0.35)$);
      \draw ($(x)+(-0.55,0.1)$)--($(x)+(-2.6,0.8)$);
      \draw ($(x)+(0.6,0.1)$)--($(x)+(2.6,0.8)$);
      \coordinate(x) at (-3,-1);
      \node(1) at($(x)+(0,0.2)$) {\tiny$\ \ P_2\ \to P_3$};
      \node(2) at($(x)+(0,0)$) {\tiny$\ P_1\ \to 0\ $};
      \node(3) at($(x)+(0,-0.2)$) {\tiny$\ \quad 0\ \to P_3$};
      \coordinate(ul) at ($(x)+(-0.6,0.35)$);
      \coordinate(dr) at ($(x)+(0.6,-0.35)$);
      \draw ($(x)+(0,-0.4)$)--($(x)+(0.2,-1.4)$);
      \draw ($(x)+(0.3,0.4)$)--($(x)+(1.5,2.6)$);
      \coordinate(x) at (3,-1);
      \node(1) at($(x)+(0,0.2)$) {\tiny$\ \quad 0\ \to P_1$};
      \node(2) at($(x)+(0,0)$) {\tiny$\ P_3\ \to 0\ $};
      \node(3) at($(x)+(0,-0.2)$) {\tiny$\  \ P_2\ \to P_1$};
      \coordinate(ul) at ($(x)+(-0.6,0.35)$);
      \coordinate(dr) at ($(x)+(0.6,-0.35)$);
      \draw ($(x)+(0,-0.4)$)--($(x)+(-0.2,-1.4)$);
      \draw ($(x)+(-0.3,0.4)$)--($(x)+(-1.2,2.7)$);
      \coordinate(x) at (-2.8,-2.8);
      \node(1) at($(x)+(0,0.2)$) {\tiny$\ \ \  P_2\ \to P_3$};
      \node(2) at($(x)+(0,0)$) {\tiny$\ P_1\ \to 0$};
      \node(3) at($(x)+(0,-0.2)$) {\tiny$\  P_2\ \to 0$};
      \coordinate(ul) at ($(x)+(-0.6,0.35)$);
      \coordinate(dr) at ($(x)+(0.6,-0.35)$);
      \draw ($(x)+(0.2,0.4)$)--($(x)+(2.1,3)$);
      \coordinate(x) at (2.8,-2.8);
      \node(1) at($(x)+(0,0.2)$) {\tiny$\ P_2\ \to 0$};
      \node(2) at($(x)+(0,0)$) {\tiny$\ P_3\ \to 0$};
      \node(3) at($(x)+(0,-0.2)$) {\tiny$\ \ \ P_2\ \to P_1 $};
      \coordinate(ul) at ($(x)+(-0.6,0.35)$);
      \coordinate(dr) at ($(x)+(0.6,-0.35)$);
      \draw ($(x)+(-0.2,0.4)$)--($(x)+(-2.1,3)$);
   \end{tikzpicture}
   & \quad \quad \quad \quad 
   \begin{tikzpicture}[baseline=0]
      \coordinate (x) at(0,0.4);
      \coordinate(a) at ($(x)+(0,0)$);
      \coordinate(b) at ($(x)+(0.5,0)$);
      \coordinate(c) at ($(x)+(1,0)$);
      \coordinate(d) at ($(x)+(1.5,0)$);
      \draw (a)--(b)--(c)--(d);
      \draw[fill=white](a)circle(0.4mm);
      \draw[fill=white](b)circle(0.4mm);
      \draw[fill=white](c)circle(0.4mm);
      \draw[fill=white](d)circle(0.4mm);

      \draw[blue] ($(d)+(-0.06,-0.07)$)..controls($(c)+(0,0.28)$)..($(b)+(0.06,-0.06)$);
      \draw[blue] ($(d)+(-0.06,-0.07)$)arc(-120:90:0.85mm)arc(90:270:0.7mm);

      \draw[blue] ($(a)+(0.03,0.08)$)..controls($(b)+(0,-0.28)$)..($(c)+(-0.06,0.06)$);
      \draw[blue] ($(a)+(0.03,0.08)$)arc(80:270:0.85mm)arc(270:450:0.7mm);

      \draw[blue]($(b)+(0,-0.1)$)--($(c)+(0,0.1)$);
      \draw[blue]($(c)+(0,0.1)$)arc(90:-90:0.85mm)arc(-90:-270:0.7mm);
      \draw[blue]($(b)+(0,-0.1)$)arc(-90:-270:0.85mm)arc(-270:-450:0.7mm);
   
      \draw ($(x)+(0.75,0.25)$)--($(x)+(0.75,0.65)$);
      
      \coordinate (x) at(0,1.3);
      \coordinate(a) at ($(x)+(0,0)$);
      \coordinate(b) at ($(x)+(0.5,0)$);
      \coordinate(c) at ($(x)+(1,0)$);
      \coordinate(d) at ($(x)+(1.5,0)$);
      \draw (a)--(b)--(c)--(d);
      \draw[fill=white](a)circle(0.4mm);
      \draw[fill=white](b)circle(0.4mm);
      \draw[fill=white](c)circle(0.4mm);
      \draw[fill=white](d)circle(0.4mm);

      \draw[blue] ($(d)+(0,-0.1)$)arc(-90:90:0.85mm)arc(90:270:0.7mm);

      \draw[blue] ($(d)+(-0.07,-0.03)$)..controls($(c)+(0,0.28)$)..($(b)+(0.04,-0.08)$)arc(120+180:-90+180:0.85mm)arc(-90+180:-270+180:0.7mm);
   
      \draw[blue] ($(a)+(0.07,0.03)$)..controls($(b)+(0,-0.28)$)..($(c)+(-0.04,0.08)$)arc(120:-90:0.85mm)arc(-90:-270:0.7mm);
   
      \draw[blue] ($(d)+(0,-0.1)$)..controls($(c)+(0,0.22)$)..($(b)!0.5!(c)$);
      \draw[blue] ($(a)+(0,0.1)$)..controls($(b)+(0,-0.22)$)..($(b)!0.5!(c)$);
      \draw[blue] ($(a)+(0,0.1)$)arc(-90+180:90+180:0.85mm)arc(90+180:270+180:0.7mm);

      \draw ($(x)+(0.25,0.2)$)--($(x)+(-0.3,0.55)$);
      \draw ($(x)+(1.5,0.2)$)--($(x)+(1.9,0.55)$);
      \coordinate (x) at(-1.1,2);
      \coordinate(a) at ($(x)+(0,0)$);
      \coordinate(b) at ($(x)+(0.5,0)$);
      \coordinate(c) at ($(x)+(1,0)$);
      \coordinate(d) at ($(x)+(1.5,0)$);
      \draw (a)--(b)--(c)--(d);
      \draw[fill=white](a)circle(0.4mm);
      \draw[fill=white](b)circle(0.4mm);
      \draw[fill=white](c)circle(0.4mm);
      \draw[fill=white](d)circle(0.4mm);
   
      \draw ($(x)+(0.75,0.2)$)--($(x)+(1.3,0.55)$);

      \draw[blue]($(c)+(0,0.1)$)--($(d)+(0,-0.1)$);
      \draw[blue]($(d)+(0,-0.1)$)arc(90+180:270+180:0.85mm)arc(270+180:450+180:0.7mm);
      \draw[blue]($(c)+(0,0.1)$)arc(-90+180:90+180:0.85mm)arc(90+180:270+180:0.7mm);
   
      \draw[blue] ($(d)+(-0.07,-0.05)$)..controls($(c)+(0,0.22)$)..($(b)!0.5!(c)$);
      \draw[blue] ($(a)+(0,0.1)$)..controls($(b)+(0,-0.22)$)..($(b)!0.5!(c)$);
      \draw[blue] ($(a)+(0,0.1)$)arc(-90+180:90+180:0.85mm)arc(90+180:270+180:0.7mm);
   
      \draw[blue] ($(d)+(-0.07,-0.03)$)..controls($(c)+(0,0.28)$)..($(b)+(0.04,-0.08)$)arc(120+180:-90+180:0.85mm)arc(-90+180:-270+180:0.7mm);

      \coordinate (x) at(1.1,2);
      \coordinate(a) at ($(x)+(0,0)$);
      \coordinate(b) at ($(x)+(0.5,0)$);
      \coordinate(c) at ($(x)+(1,0)$);
      \coordinate(d) at ($(x)+(1.5,0)$);
      \draw (a)--(b)--(c)--(d);
      \draw[fill=white](a)circle(0.4mm);
      \draw[fill=white](b)circle(0.4mm);
      \draw[fill=white](c)circle(0.4mm);
      \draw[fill=white](d)circle(0.4mm);

      \draw ($(x)+(0.75,0.2)$)--($(x)+(0.05,0.55)$);
   
      \draw[blue]($(b)+(0,-0.1)$)--($(a)+(0,0.1)$);
      \draw[blue]($(a)+(0,0.1)$)arc(90:270:0.85mm)arc(270:450:0.7mm);
      \draw[blue]($(b)+(0,-0.1)$)arc(-90:90:0.85mm)arc(90:270:0.7mm);
   
      \draw[blue] ($(a)+(0.07,0.05)$)..controls($(b)+(0,-0.22)$)..($(b)!0.5!(c)$);
      \draw[blue] ($(d)+(0,-0.1)$)..controls($(c)+(0,0.22)$)..($(b)!0.5!(c)$);
      \draw[blue] ($(d)+(0,-0.1)$)arc(-90:90:0.85mm)arc(90:270:0.7mm);
   
      \draw[blue] ($(a)+(0.07,0.03)$)..controls($(b)+(0,-0.28)$)..($(c)+(-0.04,0.08)$)arc(120:-90:0.85mm)arc(-90:-270:0.7mm);

      \coordinate (x) at(0,2.7);
      \coordinate(a) at ($(x)+(0,0)$);
      \coordinate(b) at ($(x)+(0.5,0)$);
      \coordinate(c) at ($(x)+(1,0)$);
      \coordinate(d) at ($(x)+(1.5,0)$);
      \draw (a)--(b)--(c)--(d);
      \draw[fill=white](a)circle(0.4mm);
      \draw[fill=white](b)circle(0.4mm);
      \draw[fill=white](c)circle(0.4mm);
      \draw[fill=white](d)circle(0.4mm);
   
      \draw ($(x)+(0.75,0.25)$)--($(x)+(0.75,0.7)$);
   
      \draw[blue]($(b)+(0,-0.1)$)--($(a)+(0,0.1)$);
      \draw[blue]($(a)+(0,0.1)$)arc(90:270:0.85mm)arc(270:450:0.7mm);
      \draw[blue]($(b)+(0,-0.1)$)arc(-90:90:0.85mm)arc(90:270:0.7mm);
   
      \draw[blue] ($(a)+(0.07,0.05)$)..controls($(b)+(0,-0.22)$)..($(b)!0.5!(c)$);
      \draw[blue] ($(d)+(-0.07,-0.05)$)..controls($(c)+(0,0.22)$)..($(b)!0.5!(c)$);
   
      \draw[blue]($(d)+(0,-0.1)$)--($(c)+(0,0.1)$);
      \draw[blue]($(c)+(0,0.1)$)arc(90:270:0.85mm)arc(270:450:0.7mm);
      \draw[blue]($(d)+(0,-0.1)$)arc(-90:90:0.85mm)arc(90:270:0.7mm);
      \coordinate (x) at(0,3.7);
      \coordinate(a) at ($(x)+(0,0)$);
      \coordinate(b) at ($(x)+(0.5,0)$);
      \coordinate(c) at ($(x)+(1,0)$);
      \coordinate(d) at ($(x)+(1.5,0)$);
      \draw (a)--(b)--(c)--(d);
      \draw[fill=white](a)circle(0.4mm);
      \draw[fill=white](b)circle(0.4mm);
      \draw[fill=white](c)circle(0.4mm);
      \draw[fill=white](d)circle(0.4mm);

      \draw ($(x)+(0.75,0)+(-0.4,-0.3)$)--($(x)+(0.75,0)+(-2,-0.8)$);
      \draw ($(x)+(0.75,0)+(0.4,-0.3)$)--($(x)+(0.75,0)+(2,-0.8)$);
      
      \draw[blue]($(b)+(0,-0.1)$)--($(a)+(0,0.1)$);
      \draw[blue]($(a)+(0,0.1)$)arc(90:270:0.85mm)arc(270:450:0.7mm);
      \draw[blue]($(b)+(0,-0.1)$)arc(-90:90:0.85mm);
   
      \draw[blue]($(c)+(0,-0.1)$)--($(b)+(0,0.1)$);
      \draw[blue]($(b)+(0,0.1)$)arc(90:270:0.85mm);
      \draw[blue]($(c)+(0,-0.1)$)arc(-90:90:0.85mm);
   
      \draw[blue]($(d)+(0,-0.1)$)--($(c)+(0,0.1)$);
      \draw[blue]($(c)+(0,0.1)$)arc(90:270:0.85mm);
      \draw[blue]($(d)+(0,-0.1)$)arc(-90:90:0.85mm)arc(90:270:0.7mm);
   
      \coordinate (x) at(2.8,0.75);
      \coordinate(a) at ($(x)+(0,0)$);
      \coordinate(b) at ($(x)+(0.5,0)$);
      \coordinate(c) at ($(x)+(1,0)$);
      \coordinate(d) at ($(x)+(1.5,0)$);
      \draw (a)--(b)--(c)--(d);
      \draw[fill=white](a)circle(0.4mm);
      \draw[fill=white](b)circle(0.4mm);
      \draw[fill=white](c)circle(0.4mm);
      \draw[fill=white](d)circle(0.4mm);
   
      \draw ($(x)+(0.75,0)+(0,-0.3)$)--($(x)+(0.75,0)+(0,-1.2)$);
   
      \draw ($(x)+(0.75,0)+(0,0.3)$)--($(x)+(0.75,0)+(-0.2,1.7)$);
   
      \draw ($(x)+(0.75,0)+(-0.3,-0.3)$)--($(x)+(0.75,0)+(-1.6,-2.5)$);

      \draw[blue]($(b)+(0,-0.1)$)--($(a)+(0,0.1)$);
      \draw[blue]($(a)+(0,0.1)$)arc(90:270:0.85mm)arc(270:450:0.7mm);
      \draw[blue]($(b)+(0,-0.1)$)arc(-90:90:0.85mm);
   
      \draw[blue]($(b)+(0.06,0.08)$)arc(60:270:0.85mm);
   
      \draw[blue] ($(b)+(0.06,0.08)$)..controls($(c)+(0,-0.24)$)..($(d)+(-0.04,0.08)$);
      \draw[blue]($(d)+(-0.04,0.08)$)arc(120:-90:0.7mm);
      \draw[blue] ($(d)+(-0,-0.1)$)arc(-90:-270:0.8mm);
      \draw[blue] ($(d)+(-0,-0.1)$)arc(-90:90:1.1mm);
      \draw[blue] ($(c)+(-0,-0.07)$)arc(-90:90:0.7mm)arc(90:270:0.85mm)--($(d)+(-0,0.12)$);

      
      \coordinate (x) at(2.7,2.7);
      \coordinate(a) at ($(x)+(0,0)$);
      \coordinate(b) at ($(x)+(0.5,0)$);
      \coordinate(c) at ($(x)+(1,0)$);
      \coordinate(d) at ($(x)+(1.5,0)$);
      \draw (a)--(b)--(c)--(d);
      \draw[fill=white](a)circle(0.4mm);
      \draw[fill=white](b)circle(0.4mm);
      \draw[fill=white](c)circle(0.4mm);
      \draw[fill=white](d)circle(0.4mm);

      \draw ($(x)+(0.75,0)+(-0.3,-0.3)$)--($(x)+(0.75,0)+(-2.1,-2.9)$);

      \draw[blue]($(b)+(0,-0.1)$)--($(a)+(0,0.1)$);
      \draw[blue]($(a)+(0,0.1)$)arc(90:270:0.85mm)arc(270:450:0.7mm);
      \draw[blue]($(b)+(0,-0.1)$)arc(-90:90:0.85mm);
   
      \draw[blue]($(c)+(0,-0.1)$)--($(b)+(0,0.1)$);
      \draw[blue]($(b)+(0,0.1)$)arc(90:270:0.85mm);
      \draw[blue]($(c)+(0,-0.1)$)arc(-90:90:0.85mm)arc(90:270:0.7mm);
   
      \draw[blue] ($(b)+(0.06,0.06)$)..controls($(c)+(0,-0.24)$)..($(d)+(-0.04,0.08)$);
      \draw[blue]($(d)+(-0.04,0.08)$)arc(120:-90:0.85mm)arc(-90:-270:0.7mm);
      \coordinate (x) at(-2.8,0.75);
      \coordinate(a) at ($(x)+(0,0)$);
      \coordinate(b) at ($(x)+(0.5,0)$);
      \coordinate(c) at ($(x)+(1,0)$);
      \coordinate(d) at ($(x)+(1.5,0)$);
      \draw (a)--(b)--(c)--(d);
      \draw[fill=white](a)circle(0.4mm);
      \draw[fill=white](b)circle(0.4mm);
      \draw[fill=white](c)circle(0.4mm);
      \draw[fill=white](d)circle(0.4mm);

      \draw ($(x)+(0.75,0)+(0,-0.3)$)--($(x)+(0.75,0)+(0,-1.2)$);
   
      \draw ($(x)+(0.75,0)+(0,0.3)$)--($(x)+(0.75,0)+(0.2,1.7)$);
   
      \draw ($(x)+(0.75,0)+(0.3,-0.3)$)--($(x)+(0.75,0)+(1.6,-2.5)$);

      \draw[blue]($(a)+(0.03,-0.08)$)arc(-70:-270:0.85mm)arc(-270:-450:0.7mm);
      \draw[blue] ($(a)+(0.07,-0.05)$)..controls($(b)+(0,0.24)$)..($(c)+(-0.06,-0.08)$);
      \draw[blue] ($(a)+(0.03,-0.08)$)--($(b)+(-0.04,0.085)$);
      \draw[blue] ($(b)+(-0.04,0.085)$)arc(120:-90:0.85mm)arc(-90:-270:0.7mm);

      \draw[blue]($(c)+(-0.06,-0.08)$)arc(-120:90:0.85mm);
   
      \draw[blue]($(d)+(0,-0.1)$)--($(c)+(0,0.1)$);
      \draw[blue]($(c)+(0,0.1)$)arc(90:270:0.85mm);
      \draw[blue]($(d)+(0,-0.1)$)arc(-90:90:0.85mm)arc(90:270:0.7mm);
   
      \coordinate (x) at(-2.7,2.7);
      \coordinate(a) at ($(x)+(0,0)$);
      \coordinate(b) at ($(x)+(0.5,0)$);
      \coordinate(c) at ($(x)+(1,0)$);
      \coordinate(d) at ($(x)+(1.5,0)$);
      \draw (a)--(b)--(c)--(d);
      \draw[fill=white](a)circle(0.4mm);
      \draw[fill=white](b)circle(0.4mm);
      \draw[fill=white](c)circle(0.4mm);
      \draw[fill=white](d)circle(0.4mm);

      \draw ($(x)+(0.75,0)+(0.3,-0.3)$)--($(x)+(0.75,0)+(2.1,-2.9)$);

      \draw[blue]($(a)+(0.03,-0.07)$)arc(-70:-270:0.85mm)arc(-270:-450:0.7mm);
      \draw[blue] ($(a)+(0.03,-0.07)$)..controls($(b)+(0,0.24)$)..($(c)+(-0.06,-0.06)$);
   
      \draw[blue]($(c)+(0,-0.1)$)--($(b)+(0,0.1)$);
      \draw[blue]($(b)+(0,0.1)$)arc(90:270:0.85mm)arc(270:450:0.7mm);
      \draw[blue]($(c)+(0,-0.1)$)arc(-90:90:0.85mm);
   
      \draw[blue]($(d)+(0,-0.1)$)--($(c)+(0,0.1)$);
      \draw[blue]($(c)+(0,0.1)$)arc(90:270:0.85mm);
      \draw[blue]($(d)+(0,-0.1)$)arc(-90:90:0.85mm)arc(90:270:0.7mm);
         \coordinate (x) at(0,-0.4);
         \coordinate(a) at ($(x)+(0,0)$);
         \coordinate(b) at ($(x)+(0.5,0)$);
         \coordinate(c) at ($(x)+(1,0)$);
         \coordinate(d) at ($(x)+(1.5,0)$);
         \draw (a)--(b)--(c)--(d);
         \draw[fill=white](a)circle(0.4mm);
         \draw[fill=white](b)circle(0.4mm);
         \draw[fill=white](c)circle(0.4mm);
         \draw[fill=white](d)circle(0.4mm);

      \draw[blue] ($(d)+(-0.06,0.07)$)..controls($(c)+(0,-0.28)$)..($(b)+(0.06,0.06)$);
      \draw[blue] ($(d)+(-0.06,0.07)$)arc(120:-90:0.85mm)arc(-90:-270:0.7mm);

      \draw[blue] ($(a)+(0.03,-0.08)$)..controls($(b)+(0,0.28)$)..($(c)+(-0.06,-0.06)$);
      \draw[blue] ($(a)+(0.03,-0.08)$)arc(-80:-270:0.85mm)arc(-270:-450:0.7mm);

      \draw[blue]($(b)+(0,0.1)$)--($(c)+(0,-0.1)$);
      \draw[blue]($(c)+(0,-0.1)$)arc(-90:90:0.85mm)arc(90:270:0.7mm);
      \draw[blue]($(b)+(0,0.1)$)arc(90:270:0.85mm)arc(270:450:0.7mm);

      \draw ($(x)+(0.75,0)+(0,-0.3)$)--($(x)+(0.75,0)+(0,-0.7)$);

         \coordinate (x) at(0,-1.3);
         \coordinate(a) at ($(x)+(0,0)$);
         \coordinate(b) at ($(x)+(0.5,0)$);
         \coordinate(c) at ($(x)+(1,0)$);
         \coordinate(d) at ($(x)+(1.5,0)$);
         \draw (a)--(b)--(c)--(d);
         \draw[fill=white](a)circle(0.4mm);
         \draw[fill=white](b)circle(0.4mm);
         \draw[fill=white](c)circle(0.4mm);
         \draw[fill=white](d)circle(0.4mm);
   
         \draw ($(x)+(0.25,-0.2)$)--($(x)+(-0.3,-0.45)$);
         \draw ($(x)+(1.4,-0.2)$)--($(x)+(1.8,-0.45)$);

         \draw[blue] ($(d)+(0,0.1)$)arc(90:-90:0.85mm)arc(-90:-270:0.7mm);

         \draw[blue] ($(d)+(-0.07,0.03)$)..controls($(c)+(0,-0.28)$)..($(b)+(0.04,0.08)$)arc(-120-180:90-180:0.85mm)arc(90-180:270-180:0.7mm);
      
         \draw[blue] ($(a)+(0.07,-0.03)$)..controls($(b)+(0,0.28)$)..($(c)+(-0.04,-0.08)$)arc(-120:90:0.85mm)arc(90:270:0.7mm);
      
         \draw[blue] ($(d)+(0,0.1)$)..controls($(c)+(0,-0.22)$)..($(b)!0.5!(c)$);
         \draw[blue] ($(a)+(0,-0.1)$)..controls($(b)+(0,0.22)$)..($(b)!0.5!(c)$);
         \draw[blue] ($(a)+(0,-0.1)$)arc(90-180:-90-180:0.85mm)arc(-90-180:-270-180:0.7mm);

         \coordinate (x) at(1.1,-2);
         \coordinate(a) at ($(x)+(0,0)$);
         \coordinate(b) at ($(x)+(0.5,0)$);
         \coordinate(c) at ($(x)+(1,0)$);
         \coordinate(d) at ($(x)+(1.5,0)$);
         \draw (a)--(b)--(c)--(d);
         \draw[fill=white](a)circle(0.4mm);
         \draw[fill=white](b)circle(0.4mm);
         \draw[fill=white](c)circle(0.4mm);
         \draw[fill=white](d)circle(0.4mm);

         \draw[blue]($(c)+(0,-0.1)$)--($(d)+(0,0.1)$);
         \draw[blue]($(d)+(0,0.1)$)arc(-90-180:-270-180:0.85mm)arc(-270-180:-450-180:0.7mm);
         \draw[blue]($(c)+(0,-0.1)$)arc(90-180:-90-180:0.85mm)arc(-90-180:-270-180:0.7mm);
      
         \draw[blue] ($(d)+(-0.07,0.05)$)..controls($(c)+(0,-0.22)$)..($(b)!0.5!(c)$);
         \draw[blue] ($(a)+(0,-0.1)$)..controls($(b)+(0,0.22)$)..($(b)!0.5!(c)$);
         \draw[blue] ($(a)+(0,-0.1)$)arc(90-180:-90-180:0.85mm)arc(-90-180:-270-180:0.7mm);
      
         \draw[blue] ($(d)+(-0.07,0.03)$)..controls($(c)+(0,-0.28)$)..($(b)+(0.04,0.08)$)arc(-120-180:90-180:0.85mm)arc(90-180:270-180:0.7mm);

         \coordinate (x) at(-1.1,-2);
         \coordinate(a) at ($(x)+(0,0)$);
         \coordinate(b) at ($(x)+(0.5,0)$);
         \coordinate(c) at ($(x)+(1,0)$);
         \coordinate(d) at ($(x)+(1.5,0)$);
         \draw (a)--(b)--(c)--(d);
         \draw[fill=white](a)circle(0.4mm);
         \draw[fill=white](b)circle(0.4mm);
         \draw[fill=white](c)circle(0.4mm);
         \draw[fill=white](d)circle(0.4mm);

         \draw[blue]($(b)+(0,0.1)$)--($(a)+(0,-0.1)$);
         \draw[blue]($(a)+(0,-0.1)$)arc(-90:-270:0.85mm)arc(-270:-450:0.7mm);
         \draw[blue]($(b)+(0,0.1)$)arc(90:-90:0.85mm)arc(-90:-270:0.7mm);
      
         \draw[blue] ($(a)+(-0.07,-0.05)$)..controls($(b)+(0,0.22)$)..($(b)!0.5!(c)$);
         \draw[blue] ($(d)+(0,0.1)$)..controls($(c)+(0,-0.22)$)..($(b)!0.5!(c)$);
         \draw[blue] ($(d)+(0,0.1)$)arc(90:-90:0.85mm)arc(-90:-270:0.7mm);
      
         \draw[blue] ($(a)+(0.07,-0.03)$)..controls($(b)+(0,0.28)$)..($(c)+(-0.04,-0.08)$)arc(-120:90:0.85mm)arc(90:270:0.7mm);

         \coordinate (x) at(0,-2.7);
         \coordinate(a) at ($(x)+(0,0)$);
         \coordinate(b) at ($(x)+(0.5,0)$);
         \coordinate(c) at ($(x)+(1,0)$);
         \coordinate(d) at ($(x)+(1.5,0)$);
         \draw (a)--(b)--(c)--(d);
         \draw[fill=white](a)circle(0.4mm);
         \draw[fill=white](b)circle(0.4mm);
         \draw[fill=white](c)circle(0.4mm);
         \draw[fill=white](d)circle(0.4mm);
      
         \draw[blue]($(b)+(0,0.1)$)--($(a)+(0,-0.1)$);
         \draw[blue]($(a)+(0,-0.1)$)arc(-90:-270:0.85mm)arc(-270:-450:0.7mm);
         \draw[blue]($(b)+(0,0.1)$)arc(90:-90:0.85mm)arc(-90:-270:0.7mm);
      
         \draw[blue] ($(a)+(0.07,-0.05)$)..controls($(b)+(0,0.22)$)..($(b)!0.5!(c)$);
         \draw[blue] ($(d)+(-0.07,0.05)$)..controls($(c)+(0,-0.22)$)..($(b)!0.5!(c)$);
      
         \draw[blue]($(d)+(0,0.1)$)--($(c)+(0,-0.1)$);
         \draw[blue]($(c)+(0,-0.1)$)arc(-90:-270:0.85mm)arc(-270:-450:0.7mm);
         \draw[blue]($(d)+(0,0.1)$)arc(90:-90:0.85mm)arc(-90:-270:0.7mm);
   
      \draw ($(x)+(0.25,0.2)$)--($(x)+(-0.3,0.55)$);
      \draw ($(x)+(1.2,0.2)$)--($(x)+(1.7,0.55)$);
   
      \draw ($(x)+(0.75,0)+(0,-0.25)$)--($(x)+(0.75,0)+(0,-0.7)$);
      
         \coordinate (x) at(0,-3.7);
         \coordinate(a) at ($(x)+(0,0)$);
         \coordinate(b) at ($(x)+(0.5,0)$);
         \coordinate(c) at ($(x)+(1,0)$);
         \coordinate(d) at ($(x)+(1.5,0)$);
         \draw (a)--(b)--(c)--(d);
         \draw[fill=white](a)circle(0.4mm);
         \draw[fill=white](b)circle(0.4mm);
         \draw[fill=white](c)circle(0.4mm);
         \draw[fill=white](d)circle(0.4mm);

      \draw ($(x)+(0.75,0)+(-0.4,0.3)$)--($(x)+(0.75,0)+(-2,0.8)$);
      \draw ($(x)+(0.75,0)+(0.4,0.3)$)--($(x)+(0.75,0)+(2,0.8)$);

         \draw[blue]($(b)+(0,0.1)$)--($(a)+(0,-0.1)$);
         \draw[blue]($(a)+(0,-0.1)$)arc(-90:-270:0.85mm)arc(-270:-450:0.7mm);
         \draw[blue]($(b)+(0,0.1)$)arc(90:-90:0.85mm);
      
         \draw[blue]($(c)+(0,0.1)$)--($(b)+(0,-0.1)$);
         \draw[blue]($(b)+(0,-0.1)$)arc(-90:-270:0.85mm);
         \draw[blue]($(c)+(0,0.1)$)arc(90:-90:0.85mm);
      
         \draw[blue]($(d)+(0,0.1)$)--($(c)+(0,-0.1)$);
         \draw[blue]($(c)+(0,-0.1)$)arc(-90:-270:0.85mm);
         \draw[blue]($(d)+(0,0.1)$)arc(90:-90:0.85mm)arc(-90:-270:0.7mm);
      
         \coordinate (x) at(-2.8,-0.75);
         \coordinate(a) at ($(x)+(0,0)$);
         \coordinate(b) at ($(x)+(0.5,0)$);
         \coordinate(c) at ($(x)+(1,0)$);
         \coordinate(d) at ($(x)+(1.5,0)$);
         \draw (a)--(b)--(c)--(d);
         \draw[fill=white](a)circle(0.4mm);
         \draw[fill=white](b)circle(0.4mm);
         \draw[fill=white](c)circle(0.4mm);
         \draw[fill=white](d)circle(0.4mm);

      \draw ($(x)+(0.75,0)+(0,-0.3)$)--($(x)+(0.75,0)+(0.2,-1.7)$);
   
      \draw ($(x)+(0.75,0)+(0.3,0.3)$)--($(x)+(0.75,0)+(1.6,2.5)$);

         \draw[blue]($(b)+(0,0.1)$)--($(a)+(0,-0.1)$);
         \draw[blue]($(a)+(0,-0.1)$)arc(-90:-270:0.85mm)arc(-270:-450:0.7mm);
         \draw[blue]($(b)+(0,0.1)$)arc(90:-90:0.85mm);
      
         \draw[blue]($(b)+(0.06,-0.08)$)arc(-60:-270:0.85mm);
      
         \draw[blue] ($(b)+(0.06,-0.08)$)..controls($(c)+(0,0.24)$)..($(d)+(-0.04,-0.08)$);
         \draw[blue]($(d)+(-0.04,-0.08)$)arc(-120:90:0.7mm);
         \draw[blue] ($(d)+(-0,0.1)$)arc(90:270:0.8mm);
         \draw[blue] ($(d)+(-0,0.1)$)arc(90:-90:1.1mm);
         \draw[blue] ($(c)+(-0,0.07)$)arc(90:-90:0.7mm)arc(-90:-270:0.85mm)--($(d)+(-0,-0.12)$);

         
         \coordinate (x) at(-2.7,-2.7);
         \coordinate(a) at ($(x)+(0,0)$);
         \coordinate(b) at ($(x)+(0.5,0)$);
         \coordinate(c) at ($(x)+(1,0)$);
         \coordinate(d) at ($(x)+(1.5,0)$);
         \draw (a)--(b)--(c)--(d);
         \draw[fill=white](a)circle(0.4mm);
         \draw[fill=white](b)circle(0.4mm);
         \draw[fill=white](c)circle(0.4mm);
         \draw[fill=white](d)circle(0.4mm);

      \draw ($(x)+(0.75,0)+(0.3,0.3)$)--($(x)+(0.75,0)+(2.1,2.9)$);

         \draw[blue]($(b)+(0,0.1)$)--($(a)+(0,-0.1)$);
         \draw[blue]($(a)+(0,-0.1)$)arc(-90:-270:0.85mm)arc(-270:-450:0.7mm);
         \draw[blue]($(b)+(0,0.1)$)arc(90:-90:0.85mm);
      
         \draw[blue]($(c)+(0,0.1)$)--($(b)+(0,-0.1)$);
         \draw[blue]($(b)+(0,-0.1)$)arc(-90:-270:0.85mm);
         \draw[blue]($(c)+(0,0.1)$)arc(90:-90:0.85mm)arc(-90:-270:0.7mm);
      
         \draw[blue] ($(b)+(0.06,-0.06)$)..controls($(c)+(0,0.24)$)..($(d)+(-0.04,-0.08)$);
         \draw[blue]($(d)+(-0.04,-0.08)$)arc(-120:90:0.85mm)arc(90:270:0.7mm);
         \coordinate (x) at(2.8,-0.75);
         \coordinate(a) at ($(x)+(0,0)$);
         \coordinate(b) at ($(x)+(0.5,0)$);
         \coordinate(c) at ($(x)+(1,0)$);
         \coordinate(d) at ($(x)+(1.5,0)$);
         \draw (a)--(b)--(c)--(d);
         \draw[fill=white](a)circle(0.4mm);
         \draw[fill=white](b)circle(0.4mm);
         \draw[fill=white](c)circle(0.4mm);
         \draw[fill=white](d)circle(0.4mm);

      \draw ($(x)+(0.75,0)+(0,-0.3)$)--($(x)+(0.75,0)+(-0.2,-1.7)$);
   
      \draw ($(x)+(0.75,0)+(-0.3,0.3)$)--($(x)+(0.75,0)+(-1.6,2.5)$);

         \draw[blue]($(c)+(0,0.1)$)--($(d)+(0,-0.1)$);
         \draw[blue]($(d)+(0,-0.1)$)arc(-90:90:0.85mm)arc(90:270:0.7mm);
         \draw[blue]($(c)+(0,0.1)$)arc(90:270:0.85mm);
      
         \draw[blue]($(c)+(-0.06,-0.08)$)arc(-120:90:0.85mm);
      
         \draw[blue] ($(c)+(-0.06,-0.08)$)..controls($(b)+(0,0.24)$)..($(a)+(0.04,-0.08)$);
         \draw[blue]($(a)+(0.04,-0.08)$)arc(-70:-270:0.7mm);
         \draw[blue] ($(a)+(-0,0.1)$)arc(90:-90:0.8mm);
         \draw[blue] ($(a)+(-0,0.1)$)arc(90:270:1.1mm);
         \draw[blue] ($(b)+(-0,0.07)$)arc(90:270:0.7mm)arc(270:450:0.85mm)--($(a)+(-0,-0.12)$);
      
         \coordinate (x) at(2.7,-2.7);
         \coordinate(a) at ($(x)+(0,0)$);
         \coordinate(b) at ($(x)+(0.5,0)$);
         \coordinate(c) at ($(x)+(1,0)$);
         \coordinate(d) at ($(x)+(1.5,0)$);
         \draw (a)--(b)--(c)--(d);
         \draw[fill=white](a)circle(0.4mm);
         \draw[fill=white](b)circle(0.4mm);
         \draw[fill=white](c)circle(0.4mm);
         \draw[fill=white](d)circle(0.4mm);

         \draw ($(x)+(0.75,0)+(-0.3,0.3)$)--($(x)+(0.75,0)+(-2.1,2.9)$); 
         
         \draw[blue]($(a)+(0.03,0.07)$)arc(70:270:0.85mm)arc(270:450:0.7mm);
         \draw[blue] ($(a)+(0.03,0.07)$)..controls($(b)+(0,-0.24)$)..($(c)+(-0.06,0.06)$);
      
         \draw[blue]($(c)+(0,0.1)$)--($(b)+(0,-0.1)$);
         \draw[blue]($(b)+(0,-0.1)$)arc(-90:-270:0.85mm)arc(-270:-450:0.7mm);
         \draw[blue]($(c)+(0,0.1)$)arc(90:-90:0.85mm);
      
         \draw[blue]($(d)+(0,0.1)$)--($(c)+(0,-0.1)$);
         \draw[blue]($(c)+(0,-0.1)$)arc(-90:-270:0.85mm);
         \draw[blue]($(d)+(0,0.1)$)arc(90:-90:0.85mm)arc(-90:-270:0.7mm);
   \end{tikzpicture}
   \end{tabular}
   }
\caption{All $2$-tilting complexes (in the left) and their geometric description (in the right) over the Brauer tree algebra of the line having $3$ edges. There are precisely $\binom{6}{3}=20$ elements.}
\label{fig:intro}
\end{figure}

To prove Theorems \ref{theorem main} and \ref{theorem sub}, we use a combinatorial description of $2$-tilting complexes over a Brauer graph algebra due to \cite{AAC}, and its geometric interpretation in terms of the oriented marked surface (Theorem \ref{thm:AAC}). 
Our main results are Theorems \ref{thm:main} and \ref{thm:gluing}; which correspond to Theorems \ref{theorem main} and \ref{theorem sub} in our geometric model. 

Finally, we give an application to a class of special biserial algebras. 
To a given plane tree $\mathbf{G}$, we associate a complete gentle algebra $A_{\mathbf{G}}$ (Definition \ref{def:BGA}) that is infinite dimensional. More precisely, it is a module-finite algebra over the formal power series ring $k[[t]]$ in one valuable over a field $k$. 
In particular, we can apply $2$-silting theory developed in \cite{Kimura}.  
For the ideal $I_{\rm sc}$ generated by all special cycles in $A_{\mathbf{G}}$ (see Definition \ref{def:QG}), we prove that canonical surjections 
\[
   A_{\mathbf{G}} \twoheadrightarrow B_{\mathbf{G}} \twoheadrightarrow A_{\mathbf{G}}/I_{\rm sc}
\]
induce isomorphisms 
\[
   \twosilt A_{\mathbf{G}} \cong \twotilt B_{\mathbf{G}} \cong \twosilt (A_{\mathbf{G}}/I_{\rm sc}),
\]
where we denote by $\twosilt A_{\mathbf{G}}$ the set of isomorphism classes of basic $2$-silting complexes of $A_{\mathbf{G}}$. More generally, we prove the following. 

\begin{corollary} (Theorem \ref{theorem main} and Corollary \ref{cor:gentle})
   Let $J$ be an ideal contained in $I_{\rm sc}$. 
   Then there is a bijection between $\twosilt A_{\mathbf{G}}$ and $\twosilt (A_{\mathbf{G}}/J)$, and therefore their cardinality are 
   \[
   \#\twosilt A_{\mathbf{G}} = \#\twosilt (A_{\mathbf{G}}/ J) =\binom{2|\mathbf{G}|}{|\mathbf{G}|}. 
   \]
   In particular, this number depends only on the number of edges of $\mathbf{G}$.
\end{corollary}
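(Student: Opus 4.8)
The plan is to obtain this as a combination of two results: the invariance of $2$-silting complexes under quotients by ideals inside $I_{\rm sc}$, which is the content of Corollary \ref{cor:gentle}, and the enumeration $\#\twotilt B_{\mathbf{G}}=\binom{2|\mathbf{G}|}{|\mathbf{G}|}$ from Theorem \ref{theorem main}(2). The bridge between the Noetherian algebra $A_{\mathbf{G}}$ and the finite-dimensional Brauer tree algebra $B_{\mathbf{G}}$ is the factorization $A_{\mathbf{G}}\twoheadrightarrow B_{\mathbf{G}}\twoheadrightarrow A_{\mathbf{G}}/I_{\rm sc}$, together with the fact that $B_{\mathbf{G}}$ is symmetric, hence self-injective, so that every $2$-silting complex of $B_{\mathbf{G}}$ is a $2$-tilting complex, i.e. $\twosilt B_{\mathbf{G}}=\twotilt B_{\mathbf{G}}$.

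First I would observe that, since $J\subseteq I_{\rm sc}$, the quotient $A_{\mathbf{G}}/J$ is still module-finite over the complete local ring $k[[t]]$ (or over $k$ if $t\in J$), so that Corollary \ref{cor:gentle} applies and yields a bijection $\twosilt A_{\mathbf{G}}\cong\twosilt(A_{\mathbf{G}}/J)$; in particular $\#\twosilt A_{\mathbf{G}}=\#\twosilt(A_{\mathbf{G}}/J)$. To evaluate this common value, set $J_{0}:=\ker(A_{\mathbf{G}}\twoheadrightarrow B_{\mathbf{G}})$. Since the composite $A_{\mathbf{G}}\twoheadrightarrow B_{\mathbf{G}}\twoheadrightarrow A_{\mathbf{G}}/I_{\rm sc}$ has kernel $I_{\rm sc}$, we have $J_{0}\subseteq I_{\rm sc}$, and Corollary \ref{cor:gentle} applied to $J_{0}$ gives
\[
  \twosilt A_{\mathbf{G}}\ \cong\ \twosilt(A_{\mathbf{G}}/J_{0})\ =\ \twosilt B_{\mathbf{G}}\ =\ \twotilt B_{\mathbf{G}}.
\]
By Theorem \ref{theorem main}(2) the right-hand side has cardinality $\binom{2|\mathbf{G}|}{|\mathbf{G}|}$, which depends only on the number of edges of $\mathbf{G}$; combining this with the bijection of the first step completes the proof of all three assertions.

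The genuinely hard part is not this bookkeeping but Corollary \ref{cor:gentle} itself, which is where I expect the main obstacle to lie. There one must show that the surjection $A_{\mathbf{G}}\twoheadrightarrow A_{\mathbf{G}}/J$ induces, through the derived base change $-\otimes_{A_{\mathbf{G}}}^{\mathbf{L}}(A_{\mathbf{G}}/J)$, a bijection between $2$-silting complexes: one has to check both that this functor carries $2$-silting complexes to $2$-silting complexes and that it is essentially surjective. The reason this should work is that the invariants classifying a $2$-silting complex of $A_{\mathbf{G}}$ — its $g$-vector and its cohomology in the two relevant degrees — are unchanged by imposing relations that lie in $I_{\rm sc}$, since such relations are built from special cycles and therefore do not affect the part of $A_{\mathbf{G}}$ that controls two-term complexes; making this precise relies on the silting lifting and reduction results for module-finite Noetherian algebras of \cite{Kimura}. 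Once Corollary \ref{cor:gentle} is established in this generality, the statement above is a formal consequence.
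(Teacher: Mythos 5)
Your proof is correct and follows essentially the same route as the paper: reduce via the bijection of Theorem \ref{thm:reduction}/Corollary \ref{cor:gentle} to the Brauer tree algebra $B_{\mathbf{G}}$, use that $B_{\mathbf{G}}$ is symmetric so $\twosilt B_{\mathbf{G}}=\twotilt B_{\mathbf{G}}$, and then invoke the count from Theorem \ref{theorem main}(2). One small mismatch worth noting in your speculative last paragraph: the paper does not prove the reduction $\twosilt A_{\mathbf{G}}\cong\twosilt(A_{\mathbf{G}}/J)$ via derived base change $-\otimes^{\mathbf{L}}_{A_{\mathbf{G}}}(A_{\mathbf{G}}/J)$, but by applying \cite[Theorem 1.4]{Kimura}, whose hypothesis is verified by observing that every special cycle annihilates all bricks over $A_{\mathbf{G}}$ (so that $J\subseteq I_{\rm sc}$ lies inside the annihilator of all bricks); the two mechanisms are not the same, and the brick-annihilator criterion is the one the paper actually uses.
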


Note that the equality in Theorem \ref{theorem main}(2) is known for particular cases: for Brauer star algebras \cite{Adachi16b} and for Brauer line algebras \cite{Aoki18}. 
After the author obtained the main result (Theorem \ref{theorem main}) of this paper, Asashiba-Mizuno-Nakashima \cite{AMN} prove the same equality $\#\twotilt B_{\mathbf{G}}=\binom{2|\mathbf{G}|}{|\mathbf{G}|}$ in (2) by a completely different way. 

This paper is organized as follows. 
In Section \ref{sec:preliminaries}, we define Brauer graph algebras and complete gentle algebras from ribbon graphs. We also discuss their two-term silting/tilting theory. 
In Section \ref{sec:AAC}, we give a description of all two-term tilting complexes over Brauer tree algebras in terms of arcs on the associated plane tree canonically embedded into the sphere. This is a geometric interpretation of \cite[Theorem 4.6]{AAC}, see Section \ref{sec:sw}. 
In Section \ref{sec:glue}, we establish a method of gluing collections of arcs, which is central in this paper. It enables us to enumerate all complete collections of arcs on the initial plane tree from those of its subtrees (Theorem \ref{theorem sub}). Our construction can be parametrized by combinatorial objects called lattice paths. 
In the end of this paper (Section \ref{sec:proof}), we give a proof of Theorem \ref{thm:main} (hence Theorem \ref{theorem main}) by using results established in the previous section.

\section{Preliminaries} \label{sec:preliminaries} 
To each ribbon graph, we associate two kinds of algebras, Brauer graph algebras and complete gentle algebras. We study their two-term silting/tilting complexes.

\subsection{Ribbon graphs}
We begin with basic terminology of graph theory.

\begin{definition}
   A \textit{finite graph} is a triple $G=(G_0,G_1,s)$, where $G_0, G_1$ are finite non-empty sets and $s$ is a map $s\colon G_1 \to \{\{u,v\}\mid u,v\in G_0\}$. 
   \begin{itemize}
      \item An element of $G_0$ is called a \textit{vertex} of $G$. 
      \item An element of $G_1$ is called an \textit{edge} of $G$. 
      \item For an edge $e\in G_1$, each element $v$ of $s(e)$ is called an \textit{endpoint} of $e$. 
   \end{itemize}
\end{definition}

\begin{definition} \label{d_graph-theory}
   Let $G=(G_0,G_1,s)$ be a finite graph. 
   \begin{enumerate}
      \item A \textit{walk} on $G$ is a sequence $w=(e_1,\ldots,e_k)$ of pairwise distinct edges for which there is a sequence of vertices $(v_1,\ldots, v_{k+1})$ such that $s(e_i)=\{v_{i},v_{i+1}\}$ for $i\in \{1,\ldots,k\}$. In this case, $v_1$ and $v_{k+1}$ are called \textit{endpoints} of $w$. 
      \item A walk $w$ is called a \textit{cycle} if its endpoints are the same. 
      \item A \textit{signed walk} is a walk $w=(e_1,\ldots,e_k)$ equipped with assignment of signs $\epsilon(e)\in \{\pm1\}$ with $\epsilon(e_i)\neq \epsilon(e_{i+1})$ for all $i\in \{1,\ldots,k-1\}$.
      \item A graph $G$ is called a \textit{connected graph} if every pair of vertices of $G$ is connected, that is, there exists a walk having them as its endpoints. 
      \item A graph $G$ is called \textit{tree} if it has no cycle and is connected.  
   \end{enumerate}
\end{definition}

\begin{definition}
   A \textit{ribbon graph} is a finite connected graph $G=(G_0,G_1,s)$ equipped with the following data: 
   \begin{itemize}
      \item A collection $\sigma=(\sigma_v)_{v\in G_0}$, where $\sigma_v$ is a cyclic permutation of the edges incident to a vertex $v$.
   \end{itemize} 
   We write $\mathbf{G}=(G_0,G_1,s,\sigma)$ for this ribbon graph. 
\end{definition}

A ribbon graph equipped with an assignment of positive integers $\mathfrak{m}_v$ for each vertex $v$ is called \textit{Brauer graph}. We regard each ribbon graph as a Brauer graph with $\mathfrak{m}_v=1$ for all vertices $v$. 

\subsection{Algebras associated to ribbon graphs}
Brauer graph algebras and complete gentle algebras are defined by quivers with relations. Let $k$ be an algebraically closed field.

\begin{definition} \label{def:QG}
For a ribbon graph $\mathbf{G}=(G_0,G_1,s,\sigma)$, we define a finite quiver $Q_{\mathbf{G}}:=(Q_0,Q_1)$ as follows:
\begin{itemize}
   \item The set $Q_0$ of vertices bijectively corresponds to $G_1$. 
   \item The set $Q_1$ of arrows is given by $\{(e|\sigma_v(e))\mid e\in G_1,v\in s(e)\}$, that is, we draw an arrow from $e$ to $f$ for $e,f\in G_1$ whenever there is a vertex $v\in G_0$ with $f = \sigma_v(e)$.
\end{itemize}
For a vertex $v\in G_0$ and an edge $e\in G_1$ with $v\in s(e)$, 
the \textit{special cycle} $C_{v,e}$ is defined to be a cycle 
   \[
      (e|\sigma_v(e))(\sigma_v(e)|\sigma^2_v(e)) \cdots (\sigma^{m-1}_v(e)|e)
   \] 
in $Q_{\mathbf{G}}$ with no repetitions.
\end{definition}

\begin{definition} \label{def:BGA}
For a ribbon graph ${\mathbf{G}}=(G_0,G_1,s,\sigma)$, let $kQ_{\mathbf{G}}$ be the path algebra of $Q_{\mathbf{G}}$ and $\widehat{kQ_{\mathbf{G}}}$ the completed path algebra of $kQ_{\mathbf{G}}$.
Let $I_{\mathbf{G}}$ (resp, $J_{\mathbf{G}}$) be an ideal in $\widehat{kQ_{\mathbf{G}}}$ generated by all paths of the form 
\[ 
   (\sigma^{-1}_u(e)|e)(e|\sigma_v(e)) \quad \text{(resp., }  C_{u,e} - C_{v,e})
\] 
   for $e\in G_1$ and $u,v\in s(e)$.
We call $A_{\mathbf{G}}: = \widehat{kQ_{\mathbf{G}}}/I_{\mathbf{G}}$ the \textit{complete gentle algebra} and $B_{\mathbf{G}} := \widehat{kQ_{\mathbf{G}}}/(I_{\mathbf{G}}+J_{\mathbf{G}})$ the \textit{Brauer graph algebra} associated to $\mathbf{G}$. 
\end{definition}

Let $k[[t]]$ be the formal power series ring in the valuable $t$ over $k$. 

\begin{proposition} \label{prop:alg}
For a ribbon graph $\mathbf{G}$, the following hold. 
\begin{enumerate}
   \item \cite[Proposition 7.13]{AY20} $A_{\mathbf{G}}$ is a module-finite $k[[t]]$-algebra, that is, finitely generated as a $k[[t]]$-module. 
   \item \cite{Schroll18} $B_{\mathbf{G}}$ is a finite dimensional symmetric special biserial $k$-algebra.  
\end{enumerate}
\end{proposition}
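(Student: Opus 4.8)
The plan is to read off both assertions from a description of the nonzero paths in $\widehat{kQ_{\mathbf G}}/I_{\mathbf G}$; assertion (1) is precisely \cite[Proposition 7.13]{AY20} and assertion (2) is due to \cite{Schroll18} (for Brauer graph algebras with trivial multiplicities it is classical), so I only indicate the arguments. The basic observation is a normal form: modulo $I_{\mathbf G}$, a path $p$ in $Q_{\mathbf G}$ is nonzero only if it runs along a single special cycle, i.e.\ $p = C_{v,e}^{N} q$ with $q$ a proper prefix of $C_{v,e}$ for some edge $e$, some endpoint $v \in s(e)$, and some $N \ge 0$. Indeed the generators $(\sigma^{-1}_u(e)\mid e)(e\mid\sigma_v(e))$ of $I_{\mathbf G}$ prevent a path from leaving a vertex $e$ of $Q_{\mathbf G}$ through an endpoint different from the one by which it arrived, so once the first arrow of $p$ is fixed the whole of $p$ is forced onto one special cycle; the same relations give $C_{u,e}C_{v,e} = C_{v,e}C_{u,e} = 0$ whenever $u \ne v$ are the two endpoints of $e$. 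Establishing that the subpaths of special cycles form a genuine $k$-basis of $\widehat{kQ_{\mathbf G}}/I_{\mathbf G}$ (not merely a spanning set) is the one point that needs real work --- a diamond-lemma (Gr\"obner basis) verification that the displayed generators already reduce the path algebra to that span --- and this, together with the bookkeeping for loops in the general ribbon-graph case, is the main obstacle; I would invoke \cite{AY20} for it.

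Granting the normal form, (2) is short. At the vertex of $Q_{\mathbf G}$ corresponding to an edge $e$ there is exactly one incoming and one outgoing arrow per endpoint in $s(e)$, hence at most two of each; and by the normal form the unique arrow $\beta$ with $\alpha\beta \notin I_{\mathbf G}$ following $\alpha = (e\mid\sigma_v(e))$ is $(\sigma_v(e)\mid\sigma^2_v(e))$, and symmetrically on the other side, so $A_{\mathbf G}$ and $B_{\mathbf G}$ meet the defining conditions of a (complete) special biserial algebra. For finite-dimensionality of $B_{\mathbf G}$ note that in $B_{\mathbf G}$ the ideal $J_{\mathbf G}$ identifies $C_{u,e}$ with $C_{v,e}$; combined with $C_{u,e}C_{v,e}=0$ this forces $C_{v,e}^{2}=0$, so the normal form makes every sufficiently long path vanish and $\dim_k B_{\mathbf G} < \infty$. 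Symmetry is then witnessed by the symmetrizing $k$-linear form $\lambda\colon B_{\mathbf G}\to k$ sending the image of each maximal path $C_{u,e}=C_{v,e}$ --- a generator of $\soc(e_e B_{\mathbf G})\cong\operatorname{top}(e_e B_{\mathbf G})$ --- to $1$ and every other basis path (subpath of a special cycle) to $0$; using this basis together with its complementary dual basis one checks $\lambda(xy)=\lambda(yx)$ and non-degeneracy of $(x,y)\mapsto\lambda(xy)$.

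For (1) I would set $t := \sum_{e\in G_1}\sum_{v\in s(e)} C_{v,e}\in A_{\mathbf G}$, so that $t\, e_e = C_{u,e}+C_{v,e}$ for the endpoints $u,v$ of $e$. Centrality of $t$ is checked arrow by arrow: for $\gamma = (e\mid\sigma_v(e))$ one has $\gamma\, C_{v,\sigma_v(e)} = C_{v,e}\,\gamma$ (both are the length-appropriate initial segment of $C_{v,e}^{2}$), while $\gamma\, C_{w,\sigma_v(e)}=0=C_{w',e}\,\gamma$ for the remaining endpoints by the $I_{\mathbf G}$-relations, so $\gamma t = t\gamma$. Using $C_{u,e}C_{v,e}=0$, an easy induction gives $t^{N}e_e = C_{u,e}^{N}+C_{v,e}^{N}$, whence for any proper prefix $q$ of $C_{v,e}$ one has $C_{v,e}^{N}q = t^{N}e_e\cdot q$ (the $C_{u,e}^{N}$-summand being killed by $q$). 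By the normal form, $e_e A_{\mathbf G} e_f$ is therefore spanned over $k[[t]]$ by the finitely many paths from $e$ to $f$ that are proper prefixes of a special cycle; hence $A_{\mathbf G}$ is a finitely generated $k[[t]]$-module, i.e.\ module-finite, and since $k[[t]]$ is Noetherian $A_{\mathbf G}$ is in particular two-sided Noetherian, so the two-term silting theory of \cite{Kimura} applies. As indicated, the complete proofs are in \cite{AY20} and \cite{Schroll18}.
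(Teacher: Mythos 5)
The paper itself does not prove Proposition~\ref{prop:alg}; both items are stated as citations to \cite{AY20} and \cite{Schroll18}, so there is no in-paper argument for your sketch to match or diverge from. That said, your reconstruction is sound: the gentle relations $(\sigma^{-1}_u(e)\mid e)(e\mid\sigma_v(e))$ (with $u\neq v$ the two endpoints of $e$) do force any nonzero path onto a single special cycle and kill $C_{u,e}C_{v,e}$; the element $t=\sum_{e,v}C_{v,e}$ is central, with $\gamma t = \gamma C_{v,\sigma_v(e)} = C_{v,e}\gamma = t\gamma$ for $\gamma=(e\mid\sigma_v(e))$ and $t^N e_e = C_{u,e}^N + C_{v,e}^N$ by the vanishing of mixed products; in $B_{\mathbf G}$ the identification $C_{u,e}=C_{v,e}$ together with $C_{u,e}C_{v,e}=0$ gives $C_{v,e}^2=0$, hence finite dimensionality; and the dual-basis argument with $\lambda$ supported on the socle generators gives symmetry, using that cyclic rotations of a special cycle are again special cycles. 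You correctly flag that the one substantive gap is upgrading the normal-form spanning set to an actual (topological) $k$-basis via a Gr\"obner/diamond argument, which is precisely what is deferred to \cite{AY20}. One small imprecision worth fixing: $A_{\mathbf G}$ is a completed path algebra, so the normal form yields a topological $k$-basis, not a $k$-basis in the ordinary sense — the $k[[t]]$-module generators are the finitely many proper prefixes of special cycles, and finiteness over $k[[t]]$ (not over $k$) is the point of item (1).
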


\begin{example} \label{ex:tree}
   Let $\mathbf{G}=(G_0,G_1,s,\sigma)$ be a ribbon graph given as follows:
   \[
      \begin{tikzpicture}
         \node(a) at (0,0) {\footnotesize$a$}; 
         \node(b) at (1.5,0) {\footnotesize$b$}; 
         \node(c) at (3,0) {\footnotesize$c$}; 
         \node(d) at (4.5,0) {\footnotesize$d$}; 
         \node(e) at (3,1.5) {\footnotesize$e$}; 
         \draw(a) circle(1.1mm); 
         \draw(b) circle(1.1mm);
         \draw(c) circle(1.1mm);
         \draw(d) circle(1.1mm);
         \draw(e) circle(1.1mm);

         \draw (a)--node[fill=white,inner sep=1]{\small$1$}(b)--node[fill=white,inner sep=1]{\small$2$}(c)--node[fill=white,inner sep=1]{\small$3$}(d) (c)--node[fill=white,inner sep=1]{\small$4$}(e);
      \end{tikzpicture}
   \]
   Namely, $G_0:=\{a,b,c,d,e\}$, $G_1:=\{1,2,3,4\}$ and $s(1)=\{a,b\}$, $s(2)=\{b,c\}$, $s(3)=\{c,d\}$, $s(4)=\{c,e\}$. In addition, the cyclic orderings of edges around vertices are given by
   \[
      \sigma_a = (1), \ \sigma_b = (1,2), \ \sigma_c = (2,3,4), \ \sigma_d = (3) \quad \text{and} \quad \sigma_e = (4).
   \]
   The quiver $Q_{\mathbf{G}}$ and ideals $I_{\mathbf{G}}, J_{\mathbf{G}}$ are given by
   \[
      \begin{tikzpicture}
         \node(1) at (0,0) {$1$}; 
         \node(2) at (1.5,0) {$2$}; 
         \node(3) at (3,0) {$3$}; 
         \node(4) at (2.25,1) {$4$}; 
         \draw[->] (1)..controls(-1,0.5)and(-1,-0.5)..node[left]{\small$\alpha$}(1); 
         \draw[->] (1)..controls(0.5,-0.25)and(1,-0.25)..node[below]{\small$\beta_1$}(2);
         \draw[<-] (1)..controls(0.5,0.25)and(1,0.25)..node[above]{\small$\beta_2$}(2);
         \draw[->] (2)..controls(2,-0.4)and(2.5,-0.4)..node[below]{\small$\gamma_1$}(3);
         \draw[->] (3)..controls(2.9,0.75)and(2.5,0.9)..node[right]{\small$\gamma_2$}(4);
         \draw[->] (4)..controls(2,0.9)and(1.6,0.75)..node[left]{\small$\gamma_3$}(2);
         \draw[->] (3)..controls(4,-0.5)and(4,0.5)..node[right]{\small$\delta$}(3); 
         \draw[->] (4)..controls(2.5,2)and(2,2)..node[right]{\small$\epsilon$}(4); 
         \node at (5.5,0) {\text{and}};
      \end{tikzpicture} 
   \] 
   \begin{eqnarray}\nonumber
      I_{\mathbf{G}} = \langle \alpha\beta_1,\beta_1\gamma_1,\gamma_1\delta,\delta\gamma_2,\gamma_2\epsilon,\epsilon\gamma_3, \gamma_3\beta_2, \beta_2\alpha\rangle, \ 
      J_{\mathbf{G}} = \langle \alpha-\beta_1\beta_2, \beta_2\beta_1- \gamma_1\gamma_2\gamma_3, \gamma_2\gamma_3\gamma_1-\delta, \gamma_3\gamma_1\gamma_2-\epsilon\rangle.
   \end{eqnarray}
\end{example}

\subsection{$2$-siting complexes}
We discuss two-term silting complexes over a module-finite $k[[t]]$-algebra $\Lambda$.
We denote by $\proj \Lambda$ the category of finitely generated projective right $\Lambda$-modules and $\Kb(\proj \Lambda)$ the bounded homotopy category of complexes of $\proj \Lambda$.
Since $k[[t]]$ is a complete local noetherian ring, $\Kb(\proj \Lambda)$ is the Krull-Schmidt triangulated category (\cite[p.132]{CR}).

\begin{definition}
Let $T=(T^i,f^i)$ be a complex in $\Kb(\proj \Lambda)$. 
\begin{enumerate}
   \item $T$ is said to be \textit{presilting} if $\Hom_{\Kb(\proj A)}(T,T[i])=0$ for all integers $i>0$. 
   \item $T$ is said to be \textit{silting} if it is presilting and $\Kb(\proj \Lambda)=\thick T$, where $\thick T$ is the smallest triangulated full subcategory which contains $T$ and is closed under taking direct summands.
   \item $T$ is said to be \textit{tilting} if it is silting and $\Hom_{\Kb(\proj \Lambda)}(T,T[i])=0$ for all integers $i\neq 0$.
   \item $T$ is said to be \textit{two-term} if $T^i=0$ for $i\neq 0,-1$. 
   \item $T$ is said to be \textit{basic} if all indecomposable direct summands of $T$ are pairwise non-isomorphic. 
   \end{enumerate}
We say that $T$ is a \textit{$2$-silting} (resp., \textit{$2$-tilting}) complex if it is two-term and silting (resp., tilting). 
We denote by $\twosilt \Lambda$ (resp., $\twotilt \Lambda$) the set of isomorphism classes of basic $2$-silting (resp., $2$-tilting) complexes for $\Lambda$.
\end{definition}

Let $\Lambda = \bigoplus_{e \in I} P_e$ be a decomposition of $\Lambda$ into indecomposable projective $\Lambda$-modules. 

\begin{definition} \label{def:g-vector}
   For a two-term complex $T=(T^{-1}\to T^0)$ in $\Kb(\proj \Lambda)$, 
   the \textit{$g$-vector} of $T$ is defined to be an integer vector $g(T) = (m_e-m'_e)_{e\in I}$, where $m_e$ (resp., $m'_e$) is the multiplicity of $P_e$ as indecomposable direct summands of $T^0$ (resp., $T^{-1}$). 
\end{definition}

For $e\in I$ and $j\in \mathbb{Z}$, let 
   \[
      \twosilt_e^j \Lambda := \{T \in \twosilt \Lambda \mid g_e(T) =j\}. 
   \]
In addition, let 
\[
   \twosilt_e^{>0} \Lambda := \{T \in \twosilt \Lambda \mid g_e(T) >0\} \quad \text{and} \quad 
   \twosilt_e^{<0} \Lambda := \{T \in \twosilt \Lambda \mid g_e(T) <0\}. 
\]

\begin{proposition} \label{prop:zero}
For any $e\in I$, the set $\twosilt_e^0 \Lambda$ is empty.
\end{proposition}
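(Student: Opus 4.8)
The plan is to show that a $2$-silting complex $T$ whose $g$-vector has an $e$-th coordinate equal to zero cannot generate $\Kb(\proj\Lambda)$ as a thick subcategory, contradicting the silting condition. The key invariant is the $g$-vector, which I would like to treat as an element of the Grothendieck group $K_0(\Kb(\proj\Lambda)) \cong \mathbb{Z}^{I}$, where the class $[P_f]$ of each indecomposable projective forms the standard basis. A basic $2$-silting complex $T = \bigoplus_{f \in I} T_f$ has exactly $|I|$ indecomposable summands, and a standard fact (going back to Aihara--Iyama for silting complexes, and valid in the Krull--Schmidt triangulated category $\Kb(\proj\Lambda)$ since $k[[t]]$ is complete local noetherian) is that the classes $\{[T_f]\}_{f\in I}$ form a $\mathbb{Z}$-basis of $K_0(\Kb(\proj\Lambda))$ — equivalently, the $g$-matrix $G(T)$ whose rows are the $g$-vectors $g(T_f)$ is invertible over $\mathbb{Z}$, i.e. $\det G(T) = \pm 1$.

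First I would record this $\mathbb{Z}$-basis property precisely, citing whichever form of it is available in the cited literature (this is where I would lean on the results assembled in Section~\ref{sec:preliminaries}, or invoke \cite{AIR}-type statements directly). Next, suppose for contradiction that $T \in \twosilt_e^0 \Lambda$, so $g_e(T) = 0$; here I need to be careful about the indexing. The notation $\twosilt_e^j\Lambda$ refers to the $e$-th coordinate of the $g$-vector of the whole complex $T$, which is $\sum_{f\in I} g(T_f)$ summed as vectors — but actually for the purposes of this proposition the cleaner route is: among the $|I|$ indecomposable summands $T_f$, consider their $g$-vectors as rows of $G(T)$. The column of $G(T)$ indexed by $e$ records the $e$-th coordinate $g_e(T_f)$ of each summand. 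The claim $\twosilt_e^0\Lambda = \emptyset$ should be read as: it is impossible that the $e$-th coordinate of $g(T) = \sum_f g(T_f)$ vanishes. Hmm — but that sum can certainly vanish for silting complexes in general. So the intended statement must instead be about a single indecomposable-free reading; rereading the setup, $g(T)$ for a two-term complex is defined coordinatewise by multiplicities in $T^0$ minus $T^{-1}$, and for a $2$-silting complex each coordinate is typically nonzero. The right argument: if $g_e(T)=0$ then the summand $T_e$ (the one "at position $e$", via the bijection $I \leftrightarrow$ summands given by the basis property) would have to be both a summand of $\Lambda$ in degree $0$ and cancel, forcing $T_e$ to be a complex $0 \to 0$, contradicting that $T$ has $|I|$ nonzero indecomposable summands.

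**The hard part will be** pinning down exactly which reading of $g_e(T)$ makes the statement true and then extracting the contradiction cleanly — concretely, showing that $g_e(T)=0$ forces one of the indecomposable summands of $T$ to be acyclic (hence zero in $\Kb(\proj\Lambda)$), so that $T$ has strictly fewer than $|I|$ nonzero indecomposable summands and therefore cannot be silting. I expect the mechanism to be: write the two-term complex $T \colon T^{-1} \xrightarrow{d} T^0$; a $2$-silting (equivalently, a support $\tau$-tilting pair in the sense of \cite{AIR} when $\Lambda$ is finite-dimensional, with the appropriate extension to the $k[[t]]$-setting) has the property that $d$ is a radical map and that the indecomposable projective $P_e$ appears in $T^{-1}$ if and only if it does not appear in $T^0$; combining this dichotomy with $g_e(T)=0$ (which says the multiplicities of $P_e$ in $T^0$ and $T^{-1}$ agree) forces both multiplicities to be $0$. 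But then no summand of $T$ involves $P_e$ at all, so $[T]$ lies in the proper direct summand $\bigoplus_{f\neq e}\mathbb{Z}[P_f]$ of $K_0$, so $\thick T \subsetneq \Kb(\proj\Lambda)$ — contradicting the silting condition. I would organize the write-up as: (i) the $\mathbb{Z}$-basis / $\det = \pm 1$ lemma; (ii) the dichotomy "$P_e$ in $T^0$ xor $P_e$ in $T^{-1}$" for $2$-silting complexes; (iii) combine to get the contradiction. Step (ii) is the one requiring the most care, since it is precisely where the silting (not merely presilting) hypothesis and the two-term structure interact, and where the passage from the classical finite-dimensional statement to module-finite $k[[t]]$-algebras needs a sentence of justification.
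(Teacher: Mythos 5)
Your proposal is correct, and in fact it fills in a step that the paper's one-sentence proof leaves to the reader. The paper cites only \cite[Theorem 2.27]{AI}, namely that the classes $[T_f]$ of the indecomposable summands of a basic two-term silting complex $T$ form a $\mathbb{Z}$-basis of $K_0(\Kb(\proj\Lambda))$; but on its own this does not rule out $g_e(T)=\sum_f g_e(T_f)=0$, since an invertible integer matrix can have a column whose entries sum to zero. The extra ingredient, which you correctly isolate, is that for a minimal representative $T=(T^{-1}\xrightarrow{d}T^0)$ of a two-term presilting complex no indecomposable projective $P_e$ occurs in both $T^0$ and $T^{-1}$: if it did, the composite $T^{-1}\twoheadrightarrow P_e\hookrightarrow T^0$ would define a nonzero class in $\Hom_{\Kb}(T,T[1])$, because any null-homotopy $h^0 d - d h^{-1}$ lands in the radical while this map does not. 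With that ``not both'' dichotomy in hand, $g_e(T)=0$ forces $P_e$ to be absent from $T$ altogether, so $\thick T$ sits inside the proper thick subcategory generated by $\{P_f : f\neq e\}$, contradicting silting. One small layering point to tighten in your write-up: you state the dichotomy as an exclusive-or ($P_e$ in $T^{-1}$ iff not in $T^0$), which already presupposes that $P_e$ appears in $T$ — precisely what the Grothendieck-group step is there to provide; the clean division of labor is ``at most one'' from the presilting condition and ``at least one'' from the silting (generation) condition. Your remark that the passage to module-finite $k[[t]]$-algebras is harmless is also right, since $\proj\Lambda$ remains Krull--Schmidt (as $k[[t]]$ is complete local noetherian), so radical maps and minimal two-term representatives behave as in the finite-dimensional case.
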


\begin{proof}
   It follows from \cite[Theorem 2.27]{AI} that the set of indecomposable direct summands of a basic $2$-silting complex forms a basis of the Grothendieck group of the triangulated category $\Kb(\proj \Lambda)$.
\end{proof}

The following are basic properties of $2$-presilting complexes. 

\begin{proposition} \cite[Theorem 5.5]{AIR} \label{prop:g-inv}
If two $2$-presilting complexes $T, U$ satisfy $g(T)=g(U)$, then $T\cong U$.
\end{proposition}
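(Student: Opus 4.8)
The $g$-vector of a two-term complex only records the classes $[T^{0}]$ and $[T^{-1}]$ in $K_{0}(\proj\Lambda)$, so the statement visibly fails without the presilting hypothesis; the plan is to use presilting first to recover the two \emph{terms} of the complex from $g$, and then to recover its \emph{differential}. As a preliminary normalization, since $\Kb(\proj\Lambda)$ is Krull--Schmidt I replace $T$ and $U$ by their minimal models, i.e.\ by complexes with no direct summand $P\xrightarrow{\ \mathrm{id}\ }P$, equivalently whose differential is a radical morphism in $\proj\Lambda$. This affects neither the $g$-vector nor the presilting property, the minimal model is unique up to isomorphism of complexes, and any homotopy equivalence between minimal complexes is already an isomorphism of complexes. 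So I may assume $T=(T^{-1}\xrightarrow{d}T^{0})$ and $U=(U^{-1}\xrightarrow{d'}U^{0})$ are minimal and aim to produce an isomorphism of complexes.

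\textbf{Step 1 (the terms are determined by $g$).} For a two-term complex $T$, a direct computation in the homotopy category identifies $\Hom_{\Kb(\proj\Lambda)}(T,T[1])$ with $\Hom_{\Lambda}(T^{-1},T^{0})/\bigl(d\circ\End_{\Lambda}(T^{-1})+\End_{\Lambda}(T^{0})\circ d\bigr)$, so the presilting condition says that every morphism $T^{-1}\to T^{0}$ is of the form $da+bd$. I claim $T^{0}$ and $T^{-1}$ then have no common indecomposable direct summand. Indeed, suppose an indecomposable projective $P$ occurs as a summand of both; fix split maps $P\hookrightarrow T^{-1}\xrightarrow{\pi}P$ and $P\xrightarrow{\iota}T^{0}\twoheadrightarrow P$ with composites $\mathrm{id}_{P}$. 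The morphism $\iota\pi\colon T^{-1}\to T^{0}$ equals $da+bd$ for some $a\in\End_{\Lambda}(T^{-1})$, $b\in\End_{\Lambda}(T^{0})$; composing with the projection $T^{0}\twoheadrightarrow P$ on the left and the inclusion $P\hookrightarrow T^{-1}$ on the right gives $\mathrm{id}_{P}$ on one side and, on the other, a sum of two morphisms each factoring through the radical morphism $d$, hence lying in $\rad\End_{\Lambda}(P)$ — impossible, since $\End_{\Lambda}(P)$ is local. As $[T^{0}]-[T^{-1}]=g(T)$ in the free abelian group $K_{0}(\proj\Lambda)=\bigoplus_{e\in I}\mathbb{Z}[P_{e}]$, this forces $T^{0}\cong\bigoplus_{g_{e}(T)>0}P_{e}^{\,g_{e}(T)}$ and $T^{-1}\cong\bigoplus_{g_{e}(T)<0}P_{e}^{\,-g_{e}(T)}$. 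Applying the same to $U$, we obtain $T^{0}\cong U^{0}=:Q^{+}$ and $T^{-1}\cong U^{-1}=:Q^{-}$.

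\textbf{Step 2 (the differential is determined), and the main obstacle.} It remains to show that the two radical morphisms $d,d'\colon Q^{-}\to Q^{+}$ lie in a single orbit of $\mathrm{Aut}(Q^{-})\times\mathrm{Aut}(Q^{+})$, which will give an isomorphism $T\cong U$ of complexes. The natural route is to pass to homology: since $d,d'$ are radical, $Q^{+}\twoheadrightarrow M:=H^{0}(T)$ and $Q^{+}\twoheadrightarrow N:=H^{0}(U)$ are projective covers, $M$ and $N$ are $\tau$-rigid, and by the Adachi--Iyama--Reiten correspondence a two-term presilting complex is recovered from the $\tau$-rigid pair $\bigl(H^{0}(\,\cdot\,),\text{its projective part}\bigr)$, the complex being a minimal projective presentation of $H^{0}$ together with a shifted projective; thus the claim reduces to the assertion that a $\tau$-rigid pair is determined up to isomorphism by its $g$-vector, and once $M\cong N$ and the projective parts agree, a mutually-inverse pair $M\rightleftarrows N$ lifts through the projective covers to chain maps $T\rightleftarrows U$ whose composites differ from the identities by endomorphisms inducing $0$ on $H^{0}$, hence lying in $\rad\End_{\Kb(\proj\Lambda)}(T)$ and $\rad\End_{\Kb(\proj\Lambda)}(U)$, so $T\cong U$. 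I expect this reduced statement to be the real difficulty — Step~1 only constrains the two terms — and it requires using $\tau$-rigidity a \emph{second} time (via Auslander--Reiten duality and a comparison of the $\Hom$-spaces $\Hom_{\Lambda}(M,N)$, $\Hom_{\Lambda}(N,M)$, turning the equality of $g$-vectors into an isomorphism $M\to N$). Alternatively, one could organize the whole argument around a Bongartz completion, using the fact invoked for Proposition~\ref{prop:zero} that the $g$-vectors of the indecomposable summands of a two-term silting complex form a $\mathbb{Z}$-basis of $K_{0}(\proj\Lambda)$; either way, pinning down the differential is where the bulk of the work lies.
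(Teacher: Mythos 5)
The paper does not prove this statement at all: it is quoted verbatim from \cite[Theorem 5.5]{AIR} and used as a black box, so there is no ``paper's proof'' to compare against. Evaluating your attempt on its own merits: Step~1 is correct and is indeed the standard first reduction (using the presilting condition on $\Hom(T,T[1])$ together with locality of $\End(P)$ to see that a minimal two-term presilting complex has no projective summand in common between degree $0$ and degree $-1$, whence the $g$-vector determines $T^0$ and $T^{-1}$ individually). The problem is Step~2, where the argument is not completed. You reduce the claim to the assertion that a $\tau$-rigid pair is determined up to isomorphism by its $g$-vector --- but under the AIR correspondence between two-term presilting complexes and $\tau$-rigid pairs, this is literally a restatement of the theorem you are trying to prove, so as written the reduction is circular. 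The two routes you sketch afterwards (comparing $\Hom_\Lambda(M,N)$ and $\Hom_\Lambda(N,M)$ via Auslander--Reiten duality, or organizing things around a Bongartz completion) are exactly where the real work in \cite{AIR} lives, and you do not actually carry either out; in particular, the lifting argument you describe (``a mutually-inverse pair $M\rightleftarrows N$ lifts through the projective covers'') presupposes $M\cong N$, which is the point at issue. You are upfront that ``pinning down the differential is where the bulk of the work lies,'' which is honest, but it means the proof as written has a genuine gap precisely at the non-trivial half of the statement.
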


\begin{proposition}\cite[Proposition 2.16]{Aihara13} \label{prop:partialsilt}
For a basic $2$-presilting complex $T$, the following hold.
\begin{enumerate}
   \item $T$ is a direct summand of some basic $2$-silting complex. 
   \item The following conditions are equivalent: 
\begin{enumerate}
   \item[(a)] $T$ is silting. 
   \item[(b)] The number of indecomposable direct summands of $T$ is $\#I$. 
\end{enumerate}
\end{enumerate}
\end{proposition}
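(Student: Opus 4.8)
The plan is to prove (1) by a Bongartz-type completion inside $\Kb(\proj\Lambda)$, and then to deduce (2) from (1) together with the fact already invoked in the proof of Proposition~\ref{prop:zero} (namely \cite[Theorem 2.27]{AI}): the indecomposable direct summands of a basic $2$-silting complex form a $\mathbb{Z}$-basis of $K_0(\Kb(\proj\Lambda))$, which is free abelian of rank $\#I$ with the classes $[P_e]$ $(e\in I)$ as a basis.

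For (1), write $\Lambda$ also for the stalk complex concentrated in degree $0$, and let $T$ be a basic $2$-term presilting complex. I would first record that left $\add T$-approximations exist in $\Kb(\proj\Lambda)$: since $\Lambda$ is module-finite over the complete local noetherian ring $k[[t]]$, every $\Hom$-space in $\Kb(\proj\Lambda)$ is a finitely generated $k[[t]]$-module, and $\add T$ has the single additive generator $T$, so $\add T$ is covariantly functorially finite. Choose a minimal left $\add T$-approximation $a\colon\Lambda\to T_0$ and complete it to a triangle
\[
   \Lambda \xrightarrow{\ a\ } T_0 \longrightarrow X \longrightarrow \Lambda[1].
\]
Since $\Lambda$ sits in degree $0$ and $T_0\in\add T$ is two-term, the cone $X$ is again two-term. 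The heart of the argument is to show that $T\oplus X$ is presilting, i.e.\ that $\Hom_{\Kb(\proj\Lambda)}(T\oplus X,(T\oplus X)[1])=0$ (for two-term complexes, vanishing at $i=1$ forces vanishing at all $i>0$); this splits into four checks. The vanishing of $\Hom(T,T[1])$ is the hypothesis; $\Hom(T,X[1])=0$ follows by applying $\Hom(T,-)$ to the triangle, using presilting of $T$ and $\Hom(T,\Lambda[2])=0$ (two-termness); $\Hom(X,T[1])=0$ follows by applying $\Hom(-,T[1])$ to the triangle and observing that the approximation property of $a$ makes $a^{\ast}\colon\Hom(T_0,T')\to\Hom(\Lambda,T')$ surjective for every $T'\in\add T$, so that the relevant cokernel vanishes; and $\Hom(X,X[1])=0$ then follows by applying $\Hom(X,-)$ to a rotation of the triangle, using the previous vanishing and $\Hom(X,\Lambda[2])=0$. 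Finally $\thick(T\oplus X)=\Kb(\proj\Lambda)$, because the triangle places $\Lambda$ in $\thick(T\oplus X)$ and $\thick(\Lambda)=\Kb(\proj\Lambda)$. Thus $T\oplus X$ is $2$-silting, and a basic version of it is a basic $2$-silting complex having $T$ as a direct summand (since $T$ is already basic), which proves (1).

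For (2), the implication (a)$\Rightarrow$(b) is immediate: if $T$ is basic $2$-silting, its indecomposable summands form a basis of $K_0(\Kb(\proj\Lambda))\cong\mathbb{Z}^{\#I}$, hence there are exactly $\#I$ of them. For (b)$\Rightarrow$(a), use (1) to embed $T$ as a direct summand of a basic $2$-silting complex $U$; by (a)$\Rightarrow$(b), the complex $U$ has exactly $\#I$ indecomposable summands, the same number as $T$, so $\add T=\add U$ and therefore $T\cong U$ is $2$-silting.

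I expect the main obstacle to be the presilting verification in (1): one must be careful about which rotation of the defining triangle to feed into each $\Hom$-functor, and in particular $\Hom(X,T[1])=0$ genuinely uses the approximation property of $a$ (not merely that $T$ is presilting), while the vanishings involving $\Lambda[2]$ rely on two-termness. A secondary point to check with care is the existence of minimal left $\add T$-approximations in this $k[[t]]$-linear rather than finite-dimensional setting, where Krull--Schmidtness of $\Kb(\proj\Lambda)$ over the complete local ring $k[[t]]$ is what makes the argument go through.
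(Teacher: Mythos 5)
The paper does not prove this proposition; it simply cites it as \cite[Proposition 2.16]{Aihara13}, so there is no proof in the paper to compare against. Your Bongartz-type completion argument is the standard route to this statement and it is correct: the triangle $\Lambda\to T_0\to X\to \Lambda[1]$ built from a minimal left $\add T$-approximation produces a two-term complex $X$ with $T\oplus X$ silting, and the four vanishing checks are handled properly (in particular, $\Hom(X,T'[1])\cong \mathrm{coker}(a^*\colon\Hom(T_0,T')\to\Hom(\Lambda,T'))=0$ for $T'\in\add T$ is exactly where the approximation property, rather than presilting alone, is used, and the two vanishings against $\Lambda[2]$ use two-termness). Your part (2) deduction from (1) and the rank-$\#I$ Grothendieck group basis of \cite[Theorem 2.27]{AI} is also the usual argument. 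The one point worth stating slightly more carefully is the existence of the left $\add T$-approximation of $\Lambda$: you attribute it to functorial finiteness via the completeness of $k[[t]]$, and the cleanest justification is that $\Hom(\Lambda,T)$ is a finitely generated module over the noetherian ring $k[[t]]$, hence over $\End(T)^{\rm op}$, which gives a left $\add T$-approximation, and then Krull--Schmidtness of $\Kb(\proj\Lambda)$ (valid because $k[[t]]$ is complete local noetherian, as the paper notes) allows one to pass to a minimal one.
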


Now, let ${\mathbf{G}}$ be a ribbon graph and $A_{\mathbf{G}}=\widehat{kQ_{\mathbf{G}}}/I_{\mathbf{G}}$ the complete gentle algebra of ${\mathbf{G}}$, which is module-finite over $k[[t]]$. 
We have a decomposition $A_{\mathbf{G}} = \bigoplus_{e\in G_1} P_e$, where $P_e$ is an indecomposable projective right $A_{\mathbf{G}}$-module corresponding to $e\in G_1$.

\begin{theorem} \label{thm:reduction}
Let $I_{\rm sc}$ be an ideal generated by all special cycles in $Q_{\mathbf{G}}$.
For any ideal $J$ contained in $I_{\rm sc}$, we have a bijection
\begin{equation}
   \twosilt (A_{\mathbf{G}}) \overset{\sim}{\longrightarrow} \twosilt (A_{\mathbf{G}}/J)
\end{equation} \label{eq:reduced}
that preserves $g$-vectors of complexes.
In particular, it gives a bijection
\begin{equation}
   \twosilt_e^j (A_{\mathbf{G}}) \overset{\sim}{\longrightarrow}\twosilt_e^j (A_{\mathbf{G}}/J) 
\end{equation}
for $e \in G_1$ and $j\in \mathbb{Z}$. 
\end{theorem}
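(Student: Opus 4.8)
The plan is to show that the base-change functor $F:=-\otimes_{A_{\mathbf G}}(A_{\mathbf G}/J)\colon\Kb(\proj A_{\mathbf G})\to\Kb(\proj(A_{\mathbf G}/J))$ restricts to a bijection on isomorphism classes of basic $2$-silting complexes, and that this bijection preserves $g$-vectors; the refined statement for $\twosilt_e^j$ is then immediate. First note that $A_{\mathbf G}/J$ is again module-finite over $k[[t]]$, so the results of Section \ref{sec:preliminaries} (in particular Propositions \ref{prop:zero}, \ref{prop:g-inv}, \ref{prop:partialsilt}) apply to it as well; moreover $F$ sends two-term complexes to two-term complexes, sends the indecomposable projective $P_e$ to the indecomposable projective $\overline{P_e}$, and visibly preserves $g$-vectors. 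By Proposition \ref{prop:g-inv}, $F$ is therefore injective on isomorphism classes of $2$-presilting complexes. Granting for the moment that $F$ restricts to a \emph{bijection} on $2$-presilting complexes, it then also preserves the number of indecomposable summands of such a complex: an indecomposable summand of a $2$-presilting complex is $2$-presilting with nonzero $g$-vector (otherwise it would be isomorphic to $0$ by Proposition \ref{prop:g-inv}), hence stays nonzero under $F$, and stays indecomposable by injectivity plus surjectivity of $F$ on $2$-presilting complexes. By the characterization in Proposition \ref{prop:partialsilt}(2), $F$ then restricts to a bijection on $2$-silting complexes, and the theorem follows. So everything reduces to the claim that $F$ induces a bijection between $2$-presilting complexes over $A_{\mathbf G}$ and over $A_{\mathbf G}/J$.

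For the easy direction I would argue as follows. For two-term complexes $T,U$, a morphism $T\to U[1]$ in $\Kb(\proj A_{\mathbf G})$ is just a homotopy class of homomorphisms $T^{-1}\to U^0$ (the chain-map condition being vacuous), and every homomorphism $\overline{T}^{\,-1}\to\overline{U}^{\,0}$ lifts along $A_{\mathbf G}\twoheadrightarrow A_{\mathbf G}/J$ to a homomorphism $T^{-1}\to U^0$ because $T^{-1}$ is projective. Hence the natural map $\Hom_{\Kb(\proj A_{\mathbf G})}(T,U[1])\to\Hom_{\Kb(\proj(A_{\mathbf G}/J))}(FT,FU[1])$ is surjective; in particular, if $T$ is $2$-presilting then so is $FT$. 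Conversely, given a basic $2$-presilting complex $\overline{T}=(\overline{T}^{\,-1}\xrightarrow{\bar f}\overline{T}^{\,0})$ over $A_{\mathbf G}/J$, lift $\bar f$ to a homomorphism $f\colon T^{-1}\to T^0$ of projective $A_{\mathbf G}$-modules with $F T^i=\overline{T}^{\,i}$; this produces a two-term complex $T=(T^{-1}\xrightarrow{f}T^0)$ with $FT\cong\overline{T}$, and it remains to prove that $T$ is $2$-presilting.

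This last point is the \textbf{main obstacle}. Using the surjectivity above together with $\Hom_{\Kb(\proj(A_{\mathbf G}/J))}(\overline{T},\overline{T}[1])=0$, one sees that $\Hom_{\Kb(\proj A_{\mathbf G})}(T,T[1])$ is spanned by classes of homomorphisms $h\colon T^{-1}\to T^0$ with $\Image(h)\subseteq T^0 J$, so what must be shown is that every such $h$ is null-homotopic, i.e. of the form $fa+bf$. Here the hypothesis $J\subseteq I_{\rm sc}$ is genuinely used: the weaker condition $J\subseteq\rad A_{\mathbf G}$ alone is insufficient (it fails already for $J=\rad\Lambda$ of the Kronecker algebra $\Lambda$, where $\rad^2\Lambda=0$), so the point is that $A_{\mathbf G}$ is module-finite over the complete local ring $k[[t]]$ (Proposition \ref{prop:alg}(1)) and that $I_{\rm sc}$ is small with respect to this structure — concretely, that $A_{\mathbf G}/J$ remains module-finite over $k[[t]]$ with the same semisimple quotient modulo the Jacobson radical, so that $A_{\mathbf G}\twoheadrightarrow A_{\mathbf G}/J$ falls within the scope of the silting-reduction machinery of \cite{Kimura}; the lifting argument then follows the pattern of \cite{AIR,Aihara13}. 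I expect the bulk of the work to be (i) pinning down the precise relationship between $I_{\rm sc}$ and the $k[[t]]$-algebra structure on $A_{\mathbf G}$ (unwinding Definition \ref{def:QG} and Proposition \ref{prop:alg}(1)), and (ii) the abstract lifting lemma for $2$-presilting complexes along such a quotient. Granting these, $F$ is a $g$-vector-preserving bijection $\twosilt(A_{\mathbf G})\xrightarrow{\ \sim\ }\twosilt(A_{\mathbf G}/J)$, and restricting it to the locus $g_e=j$ yields the second asserted bijection $\twosilt_e^j(A_{\mathbf G})\xrightarrow{\ \sim\ }\twosilt_e^j(A_{\mathbf G}/J)$.
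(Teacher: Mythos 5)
Your overall strategy --- reduce to silting reduction along the surjection $A_{\mathbf G}\twoheadrightarrow A_{\mathbf G}/J$, using the module-finite $k[[t]]$-algebra structure and the machinery of \cite{Kimura} --- is the right one; the paper's proof is in fact a direct invocation of \cite[Theorem 1.4]{Kimura} (with \cite[Corollary 5.20]{DIRRT} as a reference for the torsion-class analogue). Your scaffolding around this (base change preserves $g$-vectors, injectivity via Proposition \ref{prop:g-inv}, surjectivity of $\Hom(T,U[1])\to\Hom(FT,FU[1])$ by lifting along projectives, counting summands via Proposition \ref{prop:partialsilt}) is sound as far as it goes, and you correctly single out the one genuinely nontrivial step: showing that the lift of a $2$-presilting complex over $A_{\mathbf G}/J$ is $2$-presilting over $A_{\mathbf G}$.

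The gap is that you never identify the hypothesis under which that step actually goes through, and your guess at it is not the right one. You propose that what matters is that $A_{\mathbf G}/J$ stays module-finite over $k[[t]]$ with the same semisimple top; but your own Kronecker example already defeats this (both $\Lambda$ and $\Lambda/\rad\Lambda$ are finite-dimensional over $k$, hence module-finite over $k[[t]]$ with the same top, yet the conclusion fails). The condition that \cite[Theorem 1.4]{Kimura} and \cite[Corollary 5.20]{DIRRT} actually require is that $J$ annihilate every \emph{brick} over $A_{\mathbf G}$, i.e.\ $J$ lies in the intersection of annihilators of all modules with division endomorphism ring, so that $A_{\mathbf G}$ and $A_{\mathbf G}/J$ share the same bricks and hence the same (functorially finite) torsion classes and $2$-silting complexes. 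This is precisely where $J\subseteq I_{\rm sc}$ enters: every special cycle of $Q_{\mathbf G}$ acts as zero on every brick over $A_{\mathbf G}$ (a structural fact about these algebras), and moreover $t\in I_{\rm sc}$, which together with \cite[Proposition 7.13]{AY20} puts $A_{\mathbf G}$ in the module-finite complete local setting that \cite{Kimura} requires. Without the observation that special cycles kill all bricks, the obstacle you flag remains open; once it is supplied, the theorem follows immediately from the cited reduction theorem, and the $g$-vector preservation and the refinement to $\twosilt_e^j$ are indeed routine.
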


\begin{proof}
   By \cite[Proposition 7.13]{AY20}, $A_{\mathbf{G}}$ is module-finite over $k[[t]]$ with $t\in I_{{\rm sc}}$. 
   Since every special cycle annihilates all bricks over $A_{\mathbf{G}}$, we get the desired bijection (\ref{eq:reduced}) by \cite[Theorem 1.4]{Kimura}, (see also \cite[Corollary 5.20]{DIRRT}). 
   The latter assertion is clear from the former one.
\end{proof}

\begin{corollary} 
   Let $B_{\mathbf{G}}$ be the Brauer graph algebra of $\mathbf{G}$. 
   We have bijections
   \[
      \twosilt (A_{\mathbf{G}}) \overset{\sim}{\longrightarrow} \twotilt (B_{\mathbf{G}}) \quad \text{and} \quad \twosilt_e^j (A_{\mathbf{G}}) \overset{\sim}{\longrightarrow} \twotilt_e^j (B_{\mathbf{G}})
   \]
   for $e\in G_1$ and $j\in \mathbb{Z}$.
\end{corollary}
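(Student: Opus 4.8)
The plan is to realize $B_{\mathbf{G}}$ as a quotient of the complete gentle algebra $A_{\mathbf{G}}$ by an ideal lying inside $I_{\rm sc}$, apply Theorem \ref{thm:reduction}, and then upgrade ``$2$-silting'' to ``$2$-tilting'' on the $B_{\mathbf{G}}$-side by exploiting that $B_{\mathbf{G}}$ is symmetric.

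First I would note that, by Definition \ref{def:BGA}, $B_{\mathbf{G}} = A_{\mathbf{G}}/\overline{J_{\mathbf{G}}}$, where $\overline{J_{\mathbf{G}}}$ denotes the image in $A_{\mathbf{G}}$ of the ideal generated by the elements $C_{u,e}-C_{v,e}$ for $e\in G_1$ and $u,v\in s(e)$. Since each special cycle $C_{v,e}$ is, by Definition \ref{def:QG}, a genuine cycle in $Q_{\mathbf{G}}$, its image in $A_{\mathbf{G}}$ belongs to $I_{\rm sc}$; hence so does every generator $C_{u,e}-C_{v,e}$ of $\overline{J_{\mathbf{G}}}$, and therefore $\overline{J_{\mathbf{G}}}\subseteq I_{\rm sc}$. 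Applying Theorem \ref{thm:reduction} with $J=\overline{J_{\mathbf{G}}}$ then produces a $g$-vector preserving bijection $\twosilt(A_{\mathbf{G}})\overset{\sim}{\longrightarrow}\twosilt(B_{\mathbf{G}})$, which in particular restricts to bijections $\twosilt_e^j(A_{\mathbf{G}})\overset{\sim}{\longrightarrow}\twosilt_e^j(B_{\mathbf{G}})$ for all $e\in G_1$ and $j\in\mathbb{Z}$.

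Next I would show $\twosilt(B_{\mathbf{G}})=\twotilt(B_{\mathbf{G}})$, which upgrades the target of the above bijection from $2$-silting to $2$-tilting complexes (and likewise after imposing the condition $g_e=j$). By Proposition \ref{prop:alg}(2), $B_{\mathbf{G}}$ is a finite dimensional symmetric $k$-algebra, so the Nakayama functor on $\proj B_{\mathbf{G}}$ is isomorphic to the identity; consequently, for every $T\in\Kb(\proj B_{\mathbf{G}})$ and every $i\in\mathbb{Z}$ there is a natural isomorphism between $\Hom_{\Kb(\proj B_{\mathbf{G}})}(T,T[i])$ and the $k$-linear dual of $\Hom_{\Kb(\proj B_{\mathbf{G}})}(T,T[-i])$. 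Hence $T$ is presilting precisely when $\Hom_{\Kb(\proj B_{\mathbf{G}})}(T,T[i])=0$ for all $i\neq0$, so any basic $2$-silting complex for $B_{\mathbf{G}}$ is automatically $2$-tilting. Combining this with the bijections of the previous step yields the assertion.

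The whole argument is essentially bookkeeping once Theorem \ref{thm:reduction} is in hand, and I do not expect a genuine obstacle. The only points requiring care are the inclusion $\overline{J_{\mathbf{G}}}\subseteq I_{\rm sc}$ --- immediate because special cycles are honest cycles --- and the standard fact that silting and tilting complexes coincide over a symmetric algebra, which, if preferred, can simply be quoted from the literature (e.g.\ \cite{AI}) instead of being deduced from the Nakayama functor.
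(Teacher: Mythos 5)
Your argument is exactly the paper's: observe $J_{\mathbf{G}}\subseteq I_{\rm sc}$ so Theorem \ref{thm:reduction} gives a $g$-vector-preserving bijection $\twosilt(A_{\mathbf{G}})\to\twosilt(B_{\mathbf{G}})$, then use that silting and tilting complexes coincide over the finite-dimensional symmetric algebra $B_{\mathbf{G}}$. The only difference is that you spell out the Serre-duality/Nakayama-functor reason behind the latter fact, which the paper simply quotes from \cite{AI}.
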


\begin{proof}
   By \cite[Example 2.8]{AI}, silting complexes coincide with tilting complex for a finite dimensional symmetric algebra. As $J_{\mathbf{G}}\subseteq I_{\rm sc}$, we get the assertion by Theorem \ref{thm:reduction}.
\end{proof}

\section{A geometric model of $2$-silting complexes} \label{sec:AAC}
In \cite{AAC}, they classify all two-term tilting complexes over Brauer graph algebras by using the notion of signed walks on the associated ribbon graph. 
Thanks to Corollary \ref{cor:gentle}, one can apply their result to factor algebras of complete gentle algebras modulo ideals contained in $I_{\rm sc}$ too. 
An aim of this section is to give its geometric description on a plane tree (see \cite{AY20,PPP} for an arbitrary ribbon graphs).

\begin{definition}
A ribbon graph whose underlying graph is a tree is called \textit{plane tree}. 
The Brauer graph algebra corresponding to a plane tree is called \textit{Brauer tree algebra}. 
\end{definition}

Throughout this section, let $\mathbf{G}:=(G_0,G_1,s,\sigma)$ be a plane tree. 

\subsection{$2$-silting complexes via arcs}
For our purpose, we slightly modify notations and definitions in \cite{AAC}. 
We refer to \cite{Labourie13} for a canonical embedding of a ribbon graph into a closed oriented marked surface.

Our plane tree $\mathbf{G}$ is canonically embedded into the sphere $\mathbb{S}:=S^2$.  
More precisely, each of vertices is identified with a distinguished point called \textit{punctures} of $\mathbb{S}$, and each of edges is identified with a non-self-intersecting and pairwise non-intersecting line connecting its endpoints, and the cyclic ordering of the edges incident to each vertex is described as the counterclockwise direction around a puncture. Here, all curves of $\mathbb{S}$ are considered up to isotopy relative to punctures. 
In addition, it determines a ribbon graph $\mathbf{L}:=\mathbf{L}_{\mathbf{G}}$ embedded into a common surface $\mathbb{S}$ such that 
\begin{itemize}
   \item $\mathbf{L}$ has precisely one vertex, which lies on $\mathbb{S}\setminus \mathbf{G}$, and
   \item every edge of $\mathbf{L}$ is a loop, that intersects exactly one edge of $\mathbf{G}$, vise versa. We denote by $\alpha_e$ the loop of $\mathbf{L}$ intersecting with $e\in G_1$. 
\end{itemize}
Notice that the cyclic orientation of $\mathbf{L}$ is determined by that of $\mathbf{G}$. 
We call it the \textit{dual} of $\mathbf{G}$.

We find in Example \ref{ex:collections} an example of a plane tree with its dual. 
By our construction, the closure of each connected component of $\mathbb{S}\setminus \mathbf{L}$ is a polygon having precisely one puncture $v$ of $\mathbf{G}$ in its center. 
Up to cyclic permutation, its sides are labeled as 
\[
   \alpha_e, \alpha_{\sigma_v(e)}, \alpha_{\sigma^2_v(e)},\ldots, \alpha_{\sigma^{m-1}_v(e)}
\]
with no repetition in counterclockwise direction around $v$, where $e,\sigma_v(e),\ldots,\sigma^{m-1}_v(e)$ are edges appearing in the special cycle $C_{v,e}$ in this order. 
We denote this $m$-gon by $\triangle_v$. 

\[
   \begin{tikzpicture}
   \node(0) at (0,0) {\footnotesize$v$}; 
   \draw (0)--node[fill=white,inner sep=1]{\footnotesize$e$}(90:1.5) 
   (0)--node[fill=white,inner sep=1]{\footnotesize$\sigma_v(e)$}(162:1.5) 
   (0)--node[fill=white,inner sep=1]{\footnotesize$\sigma^2_v(e)$}(234:1.5) 
   (0)--node[fill=white,inner sep=1]{\footnotesize$\sigma^3_v(e)$}(306:1.5)  
   (0)--node[fill=white,inner sep=1]{\footnotesize$\sigma^4_v(e)$}(18:1.5);
   \draw(0) circle(1mm);
   \end{tikzpicture}
   \hspace{20mm}
   \begin{tikzpicture}
   \coordinate (q) at (-90:1.2);
   \coordinate (w) at (-162:1.2);
   \coordinate (e) at (-234:1.2);
   \coordinate (r) at (-306:1.2);
   \coordinate (t) at (-18:1.2);

   \draw[red] (q)--node[fill=white,inner sep=1]{\footnotesize$\alpha_{\sigma^2_v(e)}$}(w)--node[fill=white,inner sep=1]{\footnotesize$\alpha_{\sigma_v(e)}$}(e)--node[fill=white,inner sep=1]{\footnotesize$\alpha_{e}$}(r)--node[fill=white,inner sep=1]{\footnotesize$\alpha_{\sigma_v^4(e)}$}(t)--node[fill=white,inner sep=1]{\footnotesize$\alpha_{\sigma^3_v(e)}$}cycle;

   \draw[fill=black] (q)circle(0.6mm); 
   \draw[fill=black] (w)circle(0.6mm); 
   \draw[fill=black] (e)circle(0.6mm); 
   \draw[fill=black] (r)circle(0.6mm); 
   \draw[fill=black] (t)circle(0.6mm); 
   \node(0) at(0,0) {\tiny$v$};
   \draw(0) circle(1mm);

   \end{tikzpicture}
\]

\begin{definition} \label{def:arc}
   An \textit{arc} (more precisely \textit{{\bf G}-arc}) is a non-self-intersecting curve $\gamma$ of $\mathbb{S}$, considered up to isotopy relative to punctures, satisfying all the following conditions:
   \begin{itemize}
   \item $\gamma$ intersects at least one $\alpha_e$ of $\mathbf{L}$. 
   \item Each endpoint of $\gamma$ is a spiral around puncture of $\mathbf{G}$, either clockwise or counterclockwise. 
   \item Whenever $\gamma$ intersects with $\alpha_e$ of $\mathbf{L}$ for $e\in G_1$, the endpoints $u,v$ of $e$ lie on the opposite side of $\gamma$. 
   \end{itemize}
   Here, we consider that the point $v$ lies on the right (resp., left) to $\gamma$ if $\gamma$ circles clockwise (resp., counterclockwise) around $v$ in $\triangle_v$.
\end{definition}

\begin{definition}
   We say that two arcs are \textit{admissible} if they do not intersect. 
   A collection of arcs are said to be  
   \begin{itemize}
      \item \textit{admissible} if it consists of pairwise admissible arcs. 
      \item \textit{reduced} if it is admissible and consists of pairwise distinct arcs. 
      \item \textit{complete} if it is reduced and there are no reduced collection properly containing it.
   \end{itemize}
    We denote by $\mathcal{A}(\mathbf{G})$ the set of all complete collections of $\mathbf{G}$-arcs.
\end{definition}

Let $\gamma$ be a $\mathbf{G}$-arc. Using the notation in Definition \ref{def:arc}, let $p$ be an intersection point of $\gamma$ and $\alpha_e$ such that $\gamma$ leaves $\triangle_u$ to enter $\triangle_v$ via $p$. Then $p$ is said to be \textit{positive} (resp., \textit{negative}) if $u$ is to its right (resp., left), or equivalently, $v$ is to its left (resp., right). 
The \textit{$g$-vector} of $\gamma$ is defined by $g(\gamma)=(g_e(\gamma))_{e\in G_1}$, where 
\begin{eqnarray*}
   g_e(\gamma) := &\#\{\text{positive intersection points of $\gamma$ and $\alpha_e$}\} \\ &- \#\{\text{negative intersection points of $\gamma$ and $\alpha_e$}\}.
\end{eqnarray*}
By definition, if $p,q$ are two intersection points of $\gamma$ and $\alpha_e$, then both are positive or negative. This means that the absolute value $|g_e(\gamma)|$ is precisely the number of intersection points of $\gamma$ and $\alpha_e$.

\begin{figure}[htp]   \centering
   \renewcommand{\arraystretch}{2}
   \setlength{\tabcolsep}{3mm}
   \begin{tabular}{ccccccccc}
      $p$: positive & $p$: negative \\
   \begin{tikzpicture}[scale=0.9]
         \coordinate(u)at(0,0.8); \coordinate(d)at(0,-1);
         \coordinate(llu)at(-1.5,1); \coordinate(lu)at(-0.6,1); \coordinate(rru)at(1.5,1); \coordinate(ru)at(0.6,1);
         \coordinate(lld)at(-1.5,-1.2); \coordinate(ld)at(-0.6,-1.2); \coordinate(rrd)at(1.5,-1.2); \coordinate(rd)at(0.6,-1.2);
         \draw[blue,shift={(0,0.1)}](-1.6,0.8)..controls(-1.1,0.8)and(1.1,-0.8)..(1.6,-0.8);

         \draw[fill=white](-1.4,0)circle(1.3mm); \draw[fill=white](1.4,0)circle(1.3mm);
         \node[fill=white, inner sep=0.05] at(-1.4,0){\tiny$u$};
         \node[fill=white, inner sep=0.05] at(1.4,0){\tiny$v$};
         \draw (-1.25,0)--(1.25,0);
         \draw[red] (0,1)--(0,-0.9); 
         \draw[fill=black] (0,0.1)circle(0.6mm);
         \node at (0.2,0.2) {\footnotesize$p$};
         \node[red] at (0,-1.12) {$\alpha_e$};
   \end{tikzpicture}
   &
   \begin{tikzpicture}[scale=0.9]
      \coordinate(u)at(0,0.8); \coordinate(d)at(0,-1);
      \coordinate(llu)at(-1.5,1); \coordinate(lu)at(-0.6,1); \coordinate(rru)at(1.5,1); \coordinate(ru)at(0.6,1);
      \coordinate(lld)at(-1.5,-1.2); \coordinate(ld)at(-0.6,-1.2); \coordinate(rrd)at(1.5,-1.2); \coordinate(rd)at(0.6,-1.2);
      \draw[blue,shift={(0,-0.1)}](-1.6,-0.8)..controls(-1.1,-0.8)and(1.1,0.8)..(1.6,0.8);

      \draw[fill=white](-1.4,0)circle(1.3mm); \draw[fill=white](1.4,0)circle(1.3mm);
      \node[fill=white, inner sep=0.05] at(-1.4,0){\tiny$u$};
      \node[fill=white, inner sep=0.05] at(1.4,0){\tiny$v$};
      \draw (-1.25,0)--(1.25,0);
      \draw[red] (0,1)--(0,-0.9); 
      \draw[fill=black] (0,-0.1)circle(0.6mm);
      \node at (0.2,-0.2) {\footnotesize$p$};
      \node[red] at (0,-1.12) {$\alpha_e$};
   \end{tikzpicture}
   \end{tabular}
   \caption{a positive intersection (in the left) and a negative intersection point (in the right).}
   \end{figure}
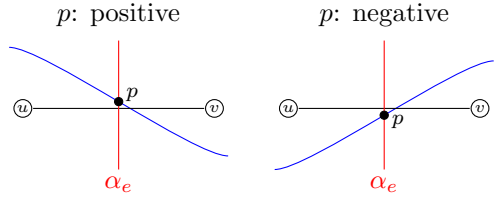

For $e\in G_1$ and $j\in \mathbb{Z}$, let 
\[
   \mathcal{A}(\mathbf{G})_e^j :=\{\mathcal{X}\in \mathcal{A}(\mathbf{G}) \mid g_e(\mathcal{X})=j\}, 
\]
where  $g(\mathcal{X}):=\sum_{\gamma\in \mathcal{X}} g(\gamma)$. In addition, let
\[
   \mathcal{A}(\mathbf{G})_e^{>0} :=\{\mathcal{X}\in \mathcal{A}(\mathbf{G}) \mid g_e(\mathcal{X})>0\} \quad \text{and} \quad 
   \mathcal{A}(\mathbf{G})_e^{<0} :=\{\mathcal{X}\in \mathcal{A}(\mathbf{G}) \mid g_e(\mathcal{X})<0\}.
\]

The following is a geometric interpretation of \cite[Theorem 4.6]{AAC} (see also \cite[Proposition 7.29]{AY20}). 
Suppose that $\gamma$ intersects with $\alpha_{e_1},\ldots, \alpha_{e_k}$ of $\mathbf{L}$ so that $p_{e_1},\ldots,p_{e_k}$ are intersection points of $\gamma$ and $\alpha_{e_1},\ldots, \alpha_{e_k}$ respectively, and $p_{e_{j+1}}$ is the next of $p_{e_j}$ for all $j\in \{1,\ldots,k-1\}$. Remember that $p_{e_{j+1}}$ is negative (resp., positive) if $p_{e_j}$ is positive (resp., negative). It determines a sequence $v_1,\ldots,v_{k+1}$ of  punctures of $\mathbf{G}$ with $s(e_i)=\{v_i,v_{i+1}\}$. 
We set a two-term complex $T_{\gamma} := (T^{-1}\overset{f}{\to} T^{0})$ by 
\[
   T^{0} := 
   \bigoplus_{\text{$p_{e_i}$:positive}} P_{e_{i}}, \quad 
   T^{-1}: = 
   \bigoplus_{\text{$p_{e_j}$:negative}} P_{e_{j}}
\]
and
\[
   f = (f_{ij})_{i,j\in\{1,\ldots,k\}}, \quad f_{ij}:=  
   \begin{cases}
   [e_{j-1}|e_{j}]_{v_j} &\text{if $i=j-1$} \\ 
   [e_{j+1}|e_{j}]_{v_{j+1}} &\text{if $i=j+1$} \\ 
      0  &\text{otherwise.}
   \end{cases}
\]
Here, $[e_{j-1}|e_{j}]_{v_j}$ (resp., $[e_{j+1}|e_{j}]_{v_{j+1}}$) is a homomorphism $P_{e_j} \to P_{e_{j-1}}$ (resp., $P_{e_j} \to P_{e_{j+1}}$) given by multiplying a sub-path of the special cycle $C_{v_j,e_{j-1}}$ (resp., $C_{v_{j+1},e_{j+1}}$) starting at $e_{j-1}$ (resp., at $e_{j+1}$) and ending at $e_{j}$.

\begin{theorem} \cite[Theorem 4.6]{AAC} \label{thm:AAC}
   The correspondence $\gamma \mapsto T_{\gamma}$ gives a bijection from the set of ${\mathbf{G}}$-arcs to the set of isomorphism classes of indecomposable $2$-pretilting complexes for $B_{\mathbf{G}}$.
   Moreover, it induces bijections 
   \[
      \mathcal{A}(\mathbf{G}) \overset{\sim}{\longrightarrow} \twotilt (B_{\mathbf{G}}) \quad \text{and} \quad \mathcal{A}(\mathbf{G})_e^j \overset{\sim}{\longrightarrow} \twotilt_e^j (B_{\mathbf{G}}) 
   \]
   for $e\in G_1$ and $j\in \mathbb{Z}$.
\end{theorem}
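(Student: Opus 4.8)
The plan is to deduce the theorem from the classification of \cite[Theorem 4.6]{AAC} by setting up a dictionary between $\mathbf{G}$-arcs on the sphere $\mathbb{S}$ and the signed walks on the plane tree $\mathbf{G}$ used there, and then checking that this dictionary is compatible with the assignment of two-term complexes, with the notion of admissibility, and with $g$-vectors. Given a $\mathbf{G}$-arc $\gamma$, I would record the edges $e_1,\ldots,e_k$ whose dual loops $\alpha_{e_i}$ are crossed by $\gamma$, listed in the order in which the intersection points $p_{e_1},\ldots,p_{e_k}$ occur along $\gamma$, together with the sign recording whether each $p_{e_i}$ is positive or negative. The third condition in Definition \ref{def:arc} forces the endpoints of $e_i$ and $e_{i+1}$ to share a puncture, so $(e_1,\ldots,e_k)$ is a walk in the sense of Definition \ref{d_graph-theory}; since consecutive intersection points carry opposite signs, the signs alternate, and the two spiralling ends of $\gamma$ supply the remaining data, so altogether $\gamma$ determines a signed walk. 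I would first prove that this assignment is a bijection onto signed walks taken up to reversal, the inverse being obtained by threading a curve successively through the polygons $\triangle_{v_1},\ldots,\triangle_{v_{k+1}}$ prescribed by the walk and closing it up with the required spirals; here one uses that each $\triangle_v$ is a once-punctured disc, so that the isotopy class of the portion of $\gamma$ inside it is determined by which two sides it meets and on which side of the puncture it passes. (For a tree every such walk is automatically simple, so no band-type objects arise.)

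Next I would match the complexes. Under the bijection above, $T^0_\gamma$ is the sum of the $P_{e_i}$ with $p_{e_i}$ positive and $T^{-1}_\gamma$ is the sum of those with $p_{e_i}$ negative, which is exactly the splitting of the signed walk used in \cite{AAC}; and the entries $f_{ij}$ of the differential are, by construction, sub-paths of the special cycles attached to the intermediate punctures $v_j$, hence agree with the differential of the AAC complex after passing to $B_{\mathbf{G}} = \widehat{kQ_{\mathbf{G}}}/(I_{\mathbf{G}}+J_{\mathbf{G}})$. It follows that $g(\gamma)=g(T_\gamma)$ in the sense of Definition \ref{def:g-vector}, since the multiplicity of $P_e$ in $T^0_\gamma$ (resp.\ in $T^{-1}_\gamma$) is precisely the number of positive (resp.\ negative) intersection points of $\gamma$ with $\alpha_e$. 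Combined with \cite[Theorem 4.6]{AAC}, this establishes the asserted bijection between $\mathbf{G}$-arcs and isomorphism classes of indecomposable $2$-pretilting complexes for $B_{\mathbf{G}}$, compatibly with $g$-vectors.

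The step I expect to be the main obstacle is the passage from single arcs to collections: one must show that two $\mathbf{G}$-arcs are admissible, i.e.\ can be realised disjointly up to isotopy, if and only if the two associated signed walks are compatible in the sense of \cite{AAC}, equivalently that $T_\gamma \oplus T_\delta$ is again $2$-presilting. One implication is the general principle that a nonzero homotopy class between the two complexes in a positive degree forces the curves to cross; for the converse I would analyse, polygon by polygon, how the segments of the two arcs inside each $\triangle_v$ can be pushed off one another, and translate the remaining obstruction into the combinatorial overlap (``kissing'') condition of \cite{AAC}. Granting this, a complete collection of $\mathbf{G}$-arcs corresponds exactly to a maximal compatible family of signed walks, which by \cite[Theorem 4.6]{AAC} is precisely a basic $2$-tilting complex for $B_{\mathbf{G}}$; on both sides maximality is controlled by the common cardinality $\#G_1$ via Proposition \ref{prop:partialsilt}. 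Restricting the bijection to the fibres on which $g_e$ equals a fixed integer $j$ then yields $\mathcal{A}(\mathbf{G})_e^j \overset{\sim}{\longrightarrow} \twotilt_e^j(B_{\mathbf{G}})$, which completes the proof.
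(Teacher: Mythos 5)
Your proposal follows essentially the same route as the paper: establish a bijection between $\mathbf{G}$-arcs and signed walks on $\mathbf{G}$ (this is the paper's Proposition \ref{prop:sw}), check that the arc-complexes $T_\gamma$ agree with the Adachi--Aihara--Chan construction from signed walks (with matching $g$-vectors), and verify that admissibility of arcs translates to compatibility of signed walks so that \cite[Theorem 4.6]{AAC} can be invoked. The paper's proof is terser, explicitly delegating the first step to Proposition \ref{prop:sw} and the admissibility check to a ``one can check'' remark, whereas you spell out the differential-matching and the polygon-by-polygon disjointness analysis that the paper leaves to the reader; this is a reasonable and more detailed version of the same argument.
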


\begin{proof}
   In Section \ref{sec:sw}, we find in Proposition \ref{prop:sw} that there is a bijection between the set of $\mathbf{G}$-arcs and the set of signed walks on $\mathbf{G}$. Under this correspondence, one can check that the notion of admissibility on arcs coincide with that on signed walks (see \cite[Section 2]{AAC} for the definition of admissible collections of signed walks). Then we get the desired bijection by Theorem \cite[Theorem 4.6]{AAC}. 
\end{proof}

By Theorem \ref{thm:reduction}, we slightly generalize the above result. 

\begin{corollary} \label{cor:gentle}
   Let $\mathbf{G}$ be a plane tree and $A_{\mathbf{G}}$ the complete gentle algebra of $\mathbf{G}$. 
   For any ideal $J$ contained in $I_{\rm sc}$, we have bijections 
   \[
      \mathcal{A}(\mathbf{G}) \overset{\sim}{\longrightarrow} \twosilt (A_{\mathbf{G}}/J) \quad \text{and} \quad \mathcal{A}(\mathbf{G})_e^j \overset{\sim}{\longrightarrow} \twosilt_e^j (A_{\mathbf{G}}/J)
   \]
   for $e\in G_1$ and $j\in \mathbb{Z}$.
\end{corollary}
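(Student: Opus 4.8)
The plan is to deduce Corollary \ref{cor:gentle} by combining Theorem \ref{thm:AAC} with the reduction result Theorem \ref{thm:reduction}. The key observation is that the two bijections in Theorem \ref{thm:AAC} are stated for the Brauer tree algebra $B_{\mathbf{G}}$, while Theorem \ref{thm:reduction} (and its Corollary) relate $\twosilt(A_{\mathbf{G}})$, $\twosilt(A_{\mathbf{G}}/J)$, and $\twotilt(B_{\mathbf{G}})$ compatibly with $g$-vectors; so it suffices to glue these bijections together.

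First I would fix an ideal $J$ contained in $I_{\rm sc}$. By Theorem \ref{thm:reduction}, the canonical surjection $A_{\mathbf{G}} \twoheadrightarrow A_{\mathbf{G}}/J$ induces a bijection $\twosilt(A_{\mathbf{G}}) \overset{\sim}{\to} \twosilt(A_{\mathbf{G}}/J)$ that preserves $g$-vectors, hence restricts to bijections $\twosilt_e^j(A_{\mathbf{G}}) \overset{\sim}{\to} \twosilt_e^j(A_{\mathbf{G}}/J)$ for every $e\in G_1$ and $j\in\mathbb{Z}$. Applying the same theorem with $J = J_{\mathbf{G}}$ (legitimate since $J_{\mathbf{G}}\subseteq I_{\rm sc}$ and $A_{\mathbf{G}}/J_{\mathbf{G}} = B_{\mathbf{G}}$), and using that silting complexes coincide with tilting complexes over the finite-dimensional symmetric algebra $B_{\mathbf{G}}$ (as recorded in the corollary to Theorem \ref{thm:reduction}), we obtain $g$-vector-preserving bijections $\twosilt(A_{\mathbf{G}}) \overset{\sim}{\to} \twotilt(B_{\mathbf{G}})$ and $\twosilt_e^j(A_{\mathbf{G}}) \overset{\sim}{\to} \twotilt_e^j(B_{\mathbf{G}})$.

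Composing, we get bijections $\twotilt(B_{\mathbf{G}}) \overset{\sim}{\to} \twosilt(A_{\mathbf{G}}/J)$ and $\twotilt_e^j(B_{\mathbf{G}}) \overset{\sim}{\to} \twosilt_e^j(A_{\mathbf{G}}/J)$. Finally, precomposing with the bijections $\mathcal{A}(\mathbf{G}) \overset{\sim}{\to} \twotilt(B_{\mathbf{G}})$ and $\mathcal{A}(\mathbf{G})_e^j \overset{\sim}{\to} \twotilt_e^j(B_{\mathbf{G}})$ from Theorem \ref{thm:AAC} yields the desired bijections $\mathcal{A}(\mathbf{G}) \overset{\sim}{\to} \twosilt(A_{\mathbf{G}}/J)$ and $\mathcal{A}(\mathbf{G})_e^j \overset{\sim}{\to} \twosilt_e^j(A_{\mathbf{G}}/J)$, and the compatibility with $g$-vectors along the way guarantees that the graded versions restrict correctly.

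There is essentially no obstacle here: the corollary is purely formal given the two inputs. The one point that deserves a sentence of care is checking that all the bijections in sight genuinely preserve $g$-vectors on the nose (not just cardinalities), so that the restriction to the fixed-$g_e$ pieces is well-defined; this is exactly what Theorem \ref{thm:reduction} asserts for the reduction maps, and what Theorem \ref{thm:AAC} asserts for $\gamma\mapsto T_\gamma$, so the composite inherits it. The proof is therefore a one-line invocation: ``Combine Theorem \ref{thm:AAC} with Theorem \ref{thm:reduction}.''
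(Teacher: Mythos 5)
Your proof is correct and matches the paper's argument exactly: the paper's own proof is the one-liner ``The assertion is immediate from Theorems \ref{thm:reduction} and \ref{thm:AAC},'' and your write-up simply spells out the composition of $g$-vector-preserving bijections that this invocation implicitly uses.
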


\begin{proof}
   The assertion is immediate from Theorems \ref{thm:reduction} and \ref{thm:AAC}.
\end{proof}

\begin{example}\label{ex:collections} 
\begin{enumerate}
\item The following figure describes a plane tree having $2$ edges and its dual: 
\[
   \scalebox{0.9}{
   \begin{tikzpicture}
   \coordinate(a) at (0,0);
   \coordinate(b) at (1.5,0);
   \coordinate(c) at (3,0);

   \draw (a)--node[fill=white,inner sep=1]{\small$1$}(b)--node[fill=white,inner sep=1]{\small$2$}(c);
      
   \draw[fill=white](a)circle(0.6mm);
   \draw[fill=white](b)circle(0.6mm);
   \draw[fill=white](c)circle(0.6mm);
   \node (a) at (-1,0) {$\bullet$}; 
   \draw[red] (a)..controls(1.7,1.2)and(1.7,-1.2)..(a);
   \draw[red] (a)..controls(1,2)and(2.5,1)..(2.5,0);
   \draw[red] (a)..controls(1,-2)and(2.5,-1)..(2.5,0);
   \node[red](l1) at (0.5,0.6) {\small$\alpha_1$};
   \node[red](l2) at (2.6,0.7) {\small$\alpha_2$};
   \end{tikzpicture} 
   }
\]
There are precisely $6$ complete collections of arcs on this plane tree, all of which are described as follows: 
\[
\scalebox{0.9}{
\renewcommand{\arraystretch}{1}
\setlength{\tabcolsep}{2mm}
\begin{tabular}{ccccccccc}
$A_1$ & $A_2$ & $A_3$ & $A_{-1}$& $A_{-2}$& $A_{-3}$ \\
\begin{tikzpicture}
   \draw[fill=white](-1,0)circle(0.6mm);
   \draw[fill=white](0,0)circle(0.6mm);
   \draw[fill=white](1,0)circle(0.6mm);
   
   \draw[blue](-0.5,0)..controls(-0.75,-0.1)and(-0.9,-0.15)..(-1,-0.15);
   \draw[blue](-1,-0.15)arc(-90:-270:1.5mm);
   \draw[blue](-1,0.15)arc(90:-90:1.2mm);
   
   \draw[blue](-0.5,0)..controls(-0.25,0.1)and(-0.1,0.15)..(0,0.15);
   \draw[blue](0,0.15)arc(90:-90:1.2mm)arc(-90:-270:0.9mm);

   \draw[blue] (-0.9,-0.1)..controls(-0.35,0.1)and(-0.1,0.6)..(0.6,0);
   \draw[blue](0.6,0)..controls(0.75,-0.1)and(0.9,-0.15)..(1,-0.15);
   \draw[blue](1,-0.15)arc(-90:90:1.5mm);
   \draw[blue](1,0.15)arc(90:270:1.3mm);
   \node (xx) at(0,-0.6) {};
   \node (xx) at(0,0.6) {};
\end{tikzpicture} 
&
\begin{tikzpicture}
   \draw[fill=white](-1,0)circle(0.6mm);
   \draw[fill=white](0,0)circle(0.6mm);
   \draw[fill=white](1,0)circle(0.6mm);

   \draw[blue](0.5,0)..controls(0.75,0.1)and(0.9,0.15)..(1,0.15);
   \draw[blue](1,0.15)arc(90:-90:1.5mm);
   \draw[blue](1,-0.15)arc(-90:-270:1.3mm);
   
   \draw[blue](0.5,0)..controls(0.25,-0.1)and(0.1,-0.15)..(0,-0.15);
   \draw[blue](0,-0.15)arc(-90:-270:1.2mm);
   
   \draw[blue](-0.5,0)..controls(-0.75,-0.1)and(-0.9,-0.15)..(-1,-0.15);
   \draw[blue](-1,-0.15)arc(-90:-270:1.5mm);
   \draw[blue](-1,0.15)arc(90:-90:1.3mm);
   
   \draw[blue](-0.5,0)..controls(-0.25,0.1)and(-0.1,0.15)..(0,0.15);
   \draw[blue](0,0.15)arc(90:-90:1.2mm);
   \node (xx) at(0,-0.6) {};
   \node (xx) at(0,0.6) {};

\end{tikzpicture} 
&
\begin{tikzpicture}
   \draw[fill=white](-1,0)circle(0.6mm);
   \draw[fill=white](0,0)circle(0.6mm);
   \draw[fill=white](1,0)circle(0.6mm);
   
   \draw[blue](0.5,0)..controls(0.75,-0.1)and(0.9,-0.15)..(1,-0.15);
   \draw[blue](1,-0.15)arc(-90:90:1.5mm);
   \draw[blue](1,0.15)arc(90:270:1.2mm);
   
   \draw[blue](0.5,0)..controls(0.25,0.1)and(0.1,0.15)..(0,0.15);
   \draw[blue](0,0.15)arc(90:270:1.2mm)arc(270:450:0.9mm);

   \draw[blue] (0.9,-0.1)..controls(0.35,0.1)and(0.1,0.6)..(-0.6,0);
   \draw[blue](-0.6,0)..controls(-0.75,-0.1)and(-0.9,-0.15)..(-1,-0.15);
   \draw[blue](-1,-0.15)arc(-90:-270:1.5mm);
   \draw[blue](-1,0.15)arc(-270:-450:1.3mm);
   \node (xx) at(0,-0.6) {};
   \node (xx) at(0,0.6) {};
\end{tikzpicture} 
&
\begin{tikzpicture}
   \draw[fill=white](-1,0)circle(0.6mm);
   \draw[fill=white](0,0)circle(0.6mm);
   \draw[fill=white](1,0)circle(0.6mm);
   
   \draw[blue](-0.5,0)..controls(-0.75,0.1)and(-0.9,0.15)..(-1,0.15);
   \draw[blue](-1,0.15)arc(90:270:1.5mm);
   \draw[blue](-1,-0.15)arc(-90:90:1.2mm);
   
   \draw[blue](-0.5,0)..controls(-0.25,-0.1)and(-0.1,-0.15)..(0,-0.15);
   \draw[blue](0,-0.15)arc(-90:90:1.2mm)arc(90:270:0.9mm);
   
   \draw[blue] (-0.9,0.1)..controls(-0.35,-0.1)and(-0.1,-0.6)..(0.6,0);
   \draw[blue](0.6,0)..controls(0.75,0.1)and(0.9,0.15)..(1,0.15);
   \draw[blue](1,0.15)arc(90:-90:1.5mm);
   \draw[blue](1,-0.15)arc(-90:-270:1.3mm);
   \node (xx) at(0,0.-0.6) {};
   \node (xx) at(0,0.6) {};
\end{tikzpicture} 
&
\begin{tikzpicture}
   \draw[fill=white](-1,0)circle(0.6mm);
   \draw[fill=white](0,0)circle(0.6mm);
   \draw[fill=white](1,0)circle(0.6mm);

   \draw[blue](-0.5,0)..controls(-0.75,-0.1)and(-0.9,-0.15)..(-1,-0.15);
   \draw[blue](-1,-0.15)arc(-90:-270:1.5mm);
   \draw[blue](-1,0.15)arc(90:-90:1.3mm);
   
   \draw[blue](-0.5,0)..controls(-0.25,0.1)and(-0.1,0.15)..(0,0.15);
   \draw[blue](0,0.15)arc(90:-90:1.2mm);
   
   \draw[blue](0.5,0)..controls(0.75,0.1)and(0.9,0.15)..(1,0.15);
   \draw[blue](1,0.15)arc(90:-90:1.5mm);
   \draw[blue](1,-0.15)arc(-90:-270:1.3mm);
   
   \draw[blue](0.5,0)..controls(0.25,-0.1)and(0.1,-0.15)..(0,-0.15);
   \draw[blue](0,-0.15)arc(-90:-270:1.2mm);
   \node (xx) at(0,-0.6) {};
   \node (xx) at(0,0.6) {};

\end{tikzpicture} 
& 
\begin{tikzpicture}
   \draw[fill=white](-1,0)circle(0.6mm);
   \draw[fill=white](0,0)circle(0.6mm);
   \draw[fill=white](1,0)circle(0.6mm);
   
   \draw[blue](-0.5,0)..controls(-0.75,0.1)and(-0.9,0.15)..(-1,0.15);
   \draw[blue](-1,0.15)arc(90:270:1.5mm);
   \draw[blue](-1,-0.15)arc(270:450:1.2mm);
   
   \draw[blue](-0.5,0)..controls(-0.25,-0.1)and(-0.1,-0.15)..(0,-0.15);
   \draw[blue](0,-0.15)arc(-90:90:1.2mm)arc(90:270:0.9mm);

   \draw[blue] (-0.9,0.1)..controls(-0.35,-0.1)and(-0.1,-0.6)..(0.6,0);
   \draw[blue](0.6,0)..controls(0.75,0.1)and(0.9,0.15)..(1,0.15);
   \draw[blue](1,0.15)arc(90:-90:1.5mm);
   \draw[blue](1,-0.15)arc(-90:-270:1.3mm);
   \node (xx) at(0,-0.6) {};
   \node (xx) at(0,0.6) {};
\end{tikzpicture} 
\end{tabular}
}
\]
Here, we have 
\[
   g(A_1) = (-2,1), \quad  g(A_2) = (-1,-1), \quad g(A_3) = (-1,2)
\]
and $g(A_{-i}) = -g(A_{i})$ for $i=1,2,3$.
\item For a plane tree 
\[
   \begin{tikzpicture}
   \coordinate(a) at (0,0);
   \coordinate(b) at (1.5,0);
   \coordinate(c) at (3,0);
   \coordinate(d) at (4.5,0);

   \draw (a)--node[fill=white,inner sep=1]{\small$1$}(b)--node[fill=white,inner sep=1]{\small$2$}(c)--node[fill=white,inner sep=1]{\small$3$}(d);

   \draw[fill=white](a)circle(0.6mm);
   \draw[fill=white](b)circle(0.6mm);
   \draw[fill=white](c)circle(0.6mm);
   \draw[fill=white](d)circle(0.6mm);
   \end{tikzpicture} 
\]
there are precisely $20$ complete collection of arcs. 
We describe in Figure \ref{fig:intro} the bijection in Theorem \ref{thm:AAC} between complete collections of arcs and $2$-tilting complexes for this plane tree.
\end{enumerate}
\end{example}

\subsection{Arcs and signed walks} \label{sec:sw}
In this subsection, we give a bijection between arcs and signed walks. 
Let $\mathbf{G}$ be a plane tree and $\mathbf{L}$ its dual, which are embedded into the sphere $\mathbb{S}$. Let $n=|\mathbf{G}|$ be the number of edges of $\mathbf{G}$. 

Recall that a signed walk is a walk $w=(e_1,\ldots,e_k)$ on $\mathbf{G}$ equipped with assignment of signs $\pm$ with $\epsilon(e_i)\neq \epsilon(e_{i+1})$ for all $i\in \{1,\ldots,k-1\}$.

\begin{proposition}  \label{prop:sw}
   For a plane tree $\mathbf{G}$, there are bijections between the following sets.
   \begin{enumerate}
      \item[(a)] The set of $\mathbf{G}$-arcs. 
      \item[(b)] The set of signed walks on $\mathbf{G}$. 
      \item[(c)] The set $\{\{u,v\} \mid u,v\in G_0,u\neq v\}\times\{\pm 1\}$.
      \end{enumerate}
   In particular, their cardinality are $n(n+1)$. 
\end{proposition}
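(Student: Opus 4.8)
The plan is to establish the two bijections (a)$\leftrightarrow$(b) and (b)$\leftrightarrow$(c) separately, and then read off the cardinality from (c). The bijection (b)$\leftrightarrow$(c) is essentially a counting observation about trees. In a tree there is a \emph{unique} walk (reduced path) between any two distinct vertices $u,v$, since a tree has no cycles and is connected; call it $w_{u,v}=(e_1,\ldots,e_k)$. The condition defining a signed walk --- an assignment of signs $\epsilon(e_i)\in\{\pm1\}$ with $\epsilon(e_i)\neq\epsilon(e_{i+1})$ for all $i$ --- is precisely the condition that the signs alternate along the walk, so such an assignment is determined by a single choice of $\epsilon(e_1)\in\{\pm1\}$. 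Hence a signed walk on a plane tree is the same datum as an unordered pair $\{u,v\}$ of distinct vertices together with one sign $\pm1$. (One should note that interchanging the roles of $u$ and $v$ reverses the walk, which simultaneously reverses the alternating pattern; the conventions in Definition~\ref{d_graph-theory} must be checked to see that this gives a consistent bijection with the \emph{unordered} pair --- that is the one bookkeeping point to be careful about.) This immediately gives $|(c)| = 2\binom{n+1}{2} = n(n+1)$, once we recall that $|G_0|=n+1$ for a tree with $n$ edges.

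For the bijection (a)$\leftrightarrow$(b), the idea is to extract a signed walk from a $\mathbf{G}$-arc $\gamma$ by recording the sequence of loops $\alpha_{e_1},\alpha_{e_2},\ldots,\alpha_{e_k}$ of $\mathbf{L}$ that $\gamma$ crosses, in order, and by recording the sign of each crossing point $p_{e_i}$ in the sense of the ``positive/negative intersection'' terminology defined just before Theorem~\ref{thm:AAC}. The key structural facts to verify are: (i) the edges $e_1,\ldots,e_k$ are pairwise distinct --- this uses that $|g_{e}(\gamma)|$ equals the number of intersection points of $\gamma$ with $\alpha_e$ together with the observation (stated in the excerpt) that all intersection points of $\gamma$ with a fixed $\alpha_e$ have the same sign, so if $\gamma$ met $\alpha_e$ twice the two crossings would have to be consecutive in the alternating pattern, which is impossible, hence each $\alpha_e$ is crossed at most once; (ii) consecutive crossings $p_{e_i},p_{e_{i+1}}$ lie in a common polygon $\triangle_{v_{i+1}}$, so $e_i$ and $e_{i+1}$ share the endpoint $v_{i+1}$, making $(e_1,\ldots,e_k)$ a genuine walk; and (iii) the sign of a crossing flips each time $\gamma$ passes from one polygon to the next, which is exactly the remark recalled in the text that $p_{e_{i+1}}$ is negative when $p_{e_i}$ is positive and vice versa --- so the recorded signs alternate, giving a signed walk. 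Conversely, given a signed walk one reconstructs $\gamma$ by drawing, inside each successive polygon $\triangle_{v_i}$, the unique (up to isotopy) arc segment entering through $\alpha_{e_{i-1}}$ and leaving through $\alpha_{e_i}$ while circling $v_i$ in the direction prescribed by the sign, and closing off the two ends as clockwise/counterclockwise spirals around $v_1$ and $v_{k+1}$ according to the first and last signs. One checks these two constructions are mutually inverse.

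I expect the main obstacle to be step (iii) above together with the verification that the reconstruction is well-defined up to isotopy: one must be sure that the data ``which loops are crossed, in which order, with which signs'' really pins down the isotopy class of a non-self-intersecting curve on the sphere, and that the spiral conventions at the two endpoints are forced by the adjacent signs (recall an endpoint that spirals clockwise around $v$ places $v$ on the right, matching the sign convention for the first/last crossing). This is where the geometry of the dual ribbon graph $\mathbf{L}$ and the polygon decomposition $\mathbb{S}\setminus\mathbf{L}=\bigsqcup_v\triangle_v$ does the real work; the combinatorial parts (i), (ii) and the tree-counting in (b)$\leftrightarrow$(c) are routine. Once all three bijections are in hand, the final count $n(n+1)$ follows from the explicit description of the set in (c), completing the proof.
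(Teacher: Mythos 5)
Your overall strategy matches the paper's: from a $\mathbf{G}$-arc $\gamma$ read off the ordered sequence of dual loops it crosses together with the sign $g_{e_i}(\gamma)\in\{\pm1\}$ of each crossing to obtain a signed walk, and then parametrize signed walks on a tree by unordered pairs of endpoints and a single choice of sign, giving $2\binom{n+1}{2}=n(n+1)$.

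However, your argument for point (i), that $e_1,\ldots,e_k$ are pairwise distinct, does not work as written. You claim that if $\gamma$ met $\alpha_e$ twice, ``the two crossings would have to be consecutive in the alternating pattern, which is impossible.'' The implication runs the wrong way: since all crossings of a fixed $\alpha_e$ share the same sign and the signs along $\gamma$ strictly alternate, two crossings of the same $\alpha_e$ must lie at positions of the \emph{same} parity and hence \emph{cannot} be consecutive --- there is no contradiction at that level, and crossings at distance $2,4,\ldots$ are not ruled out by parity alone. What the alternating-sign observation does give is only that $e_i\neq e_{i+1}$, i.e.\ the induced walk $(v_1,e_1,v_2,\ldots,e_k,v_{k+1})$ in $\mathbf{G}$ is non-backtracking. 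To pass from ``non-backtracking'' to ``simple'' one must invoke the tree hypothesis, which is exactly what the paper's one-line justification (``since $\mathbf{G}$ is a tree, $e_1,\ldots,e_k$ are pairwise distinct'') is doing: in a tree a non-backtracking walk is a simple path. (One can also argue topologically: on the sphere each dual loop $\alpha_e$ separates, so two same-signed crossings of $\alpha_e$ would force a crossing of the opposite sign in between, contradicting the constant-sign property.) So the step you describe as routine combinatorics is precisely where the tree hypothesis enters, and your stated reason for it is incorrect; points (ii), (iii), the reconstruction, and the final count are fine.
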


\begin{proof}
   We give a bijection from (a) to (b). 
   Let $\gamma$ be a $\mathbf{G}$-arc. Suppose that it intersects with $\alpha_{e_1},\ldots, \alpha_{e_k}$ of $\mathbf{L}$ so that $p_{e_1},\ldots,p_{e_k}$ are intersection points of $\gamma$ and $\alpha_{e_1},\ldots, \alpha_{e_k}$ respectively and $p_{e_{j+1}}$ is the next of $p_{e_j}$ for all $j\in \{1,\ldots,k-1\}$. Since $\mathbf{G}$ is a tree, $e_1,\ldots, e_k$ are pairwise distinct edges. To an arc $\gamma$, we associate the signed walk $w_{\gamma}$ consisting of a walk $(e_1,\ldots, e_k)$ and an assignment of signs $\epsilon(e_i):= g_{e_i}(\gamma) \in \{\pm1\}$ for all $i\in [1,k]$. 
   Conversely, one can check that every signed walk on a plane tree $\mathbf{G}$ can be obtained in this way. 
   Thus, the correspondence $\gamma\mapsto w_{\gamma}$ gives the desired bijection. 

   The bijection between (b) and (c) is clear since $\mathbf{G}$ is a tree. 
\end{proof}

\begin{lemma} \label{lem:abs}
   Every arc $\gamma$ satisfies $|g_e(\gamma)|\in \{0,\pm1\}$ for all $e\in G_1$. 
\end{lemma}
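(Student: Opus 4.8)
The plan is to reduce the lemma to a statement about geometric intersection numbers and then read it off from Proposition~\ref{prop:sw}. The first step is the reduction: by the observation recorded immediately before the lemma, $|g_e(\gamma)|$ is exactly the number of intersection points of $\gamma$ with the loop $\alpha_e$ (all of which have the same sign), so it suffices to prove that $\gamma$ meets each $\alpha_e$ in at most one point.

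For this I would use the description of $\gamma$ underlying Proposition~\ref{prop:sw}. Traversing $\gamma$ and recording, in order, the loops of $\mathbf{L}$ that it crosses produces a sequence $\alpha_{e_1},\dots,\alpha_{e_k}$, where $p_{e_i}\in\alpha_{e_i}$ is the crossing point lying between the consecutive polygons $\triangle_{v_i}$ and $\triangle_{v_{i+1}}$, and where these points exhaust $\gamma\cap\mathbf{L}$. The polygon-adjacency structure is precisely $\mathbf{G}$ (polygons $\leftrightarrow$ vertices, shared loop-sides $\leftrightarrow$ edges), so $(v_1,\dots,v_{k+1})$ together with $(e_1,\dots,e_k)$ is a walk on $\mathbf{G}$; since $\mathbf{G}$ is a tree this walk cannot repeat an edge, i.e.\ $e_1,\dots,e_k$ are pairwise distinct. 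Hence each loop $\alpha_e$ carries at most one of the $p_{e_i}$ and no further intersection point, so $\gamma$ meets $\alpha_e$ at most once and $|g_e(\gamma)|\le 1$. Since $g_e(\gamma)\in\mathbb{Z}$, this gives $|g_e(\gamma)|\in\{0,1\}$, as claimed.

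The only substantive point is the claim that the arc traversal yields a genuine walk — equivalently, that $\gamma$ never re-crosses a loop $\alpha_f$. If one does not wish to quote Proposition~\ref{prop:sw}, this is where the work lies: assuming $\gamma$ is in minimal position with respect to $\mathbf{L}$ (legitimate since arcs are considered up to isotopy), a repeated crossing of $\alpha_f$ forces $\gamma$ to enter and leave some polygon $\triangle_w$ through the same side, so the enclosed sub-disk $\Delta\subseteq\overline{\triangle_w}$ is either an honest bigon — isotope $\gamma$ across it to remove two intersections, contradicting minimality — or contains the puncture $w$, in which case one of the two crossings has the other endpoint of $f$ on the same side of $\gamma$ as $w$, contradicting the third condition in Definition~\ref{def:arc}. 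I expect this punctured-bigon case to be the main obstacle, as it needs a careful local picture of which side of $\gamma$ the puncture and the two adjacent polygons lie on; quoting Proposition~\ref{prop:sw}, where exactly this is subsumed in the phrase ``since $\mathbf{G}$ is a tree'', bypasses it and reduces the proof to a single line.
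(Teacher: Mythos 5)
Your proof matches the paper's approach: the paper's entire argument for this lemma is the remark ``It has already been shown in the previous discussion,'' which refers precisely to the two facts you isolate --- the observation just before the lemma that $|g_e(\gamma)|$ equals the number of intersections of $\gamma$ with $\alpha_e$, together with the line in the proof of Proposition~\ref{prop:sw} asserting that $e_1,\dots,e_k$ are pairwise distinct because $\mathbf{G}$ is a tree. Your closing paragraph correctly identifies where the nontrivial geometric content actually lives; the paper likewise subsumes it into that single clause of Proposition~\ref{prop:sw}.
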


\begin{proof}
   It has already shown in the precious discussion.
\end{proof}

\begin{lemma} \label{lem:g-invarc}
   If two admissible collections $\mathcal{X},\mathcal{Y}$ satisfy $g(\mathcal{X})=g(\mathcal{Y})$, then $\mathcal{X}=\mathcal{Y}$ holds.
\end{lemma}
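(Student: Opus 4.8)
The plan is to transport the statement to the algebraic side via the bijection $\gamma\mapsto T_\gamma$ of Theorem \ref{thm:AAC} and then to deduce it from the $g$-vector rigidity of $2$-presilting complexes (Proposition \ref{prop:g-inv}); in essence the lemma is the geometric shadow of that rigidity. The first point to record is that $\gamma\mapsto T_\gamma$ preserves $g$-vectors: by construction of $T_\gamma=(T_\gamma^{-1}\to T_\gamma^{0})$, the indecomposable projective $P_e$ occurs in $T_\gamma^{0}$ (resp.\ in $T_\gamma^{-1}$) with multiplicity the number of positive (resp.\ negative) intersection points of $\gamma$ and $\alpha_e$, so comparing with Definition \ref{def:g-vector} one gets $g(T_\gamma)=g(\gamma)$. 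Hence for an admissible collection $\mathcal{X}$ of $\mathbf{G}$-arcs, putting $T_{\mathcal{X}}:=\bigoplus_{\gamma\in\mathcal{X}}T_\gamma$ (a sum taken with multiplicities should $\mathcal{X}$ fail to be reduced) one has $g(T_{\mathcal{X}})=\sum_{\gamma\in\mathcal{X}}g(\gamma)=g(\mathcal{X})$.

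Next I would check that $T_{\mathcal{X}}$ is a $2$-presilting complex for $B_{\mathbf{G}}$. Since there are only finitely many $\mathbf{G}$-arcs (Proposition \ref{prop:sw}), the underlying set $\mathcal{X}'$ of $\mathcal{X}$ extends to a complete collection $\mathcal{Z}$; by Theorem \ref{thm:AAC} the complex $T_{\mathcal{Z}}=\bigoplus_{\gamma\in\mathcal{Z}}T_\gamma$ is $2$-tilting, hence $2$-presilting, and $2$-presilting complexes are closed under direct summands, so $T_{\mathcal{X}'}=\bigoplus_{\gamma\in\mathcal{X}'}T_\gamma$ is $2$-presilting. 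As $T_{\mathcal{X}}$ is a direct sum of copies of the summands $T_\gamma$ with $\gamma\in\mathcal{X}'$, the vanishing $\Hom_{\Kb(\proj B_{\mathbf{G}})}(T_{\mathcal{X}},T_{\mathcal{X}}[i])=0$ for $i>0$ is inherited from $T_{\mathcal{X}'}$, so $T_{\mathcal{X}}$ is again $2$-presilting.

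Finally, if $g(\mathcal{X})=g(\mathcal{Y})$ then $g(T_{\mathcal{X}})=g(T_{\mathcal{Y}})$, so Proposition \ref{prop:g-inv} gives $T_{\mathcal{X}}\cong T_{\mathcal{Y}}$. Since $\Kb(\proj B_{\mathbf{G}})$ is Krull--Schmidt and each $T_\gamma$ is indecomposable (Theorem \ref{thm:AAC}), the two complexes have the same multiset of indecomposable direct summands, and injectivity of $\gamma\mapsto T_\gamma$ then forces $\mathcal{X}=\mathcal{Y}$. I do not expect a real obstacle here: the work is entirely in the two bookkeeping facts above --- that the correspondence of Theorem \ref{thm:AAC} respects $g$-vectors and carries admissible collections to $2$-presilting complexes --- and both are already implicit in the set-up of that theorem.
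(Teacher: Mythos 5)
Your proof is correct and takes essentially the same route as the paper, which simply cites Proposition \ref{prop:g-inv}; you have filled in the implicit bookkeeping — that $\gamma\mapsto T_\gamma$ preserves $g$-vectors, that admissible collections land on $2$-presilting complexes, and that Krull--Schmidt plus the injectivity of $\gamma\mapsto T_\gamma$ recovers the collection from the complex.
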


\begin{proof}
   It follows from Proposition \ref{prop:g-inv}.
\end{proof}

\begin{proposition} \label{prop:partialarc}
   Every reduced collection is a subset of some complete collection of $\mathbf{G}$-arcs. 
   In particular, it is complete if and only if it has precisely $n$ elements.
\end{proposition}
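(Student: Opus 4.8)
The plan is to deduce both assertions from the correspondence $\gamma\mapsto T_\gamma$ of Theorem~\ref{thm:AAC} together with the general silting-theoretic facts recorded in Proposition~\ref{prop:partialsilt}; the extension statement itself is just finiteness.

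First, for the opening claim I would argue by finiteness. By Proposition~\ref{prop:sw} there are only $n(n+1)$ $\mathbf{G}$-arcs in total, so given a reduced collection $\mathcal{X}$ the family of reduced collections containing $\mathcal{X}$ is finite and nonempty, and therefore has a maximal element $\mathcal{Y}$. A reduced collection strictly containing $\mathcal{Y}$ would strictly contain $\mathcal{X}$ as well, contradicting maximality; hence $\mathcal{Y}$ is maximal among all reduced collections, that is, $\mathcal{Y}\in\mathcal{A}(\mathbf{G})$, and $\mathcal{X}\subseteq\mathcal{Y}$.

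Next, to see that a complete collection $\mathcal{X}$ has exactly $n$ arcs, I would use Theorem~\ref{thm:AAC}: the map $\gamma\mapsto T_\gamma$ is a bijection onto the indecomposable $2$-pretilting complexes of $B_{\mathbf{G}}$, and it induces the bijection $\mathcal{A}(\mathbf{G})\overset{\sim}{\longrightarrow}\twotilt(B_{\mathbf{G}})$ sending $\mathcal{X}$ to $T_{\mathcal{X}}:=\bigoplus_{\gamma\in\mathcal{X}}T_\gamma$ (this is the map compatible with $g$-vectors). For $\mathcal{X}$ complete, $T_{\mathcal{X}}$ is then a basic $2$-tilting, hence $2$-silting, complex; and since $\gamma\mapsto T_\gamma$ is injective with indecomposable values, Krull--Schmidt identifies the set of indecomposable direct summands of $T_{\mathcal{X}}$ with $\{T_\gamma\mid\gamma\in\mathcal{X}\}$, a set of cardinality $\#\mathcal{X}$. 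By Proposition~\ref{prop:partialsilt}(2), a basic $2$-silting complex for $B_{\mathbf{G}}=\bigoplus_{e\in G_1}P_e$ has precisely $\#G_1=n$ indecomposable summands, so $\#\mathcal{X}=n$.

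Finally, for the converse I would argue by contradiction: if $\mathcal{X}$ is reduced with $\#\mathcal{X}=n$ but not complete, then by the first paragraph $\mathcal{X}$ is properly contained in some complete collection $\mathcal{Y}$, while the previous paragraph forces $\#\mathcal{Y}=n=\#\mathcal{X}$ --- impossible. Hence $\mathcal{X}$ is complete. The whole argument is short once Theorem~\ref{thm:AAC} is available; the only step needing genuine care is the bookkeeping just described, namely matching the indecomposable direct summands of $T_{\mathcal{X}}$ with the arcs of $\mathcal{X}$ (via injectivity of $\gamma\mapsto T_\gamma$ and Krull--Schmidt uniqueness) so that Proposition~\ref{prop:partialsilt}(2) can be read as the statement that $\#\mathcal{X}=n$.
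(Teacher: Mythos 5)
Your proof is correct and, for the substantive claim that a complete collection has exactly $n$ elements, it is the paper's route: transport along the bijection of Theorem~\ref{thm:AAC}, identify the indecomposable summands of $T_{\mathcal{X}}$ with $\{T_\gamma\mid\gamma\in\mathcal{X}\}$ via injectivity of $\gamma\mapsto T_\gamma$ and Krull--Schmidt, and apply Proposition~\ref{prop:partialsilt}(2). The only divergence is the extension claim: the paper's one-line proof leans on Proposition~\ref{prop:partialsilt}(1) pulled back along the same bijection, whereas you replace this with a direct finiteness argument (finitely many arcs by Proposition~\ref{prop:sw}, so a maximal reduced collection containing $\mathcal{X}$ exists and is complete by definition), which is a slightly more elementary way to reach the same conclusion.
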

\begin{proof}
   It follows from Proposition \ref{prop:partialsilt}. 
\end{proof}

\begin{proposition} \label{prop:greater}
   For an edge $e$ and an integer $|j|\notin [1,n]$, the set $\mathcal{A}(\mathbf{G})_e^j$ is empty.
\end{proposition}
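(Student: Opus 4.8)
The plan is to bound $|g_e(\mathcal{X})|$ for a complete collection $\mathcal{X}\in\mathcal{A}(\mathbf{G})$ by $n=|\mathbf{G}|$ and to rule out $g_e(\mathcal{X})=0$ via the bijection of Theorem \ref{thm:AAC} together with Proposition \ref{prop:zero}. For the upper bound, recall from Lemma \ref{lem:abs} that each individual arc $\gamma$ satisfies $|g_e(\gamma)|\le 1$, with equality exactly when $\gamma$ crosses $\alpha_e$ (once, since $\mathbf{G}$ is a tree so an arc cannot cross the same loop twice with the same sign and Lemma \ref{lem:abs} forbids two crossings of opposite sign). Hence $|g_e(\mathcal{X})|=\bigl|\sum_{\gamma\in\mathcal{X}}g_e(\gamma)\bigr|\le\#\{\gamma\in\mathcal{X}\mid g_e(\gamma)\neq 0\}\le\#\mathcal{X}$. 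By Proposition \ref{prop:partialarc} a complete collection has exactly $n$ elements, so $|g_e(\mathcal{X})|\le n$, which gives emptiness of $\mathcal{A}(\mathbf{G})_e^j$ for $|j|>n$.

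First I would make the two reductions explicit. Under the bijection $\mathcal{A}(\mathbf{G})\xrightarrow{\ \sim\ }\twotilt(B_{\mathbf{G}})$ of Theorem \ref{thm:AAC}, which is $g$-vector preserving, and using that $\twotilt(B_{\mathbf{G}})\subseteq\twosilt(B_{\mathbf{G}})$ (indeed equals it, $B_{\mathbf{G}}$ being symmetric), the case $j=0$ follows at once from Proposition \ref{prop:zero}: $\twosilt_e^0(B_{\mathbf{G}})=\varnothing$, hence $\mathcal{A}(\mathbf{G})_e^0=\varnothing$. This handles $|j|=0$. So it remains only to treat $|j|>n$, where the combinatorial bound above applies directly.

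The one point that needs a careful word is the claim $|g_e(\gamma)|\le 1$ for every arc, i.e.\ that an arc crosses each loop $\alpha_e$ at most once. I would cite Lemma \ref{lem:abs}, noting that the discussion preceding Theorem \ref{thm:AAC} already records that all intersection points of $\gamma$ with a fixed $\alpha_e$ share the same sign, so $|g_e(\gamma)|$ counts the number of such crossings; combined with Lemma \ref{lem:abs} this number is $0$ or $1$. I do not expect a genuine obstacle here: the only mild subtlety is bookkeeping — making sure the inequality $\bigl|\sum_\gamma g_e(\gamma)\bigr|\le\#\mathcal{X}=n$ is stated with the right count of summands — and it is resolved by Lemma \ref{lem:abs} and Proposition \ref{prop:partialarc}. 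Thus the proposition reduces to these two already-available ingredients plus the $j=0$ case from Proposition \ref{prop:zero}.
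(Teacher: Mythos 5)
Your argument is correct and follows the paper's proof essentially verbatim: both use Proposition \ref{prop:zero} to rule out $j=0$, Proposition \ref{prop:partialarc} to get $\#\mathcal{X}=n$, and Lemma \ref{lem:abs} to bound $|g_e(\gamma)|\le 1$ per arc, yielding $|g_e(\mathcal{X})|\le n$. The extra care you take to justify $|g_e(\gamma)|\le 1$ (same-sign crossings, tree hypothesis) is consistent with the discussion in the paper preceding Lemma \ref{lem:abs}.
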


\begin{proof}
   By Proposition \ref{prop:zero}, the set $\mathcal{A}(\mathbf{G})_e^0$ is empty. 
   On the other hand, any complete collection $\mathcal{X}$ of $\mathbf{G}$-arcs consists of precisely $n$ elements by Proposition \ref{prop:partialarc}. So, $|g_e(\mathcal{X})|\leq n$ holds by Lemma \ref{lem:abs}.
\end{proof}

We end this subsection with the following observation on external edges. 
Now, we say that an edge of $\mathbf{G}$ is an \textit{external edge} if one of its endpoints is \textit{external}, that is, the number of edges incident to it is exactly $1$.

\begin{lemma} \label{lem:external}
Let $e$ be an external edge having external vertex $v$. 
For an arc $\gamma$, the following conditions are equivalent: 
\begin{enumerate}
   \item[(a)] $\gamma$ intersects with $\alpha_e$ positively (resp., negatively); 
   \item[(b)] $\gamma$ spirals around $v$ counterclockwise (resp., clockwise). 
   \item[(c)] $g_e(\gamma)=1$ (resp., $g_e(\gamma)=-1$).
\end{enumerate}
\end{lemma}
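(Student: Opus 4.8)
The plan rests on a single structural remark. Since $v$ is external, the only edge incident to $v$ is $e$, so $\sigma_v$ is trivial and the polygon $\triangle_v$ around $v$ is a $1$-gon whose unique side is $\alpha_e$. Equivalently, $\overline{\triangle_v}$ is a closed disk bounded by the loop $\alpha_e$ and having $v$ as its only interior puncture, and no loop of $\mathbf{L}$ other than $\alpha_e$ meets $\overline{\triangle_v}$. I would record this first; after that each implication is a short check against Definition~\ref{def:arc}, the formula defining $g_e(\gamma)$, and Lemma~\ref{lem:abs}.

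The equivalence (a)$\Leftrightarrow$(c) uses nothing about externality. By Lemma~\ref{lem:abs} the curve $\gamma$ meets $\alpha_e$ in at most one point, and any two intersection points of $\gamma$ with a fixed $\alpha_e$ carry the same sign; hence the formula for $g_e(\gamma)$ gives $g_e(\gamma)=1$ if $\gamma$ meets $\alpha_e$ at a positive point, $g_e(\gamma)=-1$ if it meets it at a negative point, and $g_e(\gamma)=0$ if $\gamma$ avoids $\alpha_e$. So (a) with sign positive is equivalent to $g_e(\gamma)=1$ and with sign negative to $g_e(\gamma)=-1$.

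The substance is (a)$\Leftrightarrow$(b), which I would split into ``$\gamma$ meets $\alpha_e$ $\Leftrightarrow$ $\gamma$ spirals around $v$'' plus a matching of signs. For the forward direction: if $\gamma$ meets $\alpha_e$, then by Lemma~\ref{lem:abs} it does so in a single transversal point $p$; the portion of $\gamma$ on the $\triangle_v$-side of $p$ enters $\mathrm{int}\,\triangle_v$ and can never leave it, since leaving would force a second crossing of $\partial\triangle_v=\alpha_e$, so that portion terminates inside $\triangle_v$; as every endpoint of an arc is a spiral around a puncture and $v$ is the only puncture of $\triangle_v$, the corresponding endpoint of $\gamma$ spirals around $v$. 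Conversely, if $\gamma$ spiralled around $v$ but were disjoint from $\alpha_e$, then $\gamma$ would be contained in the open disk bounded by $\alpha_e$, which meets no loop of $\mathbf{L}$, contradicting the requirement in Definition~\ref{def:arc} that an arc meet at least one $\alpha_f$. To match signs, given the crossing point $p$ I label the endpoints of $e$ so that $\gamma$ passes from $\triangle_u$ into $\triangle_v$ at $p$, with $v$ the external vertex of the lemma; then by the definition of positive/negative intersection points, $p$ is positive exactly when $v$ lies to the left of $\gamma$, and by the convention fixed in Definition~\ref{def:arc} this says precisely that $\gamma$ circles counterclockwise around $v$ in $\triangle_v$, while symmetrically $p$ is negative exactly when $\gamma$ circles clockwise around $v$.

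Stringing these together yields the chain ``$\gamma$ meets $\alpha_e$ positively $\Leftrightarrow$ $\gamma$ spirals counterclockwise around $v$ $\Leftrightarrow$ $g_e(\gamma)=1$'', and likewise with negative/clockwise/$-1$, which is the assertion. I do not expect a genuine obstacle. The one point that deserves a careful sentence is the topological dichotomy for a curve entering the disk $\triangle_v$ through its unique boundary arc $\alpha_e$ — it must either exit again, which Lemma~\ref{lem:abs} rules out, or terminate by spiralling around $v$ — together with the symmetric fact that spiralling around $v$ cannot be realised without crossing $\alpha_e$; both are immediate once $\overline{\triangle_v}$ is described as a once-punctured disk bounded by $\alpha_e$.
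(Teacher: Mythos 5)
Your proof is correct and takes essentially the same approach as the paper: the paper proves (a)$\Leftrightarrow$(b) by observing that $\triangle_v$ is a monogon bounded by the single loop $\alpha_e$, and (a)$\Leftrightarrow$(c) by invoking Lemma~\ref{lem:abs}, and you supply exactly these two ingredients with the topological details spelled out.
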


\begin{proof}
   The conditions (a) and (b) are equivalent since the polygon $\triangle_v$ is a monogon given by a loop $\alpha_e$. 
   On the other hand, (a) and (c) are equivalent by Lemma \ref{lem:abs}.
\end{proof}

\subsection{Flipping a plane tree}
We study a combinatorial operation called flip on ribbon graphs. It was introduced by Kauer \cite{Kauer98} and Aihara \cite{Aihara14} to describe tilting mutation of Brauer graph algebras. 

For our purpose, we define flips for plane trees here. Let $\mathbf{G}=(G_0,G_1,s,\sigma)$ be a plane tree.

\begin{definition} \label{def:flip}
For an edge $e$ of $\mathbf{G}$, let $\mathbf{G}'=(G_0',G_1',s',\sigma')$ be a plane tree defined as follows:  Let $a,b$ be endpoints of $e$. 
\begin{enumerate}
   \item If $e$ is an external edge having external vertex $a$, let $f:=\sigma_b(e)$ and $c$ its another endpoint. Then $\mathbf{G}'$ is given by the following data.
   \begin{itemize}
      \item The set $G_0'$ of vertices bijectively corresponds to $G_0$.  
      \item The set $G'_1$ of edges bijectively corresponds to $G_1$. 
      \item The map $s'$ is given by $s'(e):=\{a,c\}$ and $s'(d):=s(d)$ for all $e\neq d$. 
      \item The cyclic permutation $\sigma'=(\sigma'_v)_{v}$ is given by 
      \[
         \sigma'_b:=(\ldots, \sigma^{-3}_b(f), \sigma^{-2}_b(f), f, \sigma_b(f), \ldots), \quad \sigma'_c:=(\ldots, \sigma^{-1}_c(f),f, e, \sigma_c(f),\sigma^2_c(f),\ldots)
      \] 
      and $\sigma'_v:=\sigma_v$ for all $v\notin \{b,c\}$.
   \end{itemize}
   \item If $e$ is not external, let $f_v:=\sigma_v(e)$ and $c_v$ its another endpoint for $v\in \{a,b\}$. Then $\mathbf{G}'$ is given by the following data.
   \begin{itemize}
      \item The set $G_0'$ of vertices is bijectively correspond to $G_0$.  
      \item The set $G'_1$ of edges is bijectively correspond to $G_1$. 
      \item The map $s'$ is given by $s'(e):=\{c_a,c_b\}$ and $s'(d):=s(d)$ for all $e\neq d$. 
      \item The cyclic permutation $\sigma'=(\sigma'_v)_{v}$ is given by 
      \[
         \sigma'_v:=(\ldots, \sigma^{-3}_v(f_v), \sigma^{-2}_v(f_v), f_v, \sigma_v(f_v), \ldots), \quad \sigma'_{c_v}:=(\ldots, \sigma^{-1}_{c_v}(f_v),f_v, e, \sigma_{c_v}(f_v),\sigma^2_{c_v}(f_v),\ldots)
      \]
      for $v\in \{a,b\}$ and $\sigma'_v:=\sigma_v$ for all $v\notin \{a,b,c_a,c_b\}$.
   \end{itemize}
\end{enumerate} 
We denote this plane tree by $\mu_e(\mathbf{G})$ and call \textit{flip} of $\mathbf{G}$ at $e$. See Figure \ref{fig:flip}. 
\end{definition}

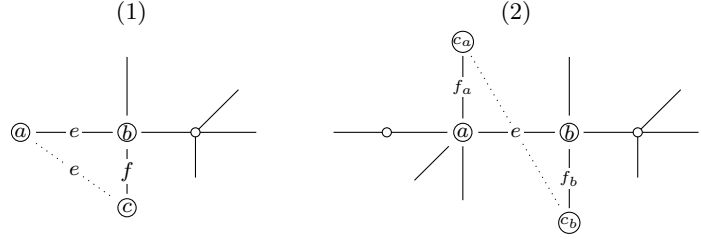
\begin{figure}[t]
\[
\setlength{\tabcolsep}{5mm}
\begin{tabular}{ccccccccc}
   \small (1) & \small (2) \\
   \begin{tikzpicture}[baseline=0mm]
      \node(aa) at (-2,0) {\footnotesize$a$};
      \node(b) at (-0.6,0) {\footnotesize$b$};
      \node(cc) at (-0.6,-1) {\footnotesize$c$};
      \coordinate(a) at (0.3,0);

      \draw (a)--(b); 
      \draw (a)--($(a)+(45:0.8)$) (a)--($(a)+(0:0.8)$) (a)--($(a)+(-90:0.6)$);
      \draw (b)--($(b)+(90:1)$) (b)--node[fill=white,inner sep=1]{\footnotesize$e$}(aa) (b)--node[fill=white,inner sep=1]{\footnotesize$f$}(cc); 

      \draw[fill=white](a)circle(0.6mm);
      \draw (aa)circle(1.2mm);
      \draw (b)circle(1.2mm);
      \draw (cc)circle(1.2mm);
      \draw[dotted] (aa)--node[fill=white,inner sep=1]{\footnotesize$e$}(cc);

   \end{tikzpicture}
   & 
   \begin{tikzpicture}[baseline=0mm]
      \node(aa) at (-2,0) {\footnotesize$a$};
      \node(b) at (-0.6,0) {\footnotesize$b$};
      \node(cc) at (-0.6,-1.2) {\tiny$c_b$};
      \node(cca) at (-2,1.2) {\tiny$c_a$};
      \coordinate(a) at (0.3,0);
      \coordinate(d) at (-3,0);

      \draw (a)--(b); 
      \draw (a)--($(a)+(45:0.8)$) (a)--($(a)+(0:0.8)$) (a)--($(a)+(-90:0.6)$);
      \draw (b)--($(b)+(90:1)$) (b)--(aa) (b)--node[fill=white,inner sep=1]{\tiny$f_b$}(cc); 
      \draw (aa)--node[fill=white,inner sep=1]{\tiny$f_a$}(cca);
      \draw (aa)--(d);
      \draw (d)--($(d)+(180:0.7)$);
      \draw (aa)--($(aa)+(-90:0.9)$);
      \draw (aa)--($(aa)+(-135:0.9)$);

      \draw[fill=white](a)circle(0.6mm);
      \draw[fill=white](d)circle(0.6mm);
      \draw (aa)circle(1.2mm);
      \draw (b)circle(1.2mm);
      \draw (cc)circle(1.4mm);
      \draw (cca)circle(1.4mm);

      \draw[dotted] (cca)--node[fill=white,inner sep=1]{\footnotesize$e$}(cc);

   \end{tikzpicture}
\end{tabular}
\]
\caption{A local figure for a flip at $e$}
\label{fig:flip}
\end{figure}

Now, we fix an edge $e$ of $\mathbf{G}$. Let $T$ be a direct sum of complexes
\begin{eqnarray}\nonumber
   \text{\small $0$-th} \quad \quad \text{\small $1$-st} \ \\ \nonumber
   X := \bigoplus_{d\notin G_1\setminus\{e\}} P_d \quad \text{and} \quad P'_e := \quad (P \ \overset{h}{\longrightarrow} \ P_e),
\end{eqnarray}
where $h$ is a minimal right $(\add X)$-approximation of $P_e$.   
By \cite[Theorem 2.2]{Aihara14}, this is a tilting complex such that
\[
   \End_{\Db(\module B_\mathbf{G})}(T) \cong B_{\mathbf{H}},
\]
where $\mathbf{H}:=\mu_e(\mathbf{G})$.
Thus, it induces a triangle equivalence 
\[
   {\mathbf{R}\mathrm{Hom}}(T,-) \colon \Db(\module B_{\mathbf{G}}) \overset{\sim}{\longrightarrow} \Db(\module B_\mathbf{H})
\]
mapping $T \mapsto B_\mathbf{H}$. 
Furthermore, it gives a bijection 
\begin{equation} \label{right mutation}
   \twotilt_e^{>0} (B_{\mathbf{G}}) \overset{\sim}{\longrightarrow} \twotilt_e^{<0} (B_\mathbf{H}).
\end{equation}

\begin{proposition} \label{prop:flip}
   In the above, we have a commutative diagram

\[
   \xymatrix{
      \mathcal{A}(\mathbf{G})_{e}^{>0} \ar[r]^{\sim} \ar[d]& \mathcal{A}(\mathbf{H})_{e}^{<0} \ar[d] \\ 
      \twotilt_e^{>0} (B_{\mathbf{G}}) \ar[r]^{\sim} & \twotilt_e^{<0} (B_{\mathbf{H}})
   }
\]
where two vertical arrows are bijections in Theorem \ref{thm:AAC} and the bottom horizontal arrow is the bijection in (\ref{right mutation}). 
\end{proposition}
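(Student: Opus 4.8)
The plan is to realize the flip $\mu_e$ by a homeomorphism of the punctured sphere and to take this (restricted and suitably normalised) as the top horizontal arrow, and then to reduce the commutativity of the square to an equality of two linear maps on $g$-vectors. Concretely, the passage from $\mathbf{G}$ to $\mathbf{H}=\mu_e(\mathbf{G})$ is induced by an orientation-preserving homeomorphism $\Phi_e\colon\mathbb{S}\to\mathbb{S}$ supported in a neighbourhood of $e$ together with the edge $f$ (resp.\ the edges $f_a,f_b$) of Definition~\ref{def:flip}: in case~(1) it slides the external edge $e$, together with the monogon $\triangle_a$ bounded by $\alpha_e$, across $f$; in case~(2) it performs the analogous two-sided slide. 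I would take for $\Phi_e$ the ``right'' flip, namely the one supported near the flip region that turns positive intersection points with $\alpha_e$ into negative ones; a local count of signs then yields $\Phi_e(\mathcal{A}(\mathbf{G})_e^{>0})=\mathcal{A}(\mathbf{H})_e^{<0}$, in accordance with the identity $g_e(\mathbf{R}\mathrm{Hom}(T,U))=-g_e(U)$ that underlies the bijection~(\ref{right mutation}). Since $\Phi_e$ carries $(\mathbf{G},\mathbf{L}_{\mathbf{G}})$ onto $(\mathbf{H},\mathbf{L}_{\mathbf{H}})$ up to isotopy relative to punctures, it sends $\mathbf{G}$-arcs to $\mathbf{H}$-arcs and admissible (resp.\ reduced, complete) collections to admissible (resp.\ reduced, complete) ones; the resulting bijection $\mathcal{A}(\mathbf{G})_e^{>0}\to\mathcal{A}(\mathbf{H})_e^{<0}$ is the top arrow.

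Write $F:=\mathbf{R}\mathrm{Hom}(T,-)$ and let $\varphi$ be the isomorphism it induces on the Grothendieck groups $K_0(\Kb(\proj B_{\mathbf{G}}))$ and $K_0(\Kb(\proj B_{\mathbf{H}}))$, so that $g(F(U))=\varphi(g(U))$ for every two-term presilting complex $U$ and $\varphi$ is the linear map carrying the classes of the indecomposable summands of $T$ to the classes of the indecomposable projectives over $B_{\mathbf{H}}$. By Theorem~\ref{thm:AAC} the two vertical bijections preserve $g$-vectors, and by Proposition~\ref{prop:sw} (or Lemma~\ref{lem:g-invarc} applied to a single arc) the $g$-vector of an arc determines it; since $\Phi_e$ is a homeomorphism, the assignment $\gamma\mapsto g(\Phi_e(\gamma))$ is therefore the restriction to arcs of a linear map $\psi\colon\mathbb{Z}^{G_1}\to\mathbb{Z}^{H_1}$ (it acts linearly on the relevant intersection data). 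By Proposition~\ref{prop:g-inv} it then suffices to prove $\psi=\varphi$; and because the classes of the summands of $T$ form a $\mathbb{Z}$-basis of $\mathbb{Z}^{G_1}$ (Proposition~\ref{prop:partialsilt}, $T$ being silting), it is enough to check, summand by summand, that $\Phi_e$ sends the $\mathbf{G}$-arc whose complex is that summand to the $\mathbf{H}$-arc whose complex is the corresponding indecomposable projective over $B_{\mathbf{H}}$: for $P_d$ with $d\neq e$ this is the simple arc crossing only $\alpha_d$, and for $P'_e=(P\xrightarrow{h}P_e)$ it is the arc whose crossing sequence records the minimal right $(\add X)$-approximation $h$. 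Granting this, $\psi=\varphi$, hence $g(T_{\Phi_e(\gamma)})=g(\Phi_e(\gamma))=\psi(g(\gamma))=\varphi(g(\gamma))=g(F(T_\gamma))$ for every arc $\gamma$; Proposition~\ref{prop:g-inv} forces $T_{\Phi_e(\gamma)}\cong F(T_\gamma)$, and as $F$ is additive and the flip of a collection is the collection of the flips of its arcs, the square commutes.

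The main obstacle is this last, purely geometric, verification: singling out the correct homeomorphism $\Phi_e$ and checking, in a neighbourhood of the flip region, that it sends the arc dual to each projective summand of $T$ to the arc dual to the corresponding indecomposable projective over $B_{\mathbf{H}}$ --- most delicately the arc dual to the connecting complex $P'_e$, whose crossing sequence is governed by the $(\add X)$-approximation $h$. I would treat cases~(1) and~(2) of Definition~\ref{def:flip} separately, draw $\mathbf{L}_{\mathbf{G}}$ near $\triangle_a$ (resp.\ near $\triangle_a$ and $\triangle_b$) and its image under the slide, and read off the crossing sequences; the sign bookkeeping --- ensuring that $\Phi_e$ lands in the ``$<0$'' component rather than in the one attached to the opposite (left) flip --- is the point that needs the most care.
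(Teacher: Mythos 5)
The central device in your argument --- realizing the arc-level flip $\mu_e\colon\mathcal{A}(\mathbf{G})_e^{>0}\to\mathcal{A}(\mathbf{H})_e^{<0}$ by an orientation-preserving homeomorphism $\Phi_e$ of $\mathbb{S}$ with $\Phi_e(\mathbf{G},\mathbf{L}_{\mathbf{G}})=(\mathbf{H},\mathbf{L}_{\mathbf{H}})$ up to isotopy rel punctures --- does not exist, and this is a genuine gap rather than a detail to be filled in. Two independent obstructions. First, the flip changes the combinatorial attachment of $e$: in case~(1) its endpoint set passes from $\{a,b\}$ to $\{a,c\}$, so the abstract valences of $b$ and $c$ change. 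A homeomorphism of $\mathbb{S}$ fixing the punctures (which is what ``isotopy rel punctures'' demands) sends $e$ to a curve with the \emph{same} endpoints, so $\Phi_e(\mathbf{G})$ cannot equal $\mathbf{H}$ as an embedded graph with its vertex labelling. Second, even granting some ambient homeomorphism, an orientation-preserving one fixing the puncture $a$ sends counterclockwise spirals around $a$ to counterclockwise spirals around $a$, hence preserves the sign of an intersection with $\alpha_e$ (Lemma~\ref{lem:external}); such a $\Phi_e$ can only map $\mathcal{A}(\mathbf{G})_e^{>0}$ into a set with $g_e>0$, never into $\mathcal{A}(\mathbf{H})_e^{<0}$. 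Your description of $\Phi_e$ as simultaneously orientation-preserving, puncture-fixing, and ``turning positive intersections into negative ones'' is internally inconsistent.

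The map that actually appears in the paper is \emph{not} induced by an ambient homeomorphism. It is defined piecewise on arcs after the proposition: arcs with $g_e(\gamma)=0$ are interpreted unchanged as $\mathbf{H}$-arcs, while arcs with $g_e(\gamma)=1$ have their spiral around $a$ \emph{reversed} (a discontinuous modification, not the restriction of a homeomorphism); the resulting $g$-vector transformation is then written out explicitly, and commutativity with the derived equivalence is extracted from the fact that admissible collections are determined by their $g$-vectors (Lemma~\ref{lem:g-invarc} / Proposition~\ref{prop:g-inv}). The second half of your plan --- that both vertical arrows preserve $g$-vectors, that the derived equivalence acts by a fixed linear map $\varphi$ on $K_0$, that the classes of the summands of $T$ form a $\mathbb{Z}$-basis, and that agreement of two $g$-vector maps on that basis plus rigidity forces commutativity --- is sound and close in spirit to what the paper does. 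But the linearity of $\psi\colon\gamma\mapsto g(\mu_e(\gamma))$ has to be established directly from the piecewise definition (as the paper does by exhibiting the formula), not deduced from a nonexistent homeomorphism. Replace the $\Phi_e$ framework with the explicit arc-by-arc definition and the explicit $g$-vector transformation, and the rest of your outline goes through.
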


\begin{proof}
   It is clear from the previous discussion.
\end{proof}

Here, we describe the bijection 
\begin{equation} \label{eq:fliparc}
   \mu_e\colon \mathcal{A}(\mathbf{G})_e^{>0} \overset{\sim}{\longrightarrow} \mathcal{A}(\mathbf{H})_e^{<0}
\end{equation}
in Proposition \ref{prop:flip}. 

\begin{enumerate}
\item Assume that $e$ is external with external vertex $a$. By definition, each $\mathbf{G}$-arc $\gamma$ with $g_e(\gamma)=0$ is naturally regarded as an $\mathbf{H}$-arc. So, let $\gamma'$ be $\gamma$ itself.
On the other hand, for a $\mathbf{G}$-arc $\gamma$ with $g_e(\gamma)=1$, it spirals around $a$ in counterclockwise direction by Lemma \ref{lem:external}. 
Let $\gamma'$ be a curve of $\mathbb{S}$ obtained from $\gamma$ by reversing the spiral around $a$, in clockwise direction. 
Then it clearly provides an $\mathbf{H}$-arc whose $g$-vector is given by
\[
   g_d(\gamma') = 
   \begin{cases} 
      - g_e(\gamma)  & \text{if $d = e$,} \\ 
      g_e(\gamma) + g_d(\gamma) & \text{if $d = f$,}\\ 
      g_d(\gamma) & \text{otherwise}. 
   \end{cases}
\]

\item Assume that $e$ is not external. In this case, it is easy to see that every $\mathbf{G}$-arc $\gamma$ with $g_e(\gamma) \in \{0,1\}$ can be regarded as an $\mathbf{H}$-arc. So, let $\gamma'$ be $\gamma$ itself. 
Furthermore, its $g$-vector is given by
\[
   g_d(\gamma') = 
   \begin{cases} 
      - g_e(\gamma)  & \text{if $d = e$}, \\ 
      g_e(\gamma) + g_d(\gamma) & \text{if $d\in \{f_u,f_v\}$}, \\ 
      g_d(\gamma) & \text{otherwise}. 
   \end{cases}
\]
\end{enumerate}

For each case, the correspondence $\gamma\mapsto \gamma'$ induces the desired bijection in (\ref{eq:fliparc}). In fact, the commutativity of the diagram in Theorem \ref{prop:flip} follows from Lemma \ref{lem:g-invarc}.

\begin{example}
For a plane tree $\mathbf{G}$ in Example \ref{ex:collections}(2), we consider a flip $\mathbf{H}:=\mu_1(\mathbf{G})$ with respect to the edge $1$:
\[
   \begin{tikzpicture}
   \coordinate(a) at (3,-1.2);
   \coordinate(b) at (1.5,0);
   \coordinate(c) at (3,0);
   \coordinate(d) at (4.5,0);

   \draw (a)--node[fill=white,inner sep=1]{\small$1$}(c) (b)--node[fill=white,inner sep=1]{\small$2$}(c)--node[fill=white,inner sep=1]{\small$3$}(d);
   \draw[fill=white](a)circle(0.6mm);
   \draw[fill=white](b)circle(0.6mm);
   \draw[fill=white](c)circle(0.6mm);
   \draw[fill=white](d)circle(0.6mm);
   \end{tikzpicture} 
\]

All complete collection on this plane tree are described in \cite[Example 2.10]{Adachi16b} for example. 
The following figure describes the bijection in Proposition \ref{prop:flip}. Here, each black point in the left (resp., right) figure corresponds to an element of $\mathcal{A}(\mathbf{G})_1^{>0}$ (resp., $\mathcal{A}(\mathbf{H})_1^{<0}$): 

\[
\renewcommand{\arraystretch}{1}
\setlength{\tabcolsep}{3mm}
\begin{tabular}{ccccccccc}
\begin{tikzpicture}
\coordinate(v1) at (0,0.2);
\coordinate(v2) at (0,0.6);
\coordinate(v3) at (0.5,0.8);
\coordinate(v4) at (-0.5,0.8);
\coordinate(v5) at (0,1);
\coordinate(v6) at (0,1.4);
\coordinate(v7) at (1.2,0.4);
\coordinate(v8) at (1,1);
\coordinate(v9) at (-1.2,0.4);
\coordinate(v10) at (-1,1);

\coordinate(-v1) at ($-1*(v1)$);
\coordinate(-v2) at ($-1*(v2)$);
\coordinate(-v3) at ($-1*(v3)$);
\coordinate(-v4) at ($-1*(v4)$);
\coordinate(-v5) at ($-1*(v5)$) ;
\coordinate(-v6) at ($-1*(v6)$);
\coordinate(-v7) at ($-1*(v7)$);
\coordinate(-v8) at ($-1*(v8)$);
\coordinate(-v9) at ($-1*(v9)$);
\coordinate(-v10) at ($-1*(v10)$);

\draw[fill=black](v1)circle(0.6mm);
\draw[fill=black](v2)circle(0.6mm);
\draw[fill=black](v3)circle(0.6mm);
\draw[fill=black](v4)circle(0.6mm);
\draw[fill=black](v5)circle(0.6mm);
\draw[fill=black](v6)circle(0.6mm);
\draw[fill=black](v7)circle(0.6mm);
\draw[fill=black](v8)circle(0.6mm);

\draw (v1)--(v2) (v2)--(v3)--(v5)--(v4)--cycle (v5)--(v6) (v7)--(v8)--(v6)--(v10)--(v9);
\draw (-v1)--(-v2) (-v2)--(-v3)--(-v5)--(-v4)--cycle (-v5)--(-v6) (-v7)--(-v8)--(-v6)--(-v10)--(-v9);
\draw (-v8)--(v1)--(-v10) (v4)--(-v7) (v3)--(-v9);
\draw (v8)--(-v1)--(v10) (-v4)--(v7) (-v3)--(v9);
\draw (v9)--(-v7) (-v9)--(v7);

\draw[fill=white](v9)circle(0.6mm);
\draw[fill=white](v10)circle(0.6mm);

\draw[fill=white](-v1)circle(0.6mm);
\draw[fill=white](-v2)circle(0.6mm);
\draw[fill=white](-v3)circle(0.6mm);
\draw[fill=white](-v4)circle(0.6mm);
\draw[fill=white](-v5)circle(0.6mm);
\draw[fill=white](-v6)circle(0.6mm);
\draw[fill=white](-v7)circle(0.6mm);
\draw[fill=white](-v8)circle(0.6mm);
\draw[fill=black](-v9)circle(0.6mm);
\draw[fill=black](-v10)circle(0.6mm);
\end{tikzpicture}
& \quad 
\begin{tikzpicture}
   \coordinate(v1) at (0,1.4);
   \coordinate(v2) at (-1.1,0.8);
   \coordinate(v3) at (0,0.8);
   \coordinate(v4) at (1.1,0.8);
   \coordinate(v5) at (-1.4,0.3);
   \coordinate(v6) at (-0.8,0.3);
   \coordinate(v7) at (-0.3,0.3);
   \coordinate(v8) at (0.3,0.3);
   \coordinate(v9) at (0.8,0.3);
   \coordinate(v10) at (1.4,0.3);
   
   \coordinate(-v1) at ($-1*(v1)$);
   \coordinate(-v2) at ($-1*(v2)$);
   \coordinate(-v3) at ($-1*(v3)$);
   \coordinate(-v4) at ($-1*(v4)$);
   \coordinate(-v5) at ($-1*(v5)$);
   \coordinate(-v6) at ($-1*(v6)$);
   \coordinate(-v7) at ($-1*(v7)$);
   \coordinate(-v8) at ($-1*(v8)$);
   \coordinate(-v9) at ($-1*(v9)$);
   \coordinate(-v10) at ($-1*(v10)$);

\draw (v1)--(v2) (v1)--(v3) (v1)--(v4) (v6)--(v7) (v8)--(v9); 
\draw (-v1)--(-v2) (-v1)--(-v3) (-v1)--(-v4) (-v6)--(-v7) (-v8)--(-v9); 
\draw (v2)--(v5)--(-v10)--(-v4)--(-v9)--(v6)--cycle (v3)--(v7)--(-v8)--(-v3)--(-v7)--(v8)--cycle (v4)--(v9)--(-v6)--(-v2)--(-v5)--(v10)--cycle;
\draw (v5)..controls(0,0.6)..(v10);
\draw (-v5)..controls(0,0)..(-v10);

\draw[fill=white](v1)circle(0.6mm);
\draw[fill=black](v2)circle(0.6mm);
\draw[fill=white](v3)circle(0.6mm);
\draw[fill=white](v4)circle(0.6mm);
\draw[fill=black](v5)circle(0.6mm);
\draw[fill=black](v6)circle(0.6mm);
\draw[fill=white](v7)circle(0.6mm);
\draw[fill=white](v8)circle(0.6mm);
\draw[fill=white](v9)circle(0.6mm);
\draw[fill=black](v10)circle(0.6mm);

\draw[fill=black](-v1)circle(0.6mm);
\draw[fill=black](-v2)circle(0.6mm);
\draw[fill=white](-v3)circle(0.6mm);
\draw[fill=black](-v4)circle(0.6mm);
\draw[fill=black](-v5)circle(0.6mm);
\draw[fill=white](-v6)circle(0.6mm);
\draw[fill=white](-v7)circle(0.6mm);
\draw[fill=white](-v8)circle(0.6mm);
\draw[fill=black](-v9)circle(0.6mm);
\draw[fill=black](-v10)circle(0.6mm);

\end{tikzpicture}
\end{tabular}
\]
\end{example}

\subsection{The opposite plane tree} \label{seq:opp}
For a plane tree $\mathbf{G}=(G_0,G_1,s,\sigma)$, let ${\mathbf{G}}^{\rm op}$ be a plane tree 
whose underlying graph is the same as $\mathbf{G}$ and the cyclic permutation is given by $\sigma^{-1}:=(\sigma_v^{-1})_{v\in G_0}$.
We call it the \textit{opposite plane tree} of ${\mathbf{G}}$.
 
By symmetry, we have the following.

\begin{proposition} \label{prop:opp}
For any $e\in G_1$ and any $j\in \mathbb{Z}$, we have a commutative diagram 
\[
   \xymatrix{
      \mathcal{A}(\mathbf{G})_e^{j} \ar[r]_{(-)^{\rm op}}^{\sim} \ar[d] & \mathcal{A}(\mathbf{G}^{\rm op})_e^{-j}\ar[d] \\
      \twotilt_e^j (B_{\mathbf{G}}) \ar[r]^{\sim} & \twotilt_e^{-j} (B_{\mathbf{G}^{\rm op}})
   }
\] 
where two vertical arrows are bijections in Theorem \ref{thm:AAC} and two horizontal arrows are canonical ones.
\end{proposition}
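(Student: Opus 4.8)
I present this as a plan for proving Proposition \ref{prop:opp}.

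The plan is to read this off as a genuine $\mathbf{G}\leftrightarrow\mathbf{G}^{\rm op}$ duality on both sides of the diagram and then to reduce commutativity of the square to the rigidity of $2$-presilting complexes (Proposition \ref{prop:g-inv}). First I would make the top arrow precise geometrically. Passing from $\sigma=(\sigma_v)_v$ to $\sigma^{-1}=(\sigma_v^{-1})_v$ is, on the level of the canonical embedding, nothing but composing with an orientation-reversing self-homeomorphism $\phi$ of $\mathbb{S}=S^2$: applying $\phi$ carries the embedded plane tree $\mathbf{G}$ together with its dual $\mathbf{L}$ onto the canonical embedding of $\mathbf{G}^{\rm op}$ together with its dual, because the counterclockwise cyclic ordering of the edges around each puncture becomes the clockwise one, i.e.\ $\sigma_v^{-1}$. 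Hence $\phi$ matches up the loops $\alpha_e$ on the two sides and induces a bijection $\gamma\mapsto\gamma^{\rm op}$ from $\mathbf{G}$-arcs to $\mathbf{G}^{\rm op}$-arcs preserving (non-)intersection, so $\mathcal{A}(\mathbf{G})\xrightarrow{\sim}\mathcal{A}(\mathbf{G}^{\rm op})$; and since $\phi$ reverses orientation it exchanges clockwise and counterclockwise spirals and exchanges positive and negative intersection points, whence $g_e(\gamma^{\rm op})=-g_e(\gamma)$ for all $e\in G_1$. In particular the top arrow restricts to $\mathcal{A}(\mathbf{G})_e^j\xrightarrow{\sim}\mathcal{A}(\mathbf{G}^{\rm op})_e^{-j}$.

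Next I would pin down the bottom arrow. From Definition \ref{def:QG} one sees that $Q_{\mathbf{G}^{\rm op}}$ is the opposite quiver $(Q_{\mathbf{G}})^{\rm op}$, that the generators of $I_{\mathbf{G}^{\rm op}}$ and $J_{\mathbf{G}^{\rm op}}$ are the reverses of those of $I_{\mathbf{G}}$ and $J_{\mathbf{G}}$ (each special cycle $C_{v,e}$ reversing to a special cycle of $\mathbf{G}^{\rm op}$), and hence $B_{\mathbf{G}^{\rm op}}\cong (B_{\mathbf{G}})^{\rm op}$ as $k$-algebras. The canonical horizontal map is then the two-term dual: writing $(-)^{*}=\Hom_{B_{\mathbf{G}}}(-,B_{\mathbf{G}})$ for the projective duality, send $T=(T^{-1}\xrightarrow{f}T^0)$ to $T^{\vee}:=\bigl((T^0)^{*}\xrightarrow{f^{*}}(T^{-1})^{*}\bigr)$ concentrated in degrees $-1,0$. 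This $(-)^{\vee}$ is induced by the standard contravariant triangle equivalence $\Kb(\proj B_{\mathbf{G}})\to\Kb(\proj (B_{\mathbf{G}})^{\rm op})$, it fixes the indecomposable projectives $P_e$, sends $2$-tilting to $2$-tilting and $2$-presilting to $2$-presilting, and satisfies $g_e(T^{\vee})=-g_e(T)$; so it restricts to $\twotilt_e^j(B_{\mathbf{G}})\xrightarrow{\sim}\twotilt_e^{-j}(B_{\mathbf{G}^{\rm op}})$.

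For commutativity I would argue indecomposable-wise. Fix a $\mathbf{G}$-arc $\gamma$; by Theorem \ref{thm:AAC} both $(T_{\gamma})^{\vee}$ and $T_{\gamma^{\rm op}}$ are indecomposable $2$-presilting complexes over $B_{\mathbf{G}^{\rm op}}\cong (B_{\mathbf{G}})^{\rm op}$, so by Proposition \ref{prop:g-inv} it suffices to check they have equal $g$-vectors, and indeed
\[
   g\bigl((T_{\gamma})^{\vee}\bigr)=-g(T_{\gamma})=-g(\gamma)=g(\gamma^{\rm op})=g(T_{\gamma^{\rm op}}),
\]
the first equality by the definition of $(-)^{\vee}$, the second and fourth by Theorem \ref{thm:AAC}, and the third by the sign computation of the first step. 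Taking direct sums over the arcs of a complete collection then gives commutativity on all of $\mathcal{A}(\mathbf{G})$, and since all four maps are bijections the proof is complete. The only real bookkeeping I anticipate is in the first two steps: checking that $\phi(\mathbf{L})$ is genuinely the dual of $\mathbf{G}^{\rm op}$ (so that $\phi$ does carry $\mathbf{G}$-arcs to $\mathbf{G}^{\rm op}$-arcs and positive/negative intersection points to negative/positive ones), and spelling out the isomorphism $B_{\mathbf{G}^{\rm op}}\cong (B_{\mathbf{G}})^{\rm op}$ and the effect of $(-)^{\vee}$ on $g$-vectors; both are routine, and everything else is forced by Theorem \ref{thm:AAC} and Proposition \ref{prop:g-inv}, so the argument is essentially "by symmetry" as indicated before the statement.
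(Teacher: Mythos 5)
Your proof is correct and takes essentially the same route as the paper: both describe the top arrow via the orientation-reversing identification of arcs on $\mathbf{G}$ and $\mathbf{G}^{\rm op}$, and the bottom arrow via a standard duality realizing $B_{\mathbf{G}^{\rm op}}\cong(B_{\mathbf{G}})^{\rm op}$ (the paper uses the $k$-dual $\Hom_k(-,k)$, you use $\Hom_{B_{\mathbf{G}}}(-,B_{\mathbf{G}})$, and these agree on $\Kb(\proj B_{\mathbf{G}})$ since $B_{\mathbf{G}}$ is symmetric). The one place you go beyond the paper is the explicit check of commutativity by comparing $g$-vectors and invoking Proposition \ref{prop:g-inv}, which the paper leaves implicit; that is a clean and correct way to finish the argument.
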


\begin{proof}
   First, the algebra $B_{\mathbf{G}^{\rm op}}$ is naturally isomorphic to the opposite algebra of $B_{\mathbf{G}}$. So, the $k$-dual $\Hom_{k}(-,k)$ gives a bijection $\twotilt_e^j(B_{\mathbf{G}}) \overset{\sim}{\to} \twotilt_e^{-j} (B_{\mathbf{G}^{\rm op}})$ for any $e\in G_1$ and $j \in \mathbb{Z}$. 
   
   Second, we find that every complete collection of $\mathbf{G}$-arcs is canonically identified with a complete collection of $\mathbf{G}^{\rm op}$-arcs, vise versa. 
   However, a role of positive intersection points and negative intersection are swapped. This provides the bijection $(-)^{\rm op}$ in the statement. 
\end{proof}

\section{Gluing plane trees} \label{sec:glue}
Throughout this section, let $\mathbf{G}=(G_0,G_1,s, \sigma)$ be a plane tree and $\mathbf{L}$ its dual, which are embedded into the sphere $\mathbb{S}$. Assume that $n:=|\mathbf{G}|>1$.

Fix an edge $e$ of $\mathbf{G}$ having $a,b$ as its endpoints.
Since $\mathbf{G}$ is a plane tree, it determines a pair of plane subtrees ${\mathbf{G}}^{a},{\mathbf{G}}^{b}$ of ${\mathbf{G}}$ satisfying $G^{a}_1\cap G^{b}_1 = \{e\}$ and $G^{a}_1\cup G^{b}_1 = G_1$. 
Without loss of generality, we may assume that $e$ is an external edge of ${\mathbf{G}}^{a}$ (resp. ${\mathbf{G}}^{b})$ having external vertex $a$ (resp., $b$). 
For $v \in \{a,b\}$, the dual $\mathbf{L}^v$ of $\mathbf{G}^v$, embedded into the sphere $\mathbb{S}^v$, is canonically included in $\mathbf{L}$ as a ribbon subgraph.

\[
\renewcommand{\arraystretch}{1}
\setlength{\tabcolsep}{1mm}
\begin{tabular}{ccccccccc}
   \small$\mathbf{G}$ & &\small$\mathbf{G}^a$ & \small$\mathbf{G}^b$ \\
   \begin{tikzpicture}
      \draw[red] (0,0.6)--(0,-0.7);
      \node(al) at (0,-0.85) {\color{red}\footnotesize$\alpha_e$};
      \node(a) at (0.6,0) {\tiny$a$};
      \node(b) at (-0.6,0) {\tiny$b$};
      \draw (a)--node[fill=white,inner sep=1]{\footnotesize$e$}(b); 
      \draw (a)--($(a)+(45:0.8)$) (a)--($(a)+(0:0.8)$) (a)--($(a)+(-90:0.8)$);
      \draw (b)--($(b)+(120:0.8)$) (b)--($(b)+(180:0.8)$) (b)--($(b)+(-120:0.8)$);
      
      \draw (a)circle(1mm);
      \draw (b)circle(1mm);

      \node(xx) at (0,-1) {};
      \node(xx) at (0,1) {};
      
   \end{tikzpicture}
   &
\begin{tikzpicture}
   \node(sq) at (0,0) {$\rightsquigarrow$};
   \node(xx) at (0,-1) {};
      \node(xx) at (0,1) {};
\end{tikzpicture}
   &
   \begin{tikzpicture}
      \coordinate(bl) at (0.2,0.6);
      \draw[fill=black] (bl)circle(0.4mm);
      \draw[red] (bl)..controls(0.9,0.3)and(1.1,-0.2)..(0.8,-0.3);
      \draw[red] (0.8,-0.3)..controls(0.2,-0.5)and(0.1,0.3)..(bl);
      \node at (0.5,-0.6){\color{red}\footnotesize$\alpha_e$};
      \node(a) at (0.6,0) {\tiny$a$};
      \node(b) at (-0.6,0) {\tiny$b$};
      \draw (a)--node[fill=white,inner sep=1]{\footnotesize$e$}(b); 
      \draw (b)--($(b)+(120:0.8)$) (b)--($(b)+(180:0.8)$) (b)--($(b)+(-120:0.8)$);
      
      \draw (a)circle(1mm);
      \draw (b)circle(1mm);

      \node(xx) at (0,-1) {};
      \node(xx) at (0,1) {};
   
   \end{tikzpicture}
   & 
   \begin{tikzpicture}
      \coordinate(bl) at (-0.2,0.6);
      \draw[fill=black] (bl)circle(-0.4mm);
      \draw[red] (bl)..controls(-0.9,0.3)and(-1.1,-0.2)..(-0.8,-0.3);
      \draw[red] (-0.8,-0.3)..controls(-0.2,-0.5)and(-0.1,0.3)..(bl);
      \node at (-0.5,-0.6){\color{red}\footnotesize$\alpha_e$};

      \node(a) at (0.6,0) {\tiny$a$};
      \node(b) at (-0.6,0) {\tiny$b$};
      \draw (a)--node[fill=white,inner sep=1]{\footnotesize$e$}(b); 
      \draw (a)--($(a)+(45:0.8)$) (a)--($(a)+(0:0.8)$) (a)--($(a)+(-90:0.8)$);
      
      \draw (a)circle(1mm);
      \draw (b)circle(1mm);
      \node(xx) at (0,-1) {};
      \node(xx) at (0,1) {};
   
   \end{tikzpicture}
\end{tabular}
\]

\begin{proposition}
For $v\in \{a,b\}$, the set of $\mathbf{G}^v$-arcs is precisely a set of all $\mathbf{G}$-arcs $\gamma$ satisfying $g_d(\gamma)=0$ for all $d\in G_1\setminus G_1^v$.
\end{proposition}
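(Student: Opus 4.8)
The plan is to show both inclusions of the claimed equality of sets, working entirely on the sphere $\mathbb{S}$ in which $\mathbf{G}$, $\mathbf{G}^a$, $\mathbf{G}^b$, and their duals are simultaneously embedded. Fix $v\in\{a,b\}$ and recall that $\mathbf{L}^v\subseteq\mathbf{L}$ as a ribbon subgraph, so that the loops $\alpha_d$ of $\mathbf{L}^v$ are precisely the loops $\alpha_d$ of $\mathbf{L}$ with $d\in G_1^v$. The key geometric observation is that the edge $e$ separates $\mathbf{G}$ into $\mathbf{G}^a$ and $\mathbf{G}^b$, and correspondingly the loop $\alpha_e$ bounds a disk $D^v$ on $\mathbb{S}$ that contains $\mathbf{G}^v\setminus\{e\}$ (and whose exterior contains the rest of $\mathbf{G}$); more precisely, deleting the open arc $\alpha_e$ and the vertex of $\mathbf{L}$ disconnects $\mathbb{S}$ into two pieces, one of which is exactly the ambient sphere $\mathbb{S}^v$ for $\mathbf{G}^v$ with its distinguished monogon $\triangle_v^{(e)}$ re-attached. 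I would state this as a lemma about the canonical embeddings and cite the construction in Section~\ref{sec:AAC} together with \cite{Labourie13}.

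First I would prove the inclusion ``$\subseteq$'': let $\gamma$ be a $\mathbf{G}^v$-arc. By Definition~\ref{def:arc} applied inside $\mathbb{S}^v$, $\gamma$ intersects only the loops $\alpha_d$ with $d\in G_1^v$, and its endpoints spiral around punctures of $\mathbf{G}^v$, all of which are punctures of $\mathbf{G}$. Via the embedding $\mathbb{S}^v\hookrightarrow\mathbb{S}$ described above, $\gamma$ becomes a non-self-intersecting curve on $\mathbb{S}$ that does not meet any $\alpha_d$ for $d\in G_1\setminus G_1^v$; one checks the three bullet conditions of Definition~\ref{def:arc} are inherited, so $\gamma$ is a $\mathbf{G}$-arc. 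Since $\gamma$ never crosses $\alpha_d$ for $d\notin G_1^v$, the definition of $g_d(\gamma)$ as (positive minus negative intersection points with $\alpha_d$) gives $g_d(\gamma)=0$ for all such $d$. This direction is essentially bookkeeping once the separating-disk picture is in place.

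The substantive direction is ``$\supseteq$'': suppose $\gamma$ is a $\mathbf{G}$-arc with $g_d(\gamma)=0$ for all $d\in G_1\setminus G_1^v$. By Lemma~\ref{lem:abs}, $|g_d(\gamma)|$ equals the number of intersection points of $\gamma$ with $\alpha_d$, so $g_d(\gamma)=0$ forces $\gamma$ to be disjoint from every $\alpha_d$ with $d\notin G_1^v$. In particular $\gamma$ is disjoint from $\alpha_e$ (note $e\in G_1^v$, but one must treat $e$ carefully — see below), hence $\gamma$ lies entirely in one complementary region of $\bigcup_{d\notin G_1^v}\alpha_d$, and I claim this region is (isotopic into) $\mathbb{S}^v$. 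The point is that the loops $\alpha_d$, $d\notin G_1^v$, together with $\alpha_e$, cut $\mathbb{S}$ so that the component containing any curve meeting some $\alpha_{d_0}$ with $d_0\in G_1^v\setminus\{e\}$ is precisely the $\mathbf{G}^v$-side; since $\gamma$ meets at least one $\alpha_{d_0}$ (it is an arc) and that $d_0$ must lie in $G_1^v$, $\gamma$ is confined to $\mathbb{S}^v$. Then $\gamma$, viewed in $\mathbb{S}^v$, meets only loops of $\mathbf{L}^v$ and spirals around punctures of $\mathbf{G}^v$, and the three conditions of Definition~\ref{def:arc} for $\mathbf{G}^v$ hold because they held for $\mathbf{G}$ and only involve the data in $G_1^v$; thus $\gamma$ is a $\mathbf{G}^v$-arc.

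The main obstacle I anticipate is the careful treatment of the edge $e$ itself and its loop $\alpha_e$: the hypothesis only constrains $g_d$ for $d\in G_1\setminus G_1^v$, and $e\in G_1^v$, so $\gamma$ may well cross $\alpha_e$ (indeed a $\mathbf{G}^v$-arc can have $g_e=\pm1$). I must argue that crossing $\alpha_e$ does not let $\gamma$ escape from $\mathbb{S}^v$ into the other subtree's region — this is where the separating-disk structure does the real work: $\alpha_e$ bounds the disk $D^v$, and once $\gamma$ is known to avoid all $\alpha_d$ with $d\notin G_1^v$, any excursion of $\gamma$ across $\alpha_e$ stays within $D^v\cup\triangle_v^{(e)}$, which is $\mathbb{S}^v$ minus a disk, so $\gamma$ cannot reach $\mathbf{G}^b\setminus\{e\}$ (resp.\ $\mathbf{G}^a\setminus\{e\}$). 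Making the isotopy-relative-to-punctures bookkeeping of ``which complementary region'' fully rigorous, rather than merely pictorial, is the delicate part; I would phrase it using the fact that $\mathbf{L}\setminus\mathbf{L}^v$ together with $\alpha_e$ is an embedded sub-ribbon-graph whose complement has the required two components, which follows from the tree structure of $\mathbf{G}$ and the inductive construction of duals.
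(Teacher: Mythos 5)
Your route is genuinely different from the paper's, and considerably more laborious. The paper disposes of this in two lines via the signed-walk bijection of Proposition~\ref{prop:sw}: under $\gamma\mapsto w_\gamma$, the edges appearing in $w_\gamma$ are exactly those $d$ with $g_d(\gamma)\neq 0$, so the condition $g_d(\gamma)=0$ for all $d\in G_1\setminus G_1^v$ says precisely that $w_\gamma$ uses only edges of $G_1^v$; since $\mathbf{G}^v$ is a subtree of $\mathbf{G}$, such walks are exactly the signed walks on $\mathbf{G}^v$, and the analogous bijection for $\mathbf{G}^v$ identifies them with $\mathbf{G}^v$-arcs. This sidesteps every topological subtlety you wrestle with. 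What the combinatorial route buys is brevity and rigor for free; what your topological route buys is fidelity to the geometric model, at the cost of the very bookkeeping you flag as ``the delicate part.''

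Two concrete gaps remain in your version. First, in the $\supseteq$ direction your confinement claim is phrased for a curve meeting some $\alpha_{d_0}$ with $d_0\in G_1^v\setminus\{e\}$; the arc whose signed walk is the single edge $(e)$ meets only $\alpha_e$, so this case is not covered by the argument as written, only by the informal discussion in your final paragraph. Second, in the $\subseteq$ direction, ``via the embedding $\mathbb{S}^v\hookrightarrow\mathbb{S}$'' is not quite right: $\mathbb{S}$ is glued from $\mathbb{S}^a\setminus\triangle^a_a$ and $\mathbb{S}^b\setminus\triangle^b_b$, so only $\mathbb{S}^v$ \emph{minus} the monogon $\triangle^v_v$ is embedded. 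A $\mathbf{G}^v$-arc with $g_e(\gamma)=\pm1$ spirals around $v$ inside that deleted monogon, so it must be re-drawn to spiral inside the larger polygon $\triangle_v\subset\mathbb{S}$ and then re-checked against Definition~\ref{def:arc}; this is not an immediate ``inheritance.'' Both points can be patched, but note that once you argue (using the tree structure) that an arc avoiding every $\alpha_d$ with $d\notin G_1^v$ can only cross loops of $\mathbf{L}^v$, you have in effect reconstructed the edge set of the corresponding walk, and there is little reason not to finish by the signed-walk argument directly.
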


\begin{proof}
   Our claim is clear from the bijection between (a) and (b) in Proposition \ref{prop:sw}. More precisely, as $\mathbf{G}^v$ being a subtree of $\mathbf{G}$, 
   signed walks on $\mathbf{G}^v$ are precisely signed walks on $\mathbf{G}$ having only the edges in $\mathbf{G}^v$. 
\end{proof}

Next, the triple $(\mathbb{S},\mathbf{G}, \mathbf{L})$ of $\mathbf{G}$ is constructed from that of $\mathbf{G}^a$ and of $\mathbf{G}^b$ as follows:
For $v\in \{a,b\}$, let $\triangle_v^v$ be the polygon of $v$ in $\mathbb{S}^v$, which is a monogon with side $\alpha_e$ of $\mathbf{L}^v$. 
Then the surface $\mathbb{S}$ is obtained from $\mathbb{S}^a \setminus \triangle_a^a$ and $\mathbb{S}^b\setminus \triangle_b^b$ by gluing them along $\alpha_e$, with preserving the respective orientation of surfaces. 
Together, $\mathbf{G}$ (resp., ${\mathbf{L}}$) is constructed from ${\mathbf{G}}^a$ and ${\mathbf{G}}^b$ (resp., ${\mathbf{L}}^a$ and ${\mathbf{L}}^b$). 

Along this construction, we glue a pair of arcs on $\mathbf{G}^a$ and on $\mathbf{G}^b$ to get $\mathbf{G}$-arcs as follows. 
For $v\in \{a,b\}$, let $\gamma^v$ be a $\mathbf{G}^a$-arc satisfying $g_e(\gamma^v)=1$, and let $p^v$ be an intersection point of $\gamma^v$ and $\alpha_e$. In this case, $\gamma^v$ spirals around $v$ in $\mathbb{S}^v$ counterclockwise by Lemma \ref{lem:external}. Consider a restriction $\bar{\gamma}^v$ of $\gamma^v$ on $\mathbb{S}^v\setminus \triangle_v^v$, one of whose endpoint is $p^v$. 
We write $\gamma^a\underset{e}{\times}\gamma^b$ for the curve of $\mathbb{S}$ obtained from $\bar{\gamma}^a$ and $\bar{\gamma}^b$ by gluing them at $p:=p^a=p^b$ on $\alpha_e$. See Figure \ref{fig:glue_arc}.

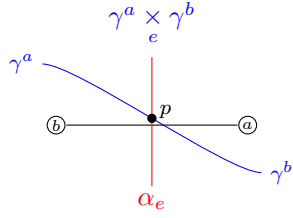
\begin{figure}[htp]   \centering
\begin{tikzpicture}[scale=0.9]
      \coordinate(u)at(0,0.8); \coordinate(d)at(0,-1);
      \coordinate(llu)at(-1.5,1); \coordinate(lu)at(-0.6,1); \coordinate(rru)at(1.5,1); \coordinate(ru)at(0.6,1);
      \coordinate(lld)at(-1.5,-1.2); \coordinate(ld)at(-0.6,-1.2); \coordinate(rrd)at(1.5,-1.2); \coordinate(rd)at(0.6,-1.2);
      \draw[blue,shift={(0,0.1)}](-1.6,0.8)..controls(-1.1,0.8)and(1.1,-0.8)..(1.6,-0.8);

      \draw[fill=white](-1.4,0)circle(1.3mm); \draw[fill=white](1.4,0)circle(1.3mm);
      \node[fill=white, inner sep=0.05] at(-1.4,0){\tiny$b$};
      \node[fill=white, inner sep=0.05] at(1.4,0){\tiny$a$};
      \draw (-1.25,0)--(1.25,0);
      \draw[red] (0,1)--(0,-0.9); 
      \draw[fill=black] (0,0.1)circle(0.6mm);
      \node at (0.2,0.2) {\footnotesize$p$};
      \node[red] at (0,-1.12) {$\alpha_e$};
      \node[blue] at(0,1.5) {$\gamma^a\underset{e}{\times} \gamma^b$};
      \node[blue] at(-1.9,0.9) {\footnotesize$\gamma^a$};
      \node[blue] at(1.9,-0.7) {\footnotesize$\gamma^b$};
\end{tikzpicture} 
\caption{A local figure for gluing $\gamma^a$ and $\gamma^b$ along $\alpha_e$.}
\label{fig:glue_arc}
\end{figure}

\begin{proposition}
In the above, $\gamma^a\underset{e}{\times} \gamma^b$ is an arc on $\mathbf{G}$ satisfying 
\begin{equation} \label{eq:glue-g}
   g_d(\gamma^a\underset{e}{\times} \gamma^b) = g_d(\gamma^v)
\end{equation}
for all $d\in G_1^v$ with $v\in \{a,b\}$. 
\end{proposition}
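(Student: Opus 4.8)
The plan is to write $\gamma := \gamma^a\underset{e}{\times}\gamma^b$ and verify directly that it satisfies the three defining conditions of a $\mathbf{G}$-arc (Definition~\ref{def:arc}), and then to read off its $g$-vector. The structural observation driving the argument is that $\mathbb{S}$ is obtained by gluing $\mathbb{S}^a\setminus\triangle_a^a$ and $\mathbb{S}^b\setminus\triangle_b^b$ along $\alpha_e$, and that $\gamma$ decomposes as $\bar{\gamma}^a\cup\bar{\gamma}^b$, where $\bar{\gamma}^v$ lies entirely in one of the two halves and meets the other half only at the single point $p=p^a=p^b$ on $\alpha_e$. Consequently everything except the local picture at $p$ is inherited verbatim from $\gamma^a$ and $\gamma^b$, and all the real work concentrates at the crossing point $p$.

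First I would record the routine parts. Since $g_e(\gamma^v)=1$, the arc $\gamma^v$ meets $\alpha_e$ in exactly one point (its number of crossings with $\alpha_e$ being $|g_e(\gamma^v)|$) and spirals around $v$ inside the monogon $\triangle_v^v$; hence $\bar{\gamma}^v$ is an embedded arc in $\mathbb{S}^v\setminus\triangle_v^v$ running from the non-$v$ endpoint of $\gamma^v$, a spiral around a puncture of $\mathbf{G}^v$ other than $v$, to $p^v$, where $\gamma^v$ crosses $\alpha_e$ transversally. As the interiors of $\mathbb{S}^a\setminus\triangle_a^a$ and $\mathbb{S}^b\setminus\triangle_b^b$ are disjoint in $\mathbb{S}$ and $\bar{\gamma}^a,\bar{\gamma}^b$ share only $p$, the curve $\gamma$ is embedded, crosses $\alpha_e$ transversally at $p$, and is well defined up to isotopy rel punctures (the gluing being natural with respect to isotopies of $\gamma^a$ and $\gamma^b$, after sliding $p^a,p^b$ along $\alpha_e$ if needed). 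Condition~(i) of Definition~\ref{def:arc} holds since $\gamma$ crosses $\alpha_e$, and condition~(ii) holds because the two endpoints of $\gamma$ are the non-$v$ endpoints of $\gamma^a$ and $\gamma^b$, which are spirals around punctures of $\mathbf{G}$. For condition~(iii) at an edge $d\in G_1^v\setminus\{e\}$, I would note that in $\mathbf{G}^v$ the only edge at $v$ is $e$, so $v$ is not an endpoint of $d$ and both $\alpha_d$ and the endpoints of $d$ lie in $\mathbb{S}^v\setminus\triangle_v^v$; moreover $\gamma\cap\alpha_d=\gamma^v\cap\alpha_d$ and the local configuration at each such crossing is preserved by the gluing. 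Hence condition~(iii) at $d$ for $\gamma$ is exactly condition~(iii) at $d$ for $\gamma^v$, which holds, and the same comparison shows a crossing is positive (resp. negative) for $\gamma$ iff it is for $\gamma^v$, giving $g_d(\gamma)=g_d(\gamma^v)$ for all $d\in G_1^v\setminus\{e\}$.

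The remaining and only genuinely delicate step, and the one I expect to be the main obstacle, is condition~(iii) at $d=e$, whose sole crossing with $\gamma$ is $p$, together with the sign of that crossing. Near $p$ the loop $\alpha_e$ separates the $\mathbf{G}^a$-side, containing $\triangle_b$, from the $\mathbf{G}^b$-side, containing $\triangle_a$, and $\gamma$ follows $\bar{\gamma}^a$ on the former and $\bar{\gamma}^b$ on the latter. The subtle point is that joining $\bar{\gamma}^a$ to $\bar{\gamma}^b$ at their common endpoint $p$ is a head-to-tail concatenation, so along a fixed traversal of $\gamma$ one of the two germs is run opposite to the orientation it carries inside $\gamma^v$; hence the side of $b$ with respect to $\gamma$ agrees with its side with respect to $\gamma^a$, while the side of $a$ with respect to $\gamma$ is the reverse of its side with respect to $\gamma^b$. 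Now $g_e(\gamma^a)=1$ forces, by Lemma~\ref{lem:external}, a counterclockwise spiral of $\gamma^a$ about $a$, hence $a$ on the left of $\gamma^a$, and then condition~(iii) for $\gamma^a$ at $e$ puts $b$ on the right of $\gamma^a$; symmetrically, $g_e(\gamma^b)=1$ and condition~(iii) for $\gamma^b$ at $e$ put $a$ on the right of $\gamma^b$. Combining with the orientation reversal, $b$ lies on the right and $a$ on the left of $\gamma$ at $p$: they are on opposite sides, which is condition~(iii) for $\gamma$ at $e$, and the crossing $p$ is positive, so $g_e(\gamma)=1=g_e(\gamma^a)=g_e(\gamma^b)$. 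This orientation bookkeeping at $p$ is precisely where the argument could fail — it is what distinguishes $a$ and $b$ landing on opposite sides from landing on the same side — so it is the step I would write out most carefully. Together with the previous paragraph this shows $\gamma$ is a $\mathbf{G}$-arc and establishes $(\ref{eq:glue-g})$.

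As a cross-check, or as a substitute for the last step, I would note that the whole argument can be run combinatorially through Proposition~\ref{prop:sw}: $\gamma^v$ corresponds to a signed walk $w^v$ on $\mathbf{G}^v$ in which $e$ occurs as the terminal edge at $v$ with sign $+$, and the gluing $\gamma^a\underset{e}{\times}\gamma^b$ corresponds to concatenating $w^a$ and $w^b$ along this common terminal edge $e$. One then only needs to see that the concatenation is again a signed walk on $\mathbf{G}$ — consecutive edges share a vertex, the shared vertices at the junction being $b$ and $a$, and the signs alternate, the alternation across $e$ being inherited from $w^a$ and from $w^b$ — after which Proposition~\ref{prop:sw} identifies it with a $\mathbf{G}$-arc, necessarily $\gamma$ by construction, and $(\ref{eq:glue-g})$ is read off edge by edge.
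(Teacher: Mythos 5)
Your proof is correct. The paper's own proof of this Proposition is simply "It is easy to check that $\gamma^a\underset{e}{\times}\gamma^b$ provides a $\mathbf{G}$-arc; the equality is clear from our construction," so what you have done is fill in exactly the verification the paper elides, along the same geometric lines. Your identification of the orientation bookkeeping at the gluing point $p$ as the one non-formal step is the right diagnosis, and the analysis there — that $b$'s side is inherited from $\gamma^a$, $a$'s side is reversed from $\gamma^b$ by the head-to-tail concatenation, and together with Lemma~\ref{lem:external} and condition~(iii) for $\gamma^a,\gamma^b$ this forces $a,b$ onto opposite sides of $\gamma$ with $p$ positive — is sound; the signed-walk reformulation via Proposition~\ref{prop:sw} gives a clean independent confirmation.
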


\begin{proof}
   It is easy to check that $\gamma^a\underset{e}{\times} \gamma^b$ provides a $\mathbf{G}$-arc.
   The equality (\ref{eq:glue-g}) is clear from our construction.
\end{proof}

More generally, we have the following. 
For $v\in \{a,b\}$, let $\mathcal{X}^v$ be an (not necessary reduced) admissible collection of $\mathbf{G}^v$-arcs such that $t_a:=g_{e}(\mathcal{X})>0$. By Lemma \ref{lem:abs}, we have a decomposition 
\begin{equation} \label{eq:pairX}
   \mathcal{X}^v=\{\gamma^v_1,\ldots, \gamma^v_{t_v}\} \cup \{\delta^v_{1},\ldots, \delta^v_{m_v}\},
\end{equation}
where $\gamma^v_i$ (resp., $\delta_i^v$) satisfy $g_e(\gamma_i^v)=1$ for all $i\in [1,t_v]$ (resp., $g_e(\delta_i^v)=0$ for all $v\in [1,m_v]$). In this setting, let $p_i^v$ be intersection points of $\gamma^v_i$ and $\alpha_e$ for $i\in [1,t_v]$. 
Reordering the numbering if necessary, we assume that $p^a_{1}, p^a_{2}, \ldots, p^a_{t_a}$ (resp., $p^b_{1},p^b_2, \ldots, p^b_{t_b}$) lie on the side $\alpha_e$ of $\triangle_a^a$ (resp., $\triangle_b^b$) in the counterclockwise (resp., clockwise) direction around $a$ (resp., $b$).

\begin{proposition} \label{prop:glue nat}
Assume that $t:=t_a=t_b$. We set 
\[
   \mathcal{X}^a\underset{e}{\times}\mathcal{X}^b := \{\gamma^a_i \underset{e}{\times}\gamma^b_i\}_{i=1}^{t} \cup \{\delta^a_i\}_{i=1}^{m_a} \cup \{\delta^b_i\}_{i=1}^{m_b}.  
\] 
Then it forms an admissible collection of $\mathbf{G}$-arcs satisfying
\[ 
   g_d(\mathcal{X}^a\underset{e}{\times}\mathcal{X}^b) = g_d(\mathcal{X}^v)
\] 
for all $d\in G^v$ with $v\in \{a,b\}$. 
\end{proposition}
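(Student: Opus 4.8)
The plan is to verify three statements in turn --- that every element of $\mathcal{X}^a\underset{e}{\times}\mathcal{X}^b$ is a $\mathbf{G}$-arc, that the collection is admissible, and that the $g$-vector identity holds --- of which only the second is substantial. The first is immediate from what precedes: each glued arc $\gamma^a_i\underset{e}{\times}\gamma^b_i$ is a $\mathbf{G}$-arc by the proposition just above, and each $\delta^a_i$ (resp. $\delta^b_i$), being a $\mathbf{G}^a$-arc (resp. $\mathbf{G}^b$-arc), is a $\mathbf{G}$-arc by the proposition identifying $\mathbf{G}^v$-arcs with the $\mathbf{G}$-arcs whose $g$-vector vanishes off $G_1^v$. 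For the $g$-vector identity, recall that $g$ of a collection is the sum of the $g$-vectors of its members. Fix $d\in G_1^a$. By (\ref{eq:glue-g}) we have $g_d(\gamma^a_i\underset{e}{\times}\gamma^b_i)=g_d(\gamma^a_i)$; since $\delta^b_i$ is a $\mathbf{G}^b$-arc and $G_1^a\cap G_1^b=\{e\}$, we have $g_d(\delta^b_i)=0$ when $d\neq e$, while $g_e(\delta^b_i)=0$ directly from the decomposition (\ref{eq:pairX}) when $d=e$; summing the surviving contributions gives $g_d(\mathcal{X}^a\underset{e}{\times}\mathcal{X}^b)=\sum_i g_d(\gamma^a_i)+\sum_i g_d(\delta^a_i)=g_d(\mathcal{X}^a)$. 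Here the hypothesis $t_a=t_b$ is what makes the pairing $i\leftrightarrow i$ of the $\gamma$'s available. The case $d\in G_1^b$ is symmetric.

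For admissibility I would work with the decomposition $\mathbb{S}=(\mathbb{S}^a\setminus\triangle_a^a)\cup_{\alpha_e}(\mathbb{S}^b\setminus\triangle_b^b)$, whose two pieces overlap only along $\alpha_e$. By Lemma \ref{lem:abs}, an arc whose $g$-vector has vanishing $e$-entry does not meet $\alpha_e$; hence each $\delta^a_i$ lies in $\mathbb{S}^a\setminus\triangle_a^a$ and each $\delta^b_i$ in $\mathbb{S}^b\setminus\triangle_b^b$, while each glued arc $\gamma^a_i\underset{e}{\times}\gamma^b_i$ meets $\alpha_e$ exactly once, decomposing there into its restrictions $\bar{\gamma}^a_i\subset\mathbb{S}^a\setminus\triangle_a^a$ and $\bar{\gamma}^b_i\subset\mathbb{S}^b\setminus\triangle_b^b$, each a sub-arc of a member of $\mathcal{X}^a$, resp. $\mathcal{X}^b$. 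Now run through the pairs. Two arcs whose supports on the $a$-side are among the $\bar{\gamma}^a_i$ and $\delta^a_i$ do not intersect there, since these are sub-arcs of pairwise non-intersecting members of the admissible collection $\mathcal{X}^a$; symmetrically on the $b$-side. A $\delta^a_i$ and a $\delta^b_j$ are disjoint because each lives strictly inside its own piece (neither touching $\alpha_e$), and likewise a $\delta^a_i$ is disjoint from the $b$-part $\bar{\gamma}^b_j$ of any glued arc. The only remaining case is two glued arcs with $i\neq j$: an intersection point would lie in the interior of the $a$-piece (impossible, the $a$-sides being disjoint), in the interior of the $b$-piece (impossible likewise), or on $\alpha_e$; but on $\alpha_e$ the first curve has only the point $p_i$ and the second only $p_j$, and $p_i\neq p_j$. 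Hence the collection is admissible, and together with the preceding paragraph this completes the verification.

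The step I expect to be the main obstacle is the geometric input underlying the last case: one must know that after the orientation-preserving gluing along $\alpha_e$, the counterclockwise order of $p^a_1,\dots,p^a_t$ around $a$ and the clockwise order of $p^b_1,\dots,p^b_t$ around $b$ become one and the same cyclic order of $t$ distinct points on $\alpha_e\subset\mathbb{S}$, so that each matched pair $\gamma^a_i,\gamma^b_i$ can be isotoped relative to punctures to cross $\alpha_e$ at a single common point $p_i$ and so that the $t$ glued curves are realized pairwise disjointly in a neighbourhood of $\alpha_e$ --- this is precisely what the reordering hypothesis is set up to guarantee (cf. Figure \ref{fig:glue_arc}). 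It is an orientation-bookkeeping argument at the gluing locus, transparent from a picture but needing some care to phrase rigorously; once granted, the finitely many cases above close the proof. The degenerate situations in which $\mathcal{X}^a$ or $\mathcal{X}^b$ contains repeated arcs are handled by exactly the same argument, since coinciding sub-arcs do not intersect.
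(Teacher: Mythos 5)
Your proof is correct and takes the same approach as the paper, which simply asserts the result is ``clear from our construction'' and points to Figure~\ref{fig:glue_natural}; you have filled in precisely the case analysis (membership, admissibility pair by pair, $g$-vector computation) that the paper leaves to the reader. You also correctly flag the orientation bookkeeping at the gluing locus as the only genuinely geometric ingredient, which is exactly what the figure is meant to convey.
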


\begin{proof}
   It is clear from our construction. See Figure \ref{fig:glue_natural}.
\end{proof}

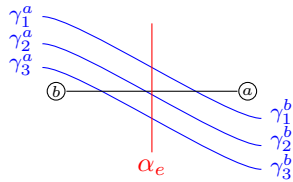
\begin{figure}[htp]   \centering
   \begin{tikzpicture}[scale=0.9]
      \coordinate(u)at(0,0.8); \coordinate(d)at(0,-1);
      \coordinate(llu)at(-1.5,1); \coordinate(lu)at(-0.6,1); \coordinate(rru)at(1.5,1); \coordinate(ru)at(0.6,1);
      \coordinate(lld)at(-1.5,-1.2); \coordinate(ld)at(-0.6,-1.2); \coordinate(rrd)at(1.5,-1.2); \coordinate(rd)at(0.6,-1.2);
      \draw[blue,shift={(0,0)}](-1.6,1.1)..controls(-1.1,1.1)and(1.1,-0.4)..(1.6,-0.4);
      \draw[blue,shift={(0,-0.4)}](-1.6,1.1)..controls(-1.1,1.1)and(1.1,-0.4)..(1.6,-0.4);
      \draw[blue,shift={(0,-0.75)}](-1.6,1.1)..controls(-1.1,1.1)and(1.1,-0.4)..(1.6,-0.4);

      \draw[fill=white](-1.4,0)circle(1.3mm); \draw[fill=white](1.4,0)circle(1.3mm);
      \node[fill=white, inner sep=0.05] at(-1.4,0){\tiny$b$};
      \node[fill=white, inner sep=0.05] at(1.4,0){\tiny$a$};
      \draw (-1.25,0)--(1.25,0);
      \draw[red] (0,1)--(0,-0.9); 
      \node[red] at (0,-1.12) {$\alpha_e$};
      \node[blue] at(-1.9,1.1) {\footnotesize$\gamma_1^a$};
      \node[blue] at(-1.9,0.75) {\footnotesize$\gamma_2^a$};
      \node[blue] at(-1.9,0.4) {\footnotesize$\gamma_3^a$};
      \node[blue] at(1.9,-0.3) {\footnotesize$\gamma_1^b$};
      \node[blue] at(1.9,-0.7) {\footnotesize$\gamma_2^b$};
      \node[blue] at(1.9,-1.1) {\footnotesize$\gamma_3^b$};
   \end{tikzpicture} 
\caption{A local figure for gluing $\{\gamma_1^a,\gamma_2^a,\gamma_3^a\}$ and $\{\gamma_1^b,\gamma_2^b,\gamma_3^b\}$ with $t_a = t_b = 3$.}
\label{fig:glue_natural}
\end{figure}

Next, we would like to construct a collection of pairwise distinct and pairwise admissible $\mathbf{G}$-arcs as many as possible from a given pair of reduced collections on $\mathbf{G}^v$. This is achieved by using combinatorial objects called lattice paths.

\begin{definition} \label{def:latticepath}
Let $s,t$ be positive integers.
For two elements $(j_1,k_1)$ and $(j_2,k_2)$ in $[1,s]\times[1,t]$, we say that they are \textit{compatible} if one of (i) $j_1\geq j_2$ and $k_1\geq k_2$ or (ii) $j_1\leq j_2$ and $k_2\leq k_1$ holds. A \textit{lattice path} of $[1,s]\times[1,t]$ is a maximal set of pairwise compatible elements of $[1,s]\times[1,t]$. 
\end{definition}
   
We denote by $\mathrm{P}(s,t)$ the set of lattice paths of $[1,s]\times[1,t]$. 
It is well-known that the cardinality of $\mathrm{P}(s,t)$ is $\binom{s+t-2}{s-1}=\binom{s+t-2}{t-1}$. In addition, every $P\in \mathrm{P}(s,t)$ consists of precisely $s+t-1$ elements and contains $(1,1)$ and $(s,t)$ from its maximality. We write 
\[
   P(j,-):=\#\{(j,b)\in P \mid b\in [1,t]\} \quad \text{and} \quad 
   P(-,k):=\#\{(a,k)\in P \mid a\in [1,s]\} 
\]
for $j\in [1,s]$ and $k\in [1,t]$ respectively.

\begin{proposition} \label{prop:gluepath}
Assume that $\mathcal{X}^v$ in (\ref{eq:pairX}) is reduced for $v\in \{a,b\}$. 
For each lattice path $P\in \mathrm{P}(t_a,t_b)$, we set
\[
   \mathcal{X}^a\underset{e_P}{\times} \mathcal{X}^b:= \{\gamma^a_j\underset{e}{\times} \gamma^b_k \mid (j,k)\in P\} \cup \{\delta^a_i\}_{i=1}^{m_a} \cup \{\delta^b_i\}_{i=1}^{m_b}. 
\]    
Then it forms a reduced collection of $\mathbf{G}$-arcs whose $g$-vector is given by
\begin{equation} \label{eq:gglue}
   g_d(\mathcal{X}^a\underset{e_P}{\times} \mathcal{X}^b) = 
   \begin{cases}
      \displaystyle \sum_{j=1}^{t_a}g_d(\gamma^a_j)P(j,-) + \sum_{i=1}^{m_a}g_d(\delta_i^a) & \text{if $d\in G_1^a$}, \\
      \displaystyle \sum_{k=1}^{t_b}g_d(\gamma^b_k)P(-,k) + \sum_{i=1}^{m_b}g_d(\delta_i^b) & \text{if $d\in G_1^b$}. 
   \end{cases}
\end{equation}
Furthermore, it is complete if both $\mathcal{X}^a,\mathcal{X}^b$ are complete on $\mathbf{G}^a, \mathbf{G}^b$ respectively.
\end{proposition}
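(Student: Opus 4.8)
The plan is to check, in order, that every member of $\mathcal{X}^a\underset{e_P}{\times}\mathcal{X}^b$ is a $\mathbf{G}$-arc, that the collection is admissible, that its members are pairwise distinct (so that it is reduced), that its $g$-vector is given by \eqref{eq:gglue}, and finally that it is complete whenever $\mathcal{X}^a$ and $\mathcal{X}^b$ are. The first point is immediate: each $\gamma^a_j\underset{e}{\times}\gamma^b_k$ is a $\mathbf{G}$-arc by the proposition preceding Proposition~\ref{prop:glue nat}, applied to $\gamma^a_j$ and $\gamma^b_k$ (both have $g_e=1$), and each $\delta^v_i$, being a $\mathbf{G}^v$-arc, is a $\mathbf{G}$-arc because the $\mathbf{G}^v$-arcs are precisely the $\mathbf{G}$-arcs that vanish on $G_1\setminus G_1^v$.

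The heart of the argument, and the step I expect to be the main obstacle, is admissibility. To realise the collection concretely I would thicken each $\bar\gamma^a_j$ (resp. each $\bar\gamma^b_k$) into $P(j,-)$ (resp. $P(-,k)$) pairwise disjoint parallel copies, all meeting $\alpha_e$ in small arcs near $p^a_j$ (resp. near $p^b_k$) and all spiralling, in nested fashion, around the relevant puncture; since $\mathcal{X}^a$ and $\mathcal{X}^b$ are reduced, no two of these copies meet on the $\mathbf{G}^a$-side or on the $\mathbf{G}^b$-side, and none of them meets any $\delta^v_i$ there. It then remains to prescribe, for each $(j,k)\in P$, a single gluing point on $\alpha_e$ so that the resulting $t_a+t_b-1$ arcs cross $\alpha_e$ without mutual intersection. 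Here is where the lattice path enters: list the elements of $P$ in increasing product order, which is possible since $P$ is a maximal chain, obtaining a sequence in which both coordinates are weakly increasing and each intermediate value of each coordinate occurs at least once. Because $p^a_1,\dots,p^a_{t_a}$ were placed along $\alpha_e$ counterclockwise around $a$ and $p^b_1,\dots,p^b_{t_b}$ clockwise around $b$, and these two circular orders on $\alpha_e$ agree after the orientation-preserving gluing, this product-order listing prescribes one common circular order of the gluing points that simultaneously respects the grouping forced from the $\mathbf{G}^a$-side (consecutive gluing points for copies of a fixed $\gamma^a_j$) and from the $\mathbf{G}^b$-side (consecutive gluing points for copies of a fixed $\gamma^b_k$); with the gluing points so arranged, the parallel strands can be joined across $\alpha_e$ with no crossings (see Figures~\ref{fig:glue_arc} and~\ref{fig:glue_natural}). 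Thus $\mathcal{X}^a\underset{e_P}{\times}\mathcal{X}^b$ is admissible.

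Pairwise distinctness follows from $g$-vectors. If $(j,k)\neq(j',k')$, then $j\neq j'$ or $k\neq k'$; say $j\neq j'$, so $\gamma^a_j\neq\gamma^a_{j'}$ and hence $g(\gamma^a_j)\neq g(\gamma^a_{j'})$ by Lemma~\ref{lem:g-invarc}, and since both vanish off $G_1^a$ they already differ in some coordinate $d\in G_1^a\setminus\{e\}$; by \eqref{eq:glue-g} the two glued arcs then differ in the $d$-th coordinate of their $g$-vectors, so they are distinct, and symmetrically if $k\neq k'$. Moreover each glued arc has $g_e=1$ by \eqref{eq:glue-g} whereas each $\delta^v_i$ has $g_e=0$, and $\delta^a_i\neq\delta^b_{i'}$ because an arc never has $g$-vector $0$ (it crosses some $\alpha_d$). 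Hence the collection is reduced. The $g$-vector formula \eqref{eq:gglue} is now bookkeeping: for $d\in G_1^a\setminus\{e\}$ one has $g_d(\delta^b_i)=0$ and $g_d(\gamma^a_j\underset{e}{\times}\gamma^b_k)=g_d(\gamma^a_j)$ by \eqref{eq:glue-g}, so the glued arcs contribute $\sum_{(j,k)\in P}g_d(\gamma^a_j)=\sum_{j=1}^{t_a}g_d(\gamma^a_j)\,P(j,-)$, and adding $\sum_i g_d(\delta^a_i)$ gives the stated value; the case $d\in G_1^b\setminus\{e\}$ is symmetric, and for $d=e$ both expressions reduce to $\sum_j P(j,-)=\sum_k P(-,k)=|P|=t_a+t_b-1$ since $g_e(\gamma^a_j)=g_e(\gamma^b_k)=1$, so the two formulas agree on the overlap $G_1^a\cap G_1^b=\{e\}$.

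Finally, if $\mathcal{X}^a$ and $\mathcal{X}^b$ are complete, then by Proposition~\ref{prop:partialarc} they have $|\mathbf{G}^a|=t_a+m_a$ and $|\mathbf{G}^b|=t_b+m_b$ elements, so $\mathcal{X}^a\underset{e_P}{\times}\mathcal{X}^b$ has $(t_a+t_b-1)+m_a+m_b=|\mathbf{G}^a|+|\mathbf{G}^b|-1=|\mathbf{G}|=n$ elements, using $G_1^a\cup G_1^b=G_1$ and $G_1^a\cap G_1^b=\{e\}$. Being a reduced collection of $\mathbf{G}$-arcs with exactly $n$ elements, it is complete by Proposition~\ref{prop:partialarc}.
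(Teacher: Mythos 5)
Your proof is correct and follows essentially the same geometric approach as the paper: you thicken each $\bar\gamma^a_j$ into $P(j,-)$ parallel strands and each $\bar\gamma^b_k$ into $P(-,k)$ strands and then glue them along $\alpha_e$ according to the order dictated by the lattice path, which is precisely the paper's construction of the multiplicity collections $\mathcal{X}^{a(P)}$, $\mathcal{X}^{b(P)}$ followed by the natural gluing of Proposition~\ref{prop:glue nat}. The only (minor) difference is that you reconstruct the admissibility argument directly rather than delegating to Proposition~\ref{prop:glue nat}, and you spell out reducedness via $g$-vectors where the paper says ``by our construction''.
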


\begin{proof}
   Consider collections 
   \begin{eqnarray}
      \mathcal{X}^{a(P)} &:=& \{\gamma_1^{a(1)},\ldots,\gamma_1^{a(P(1,-))},\ldots, \gamma_{t_a}^{a(1)},\ldots,\gamma_{t_a}^{a(P(t_a,-))}\} \cup \{\delta_1^a,\ldots, \delta_{m_a}^a\}, \\
      \mathcal{X}^{b(P)} &:=& \{\gamma_1^{b(1)},\ldots,\gamma_1^{b(P(-,1))},\ldots, \gamma_{t_b}^{b(1)},\ldots,\gamma_{t_b}^{b(P(-,t_b))}\}  \cup \{\delta_1^b,\ldots, \delta_{m_b}^b\},
   \end{eqnarray}
   where $\gamma_j^{a(x)}$ (resp., $\gamma_k^{b(y)}$) are copies of $\gamma_j^a$ for all $j\in [1,t_a]$ and $x\in [1,P(j,-)]$ (resp., $\gamma_k^b$ for all $k\in [1,t_b]$ and $y\in [1,P(-,k)]$).  
   Since $g_e(\mathcal{X}^{a(P)})= g_e(\mathcal{X}^{b(P)})=\#P=t_a+t_b-1$ holds, they provide a collection $\mathcal{X}^{a(P)} \underset{e}{\times} \mathcal{X}^{b(P)}$ by Proposition \ref{prop:glue nat}. 
   On the other hand, from the ordering of intersection points on $\alpha_e$, one can see that $\gamma_j^a\underset{e}{\times}\gamma_k^b$ lies in $\mathcal{X}^{a(P)} \underset{e}{\times} \mathcal{X}^{b(P)}$ if and only if $(j,k)\in P$. That is, 
   \[
      \mathcal{X}^{a(P)}\underset{e}{\times}\mathcal{X}^{b(P)} = \mathcal{X}^a\underset{e_P}{\times}\mathcal{X}^b.
   \]
   By our construction, this is a reduced collection of arcs whose $g$-vector is given by (\ref{eq:gglue}). 

   Now, we assume that $\mathcal{X}^v$ is complete on $\mathbf{G}^v$ for $v\in \{a,b\}$. 
   Then a collection $\mathcal{X}^a\underset{e_P}{\times}\mathcal{X}^b$ is complete by Proposition \ref{prop:partialarc} since 
   \[
      \#(\mathcal{X}^a\underset{e_P}{\times}\mathcal{X}^b) = (\#\mathcal{X}^a-t_a) + (\#\mathcal{X}^b-t_b) + (t_a+t_b-1) = |\mathbf{G}^a| + |\mathbf{G}^b| - 1 =  |\mathbf{G}|. 
   \]
   Here, we use equations $\#\mathcal{X}^v = |\mathbf{G}^v|$ for $v\in \{a,b\}$ by Proposition \ref{prop:partialarc}. It finishes a proof.
\end{proof}

\begin{figure}[htp]   \centering
   \renewcommand{\arraystretch}{1.5}
   \setlength{\tabcolsep}{1mm} 
   \begin{tabular}{cccccccc}
      \begin{tikzpicture}[baseline=0mm,scale=0.85]
         \coordinate(u)at(0,0.8); \coordinate(d)at(0,-1);
         \coordinate(llu)at(-1.5,1); \coordinate(lu)at(-0.6,1); \coordinate(rru)at(1.5,1); \coordinate(ru)at(0.6,1);
         \coordinate(lld)at(-1.5,-1.2); \coordinate(ld)at(-0.6,-1.2); \coordinate(rrd)at(1.5,-1.2); \coordinate(rd)at(0.6,-1.2);
         \draw[blue,shift={(0,0)}](-1.6,1.1)..controls(-1.1,1.1)and(1.1,-0.75)..(1.6,-0.75);
         \draw[blue,shift={(0,0.35)}](-1.6,1.1)..controls(-1.1,1.1)and(1.1,-0.75)..(1.6,-0.75);
         \draw[blue,shift={(0,-0.35)}](-1.6,1.1)..controls(-1.1,1.1)and(1.1,-0.75)..(1.6,-0.75);
         \draw[blue,shift={(0,-0.7)}](-1.6,1.1)..controls(-1.1,1.1)and(1.1,-0.75)..(1.6,-0.75);
         

         \draw[fill=white](-1.4,0)circle(1.3mm); \draw[fill=white](1.4,0)circle(1.3mm);
         \node[fill=white, inner sep=0.05] at(-1.4,0){\tiny$b$};
         \node[fill=white, inner sep=0.05] at(1.4,0){\tiny$a$};
         \draw (-1.25,0)--(1.25,0);
         \draw[red] (0,1)--(0,-0.9); 
         \node[red] at (0,-1.12) {\footnotesize$\alpha_e$};

         \node[blue] at(-1.9,1.45) {\tiny$\gamma_1^a$};
         \node[blue] at(-1.9,1.1) {\tiny$\gamma_1^a$};
         \node[blue] at(-1.9,0.75) {\tiny$\gamma_2^a$};
         \node[blue] at(-1.9,0.4) {\tiny$\gamma_3^a$};
      
         \node[blue] at(1.9,-0.3) {\tiny$\gamma_1^b$};
         \node[blue] at(1.9,-0.7) {\tiny$\gamma_2^b$};
         \node[blue] at(1.9,-1.1) {\tiny$\gamma_2^b$};
         \node[blue] at(1.9,-1.5) {\tiny$\gamma_2^b$};
         
      \end{tikzpicture} 
      &
      \begin{tikzpicture}[baseline=0mm,scale=0.85]
         \coordinate(u)at(0,0.8); \coordinate(d)at(0,-1);
         \coordinate(llu)at(-1.5,1); \coordinate(lu)at(-0.6,1); \coordinate(rru)at(1.5,1); \coordinate(ru)at(0.6,1);
         \coordinate(lld)at(-1.5,-1.2); \coordinate(ld)at(-0.6,-1.2); \coordinate(rrd)at(1.5,-1.2); \coordinate(rd)at(0.6,-1.2);
         \draw[blue,shift={(0,0)}](-1.6,1.1)..controls(-1.1,1.1)and(1.1,-0.75)..(1.6,-0.75);
         \draw[blue,shift={(0,0.35)}](-1.6,1.1)..controls(-1.1,1.1)and(1.1,-0.75)..(1.6,-0.75);
         \draw[blue,shift={(0,-0.35)}](-1.6,1.1)..controls(-1.1,1.1)and(1.1,-0.75)..(1.6,-0.75);
         \draw[blue,shift={(0,-0.7)}](-1.6,1.1)..controls(-1.1,1.1)and(1.1,-0.75)..(1.6,-0.75);
         

         \draw[fill=white](-1.4,0)circle(1.3mm); \draw[fill=white](1.4,0)circle(1.3mm);
         \node[fill=white, inner sep=0.05] at(-1.4,0){\tiny$b$};
         \node[fill=white, inner sep=0.05] at(1.4,0){\tiny$a$};
         \draw (-1.25,0)--(1.25,0);
         \draw[red] (0,1)--(0,-0.9); 
         \node[red] at (0,-1.12) {\footnotesize$\alpha_e$};

         \node[blue] at(-1.9,1.45) {\tiny$\gamma_1^a$};
         \node[blue] at(-1.9,1.1) {\tiny$\gamma_2^a$};
         \node[blue] at(-1.9,0.75) {\tiny$\gamma_2^a$};
         \node[blue] at(-1.9,0.4) {\tiny$\gamma_3^a$};
      
         \node[blue] at(1.9,-0.3) {\tiny$\gamma_1^b$};
         \node[blue] at(1.9,-0.7) {\tiny$\gamma_1^b$};
         \node[blue] at(1.9,-1.1) {\tiny$\gamma_2^b$};
         \node[blue] at(1.9,-1.5) {\tiny$\gamma_2^b$};
         
      \end{tikzpicture}  &
      \begin{tikzpicture}[baseline=0mm,scale=0.85]
         \coordinate(u)at(0,0.8); \coordinate(d)at(0,-1);
         \coordinate(llu)at(-1.5,1); \coordinate(lu)at(-0.6,1); \coordinate(rru)at(1.5,1); \coordinate(ru)at(0.6,1);
         \coordinate(lld)at(-1.5,-1.2); \coordinate(ld)at(-0.6,-1.2); \coordinate(rrd)at(1.5,-1.2); \coordinate(rd)at(0.6,-1.2);
         \draw[blue,shift={(0,0)}](-1.6,1.1)..controls(-1.1,1.1)and(1.1,-0.75)..(1.6,-0.75);
         \draw[blue,shift={(0,0.35)}](-1.6,1.1)..controls(-1.1,1.1)and(1.1,-0.75)..(1.6,-0.75);
         \draw[blue,shift={(0,-0.35)}](-1.6,1.1)..controls(-1.1,1.1)and(1.1,-0.75)..(1.6,-0.75);
         \draw[blue,shift={(0,-0.7)}](-1.6,1.1)..controls(-1.1,1.1)and(1.1,-0.75)..(1.6,-0.75);
         

         \draw[fill=white](-1.4,0)circle(1.3mm); \draw[fill=white](1.4,0)circle(1.3mm);
         \node[fill=white, inner sep=0.05] at(-1.4,0){\tiny$b$};
         \node[fill=white, inner sep=0.05] at(1.4,0){\tiny$a$};
         \draw (-1.25,0)--(1.25,0);
         \draw[red] (0,1)--(0,-0.9); 
         \node[red] at (0,-1.12) {\footnotesize$\alpha_e$};

         \node[blue] at(-1.9,1.45) {\tiny$\gamma_1^a$};
         \node[blue] at(-1.9,1.1) {\tiny$\gamma_2^a$};
         \node[blue] at(-1.9,0.75) {\tiny$\gamma_3^a$};
         \node[blue] at(-1.9,0.4) {\tiny$\gamma_3^a$};
      
         \node[blue] at(1.9,-0.3) {\tiny$\gamma_1^b$};
         \node[blue] at(1.9,-0.7) {\tiny$\gamma_1^b$};
         \node[blue] at(1.9,-1.1) {\tiny$\gamma_1^b$};
         \node[blue] at(1.9,-1.5) {\tiny$\gamma_2^b$};
         
      \end{tikzpicture}  \\ 
      \begin{tikzpicture}[baseline=0mm,scale=0.85]
         \draw[dotted] (-1.5,0)--(-1.5,-1.5)--(0,-1.5)--(1.5,-1.5)--(1.5,0)--(-1.5,0) (0,0)--(0,-1.5);
         \draw[fill=white](-1.5,0)circle(1mm); 
         \draw[fill=white](0,0)circle(1mm);
         \draw[fill=white](1.5,0)circle(1mm);
         \draw[fill=white](-1.5,-1.5)circle(1mm);
         \draw[fill=white](0,-1.5)circle(1mm);
         \draw[fill=white](1.5,-1.5)circle(1mm); 
         \draw[very thick] (-1.5,-1.5)--(-1.5,0)--(0,0)--(1.5,0);
         \node at(-1.5,-1.9){\small{$1$}};
         \node at(0,-1.9){\small{$2$}};
         \node at(1.5,-1.9){\small{$3$}};
         \node at(-1.9,-1.5){\small{$1$}};
         \node at(-1.9,0){\small{$2$}};
      \end{tikzpicture} &
      \begin{tikzpicture}[baseline=0mm,scale=0.85]
         \draw[dotted] (-1.5,0)--(-1.5,-1.5)--(0,-1.5)--(1.5,-1.5)--(1.5,0)--(-1.5,0) (0,0)--(0,-1.5);
         \draw[fill=white](-1.5,0)circle(1mm); 
         \draw[fill=white](0,0)circle(1mm);
         \draw[fill=white](1.5,0)circle(1mm);
         \draw[fill=white](-1.5,-1.5)circle(1mm);
         \draw[fill=white](0,-1.5)circle(1mm);
         \draw[fill=white](1.5,-1.5)circle(1mm); 
         \draw[very thick] (-1.5,-1.5)--(0,-1.5)--(0,0)--(1.5,0);
         \node at(-1.5,-1.9){\small{$1$}};
         \node at(0,-1.9){\small{$2$}};
         \node at(1.5,-1.9){\small{$3$}};
         \node at(-1.9,-1.5){\small{$1$}};
         \node at(-1.9,0){\small{$2$}};
      \end{tikzpicture} &
      \begin{tikzpicture}[baseline=0mm,scale=0.85]
         \draw[dotted] (-1.5,0)--(-1.5,-1.5)--(0,-1.5)--(1.5,-1.5)--(1.5,0)--(-1.5,0) (0,0)--(0,-1.5);
         \draw[fill=white](-1.5,0)circle(1mm); 
         \draw[fill=white](0,0)circle(1mm);
         \draw[fill=white](1.5,0)circle(1mm);
         \draw[fill=white](-1.5,-1.5)circle(1mm);
         \draw[fill=white](0,-1.5)circle(1mm);
         \draw[fill=white](1.5,-1.5)circle(1mm); 
         \draw[very thick] (-1.5,-1.5)--(0,-1.5)--(1.5,-1.5)--(1.5,0);
         \node at(-1.5,-1.9){\small{$1$}};
         \node at(0,-1.9){\small{$2$}};
         \node at(1.5,-1.9){\small{$3$}};
         \node at(-1.9,-1.5){\small{$1$}};
         \node at(-1.9,0){\small{$2$}};
      \end{tikzpicture} &
   
   \end{tabular}
   \caption{Gluing $\{\gamma_1^a,\gamma_2^a,\gamma_3^a\}$ and $\{\gamma_1^b,\gamma_2^b\}$ via lattice paths of $\{1,2,3\}\times\{1,2\}$.}
   \label{Part4:f_gluing example}
   \end{figure}
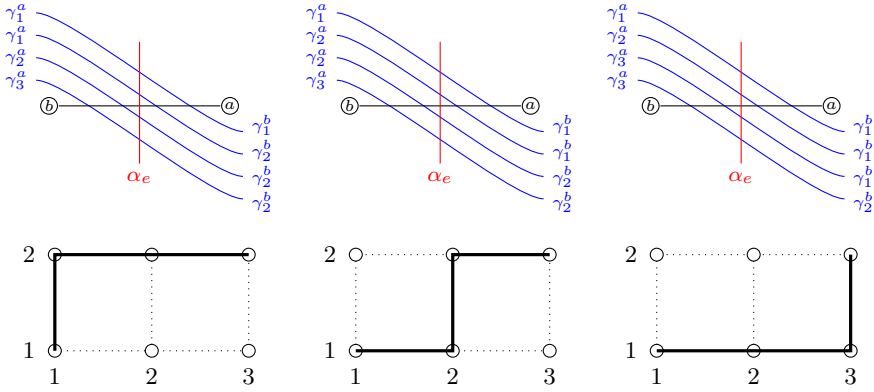

By the bijection $(-)^{\rm op}$ in  Proposition \ref{prop:opp}, $\mathcal{X}^a\underset{e_P}{\times} \mathcal{X}^b$ provides the complete collection on $\mathbf{G}^{\rm op}$ whose $g$-vector is given as $-g(\mathcal{X}^a\underset{e_P}{\times} \mathcal{X}^b)$. It commutes the following diagram: 
\[
   \xymatrix{
      (\mathcal{X}^a,\mathcal{X}^b,P) \ar@{|->}[r] \ar@{|->}[d]& \mathcal{X}^a\underset{e_P}{\times} \mathcal{X}^b \ar@{|->}[d] \\ 
      ((\mathcal{X}^a)^{\rm op}, (\mathcal{X}^b)^{\rm op},P) \ar@{|->}[r] & (\mathcal{X}^a)^{\rm op}\underset{e_P}{\times}(\mathcal{X}^b)^{\rm op}:= (\mathcal{X}^a\underset{e_P}{\times} \mathcal{X}^b)^{\rm op}. 
   }
\]

\begin{theorem} \label{thm:gluing}
   Let $n_v:=|\mathbf{G}^v|$ for $v\in \{a,b\}$.
   For any $s\in [1,n_a]$ and any $t\in [1,n_b]$, we have injective maps 
   \begin{eqnarray} 
      \rho_e^{s,t} \colon &\mathcal{A}(\mathbf{G}^a)_e^s\times \mathcal{A}(\mathbf{G}^b)_e^t \times \mathrm{P}(s,t) &\to \mathcal{A}(\mathbf{G})_e^{s+t-1} \quad \text{and} \label{eq:pos}\\ 
      \rho_e^{-s,-t} \colon &\mathcal{A}(\mathbf{G}^a)_e^{-s}\times \mathcal{A}(\mathbf{G}^b)_e^{-t} \times \mathrm{P}(s,t) &\to \mathcal{A}(\mathbf{G})_e^{-s-t+1} \label{eq:neg}
   \end{eqnarray}
   mapping $(\mathcal{X},\mathcal{Y},P)\mapsto \mathcal{X}\underset{e_P}{\times} \mathcal{Y}$. 
   Moreover, for each $j\in [1,n]$, we have decompositions
   \begin{equation} \label{eq:decomp}
      \mathcal{A}(\mathbf{G})_e^{j} = \bigsqcup_{\substack{s\in [1,n_a] \\ t\in [1,n_b]\\ j=s+t-1}} \Image \rho_e^{s,t} \quad \text{and} \quad 
      \mathcal{A}(\mathbf{G})_e^{-j} = \bigsqcup_{\substack{ s \in [1,n_a] \\ t \in [1,n_b]\\ j=s+t-1}} \Image \rho_e^{-s,-t}
   \end{equation}
\end{theorem}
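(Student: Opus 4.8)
The plan is to establish the two assertions — injectivity of $\rho_e^{s,t}$ and the decomposition of $\mathcal{A}(\mathbf{G})_e^{j}$ — by leveraging Proposition~\ref{prop:gluepath} for well-definedness and the $g$-vector formula~(\ref{eq:gglue}), together with the rigidity statement Lemma~\ref{lem:g-invarc} (admissible collections are determined by their $g$-vectors). First I would check that $\rho_e^{s,t}$ is well-defined: given $\mathcal{X}\in\mathcal{A}(\mathbf{G}^a)_e^s$, $\mathcal{Y}\in\mathcal{A}(\mathbf{G}^b)_e^t$ and $P\in\mathrm{P}(s,t)$, Proposition~\ref{prop:gluepath} (applied with $t_a=s$, $t_b=t$) says $\mathcal{X}\underset{e_P}{\times}\mathcal{Y}$ is a complete collection of $\mathbf{G}$-arcs, and its $e$-th $g$-vector entry, by~(\ref{eq:gglue}), is $\sum_j g_e(\gamma_j^a)P(j,-)=\sum_j P(j,-)=\#P=s+t-1$; so indeed the target is $\mathcal{A}(\mathbf{G})_e^{s+t-1}$. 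The negative case~(\ref{eq:neg}) follows verbatim by applying $(-)^{\mathrm{op}}$ through the commutative diagram displayed just before the theorem, which identifies $\rho_e^{-s,-t}$ with the composite of $\rho_e^{s,t}$ for $\mathbf{G}^{\mathrm{op}}$ with the canonical bijections $\mathcal{A}(\mathbf{G}^v)_e^s\cong\mathcal{A}((\mathbf{G}^v)^{\mathrm{op}})_e^{-s}$ and $\mathcal{A}(\mathbf{G})_e^{-j}\cong\mathcal{A}(\mathbf{G}^{\mathrm{op}})_e^{j}$; hence it suffices to treat the positive case throughout.

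\emph{Injectivity.} Suppose $\mathcal{X}\underset{e_P}{\times}\mathcal{Y}=\mathcal{X}'\underset{e_{P'}}{\times}\mathcal{Y}'$ as collections of $\mathbf{G}$-arcs. I would recover each factor from the common glued collection $\mathcal{Z}$. The arcs of $\mathcal{Z}$ with $g_d=0$ for all $d\in G_1\setminus G_1^a$ are exactly the $\mathbf{G}^a$-arcs among them (by the Proposition preceding Proposition~7.29's analogue here, i.e. the first Proposition of Section~\ref{sec:glue}); restricting those and the "half" arcs $\gamma_j^a\underset{e}{\times}\gamma_k^b$ back across $\alpha_e$ reconstructs a collection on $\mathbf{G}^a$ whose $g$-vector, by~(\ref{eq:gglue}), agrees with $g(\mathcal{X})$ — more precisely each $\gamma_j^a\underset{e}{\times}\gamma_k^b$ restricts to $\gamma_j^a$ on the $\mathbf{G}^a$ side, and the multiplicity with which $\gamma_j^a$ occurs among these restrictions is $P(j,-)$, so the total $\mathbf{G}^a$-side $g$-vector is $\sum_j g(\gamma_j^a)P(j,-)+\sum_i g(\delta_i^a)$, which is precisely $g$ of the multiset "$\gamma_j^a$ with multiplicity $P(j,-)$, plus the $\delta_i^a$". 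Since $\mathcal{X}$ is complete, hence (by Proposition~\ref{prop:partialarc}) reduced with $n_a$ elements, and since admissible collections are $g$-determined, $\mathcal{X}$ — as the \emph{underlying reduced set} of that multiset — is uniquely determined by $\mathcal{Z}$; likewise $\mathcal{Y}$. Finally $P$ is recovered as $\{(j,k)\mid \gamma_j^a\underset{e}{\times}\gamma_k^b\in\mathcal{Z}\}$, which is intrinsic to $\mathcal{Z}$ once $\mathcal{X},\mathcal{Y}$ (hence the indexings $\gamma_j^a,\gamma_k^b$) are fixed. This gives a left inverse to $\rho_e^{s,t}$, proving injectivity.

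\emph{The decomposition.} For fixed $j\in[1,n]$, that each $\Image\rho_e^{s,t}$ (with $s+t-1=j$) lies in $\mathcal{A}(\mathbf{G})_e^{j}$ is the well-definedness already checked. Disjointness across different $(s,t)$ follows because the $g$-vector of $\mathcal{X}\underset{e_P}{\times}\mathcal{Y}$ restricted to $G_1^a$ — which by~(\ref{eq:gglue}) equals $g(\mathcal{X})$ on that index set and in particular has $e$-entry $s$ — is intrinsic, so $s$ (and then $t=j+1-s$) is determined by the glued collection via $g_e(\mathcal{Z}|_{G_1^a})$; I would make this precise by noting that $s$ is the number of arcs in $\mathcal{Z}$ meeting $\alpha_e$, split according to which side of $\alpha_e$ their "$\mathbf{G}^a$-spiral" lies. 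Surjectivity is the substantive point: given $\mathcal{Z}\in\mathcal{A}(\mathbf{G})_e^{j}$ with $j\ge 1$, each arc of $\mathcal{Z}$ crossing $\alpha_e$ does so exactly once (Lemma~\ref{lem:abs}) and positively (since $g_e(\mathcal{Z})=j>0$ forces all such crossings positive); cutting $\mathbb{S}$ along $\alpha_e$ severs each such arc into a $\mathbf{G}^a$-piece and a $\mathbf{G}^b$-piece, each of which spirals around $a$ resp.\ $b$ counterclockwise and thus closes up (after the "cap-off" across $\triangle_a^a$, $\triangle_b^b$) to a genuine $\mathbf{G}^a$-arc with $g_e=1$ resp.\ $\mathbf{G}^b$-arc with $g_e=1$; arcs of $\mathcal{Z}$ not crossing $\alpha_e$ are already $\mathbf{G}^a$- or $\mathbf{G}^b$-arcs with $g_e=0$. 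Collecting these defines $\mathcal{X}$, $\mathcal{Y}$ after deleting repetitions, and $P:=\{(j,k)\mid \gamma_j^a\underset{e}{\times}\gamma_k^b\text{ appears}\}$; the key combinatorial claim is that $P$ is a lattice path, which holds because the original arcs of $\mathcal{Z}$ are pairwise non-crossing, so the cyclic orders of the cut-endpoints $p_i^a$ on $\alpha_e$ (counterclockwise around $a$) and $p_i^b$ (clockwise around $b$) are compatible exactly in the sense of Definition~\ref{def:latticepath}, and maximality of $P$ follows from completeness of $\mathcal{Z}$ (it has $n=n_a+n_b-1$ elements, forcing $\#P=n_a+n_b-1-(|\mathcal{X}|-s)-(|\mathcal{Y}|-t)$... indeed $\#P=s+t-1$, forcing maximality). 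Then $\mathcal{Z}=\rho_e^{s,t}(\mathcal{X},\mathcal{Y},P)$ by Lemma~\ref{lem:g-invarc}, comparing $g$-vectors via~(\ref{eq:gglue}).

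\emph{Main obstacle.} The delicate step is the surjectivity argument, specifically verifying that the index set $P$ arising from an arbitrary complete non-crossing collection $\mathcal{Z}$ on $\mathbf{G}$ is genuinely a lattice path — i.e.\ that non-crossing of the $\mathbf{G}$-arcs translates \emph{exactly} into pairwise compatibility of pairs $(j,k)$ under the chosen orderings of intersection points on $\alpha_e$, and that completeness of $\mathcal{Z}$ forces $P$ to be \emph{maximal} among compatible sets rather than merely compatible. This requires a careful analysis of how two arcs that both cross $\alpha_e$ can or cannot intersect near $\alpha_e$ versus away from it, using that $\mathcal{X}$, $\mathcal{Y}$ are themselves admissible; everything else is bookkeeping with the $g$-vector formula~(\ref{eq:gglue}) and the rigidity Lemma~\ref{lem:g-invarc}.
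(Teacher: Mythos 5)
Your proposal is correct and follows essentially the same route as the paper: well-definedness and the target computation come from Proposition~\ref{prop:gluepath} and the $g$-vector formula~(\ref{eq:gglue}); injectivity is a reconstruction argument (the paper dispatches this by citing Lemma~\ref{lem:g-invarc}, which is really a shorthand for the same reconstruction); and the substance is the surjectivity statement, which both you and the paper establish by restricting each $\mathbf{G}$-arc in a complete collection $\mathcal{Z}$ to a $\mathbf{G}^a$-arc and a $\mathbf{G}^b}$-arc. The paper phrases this restriction combinatorially (truncating the signed walk $w_\gamma$ to its maximal subsequence of $\mathbf{G}^v$-edges), whereas you phrase it geometrically (cutting along $\alpha_e$ and capping off); these are the same operation. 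Your observation that $\#P=s+t-1$ forces $P$ to be a maximal compatible set is exactly the right finishing move, and the paper leaves this just as implicit as you do.

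One small inaccuracy worth correcting in the disjointness discussion: the $g$-vector of $\mathcal{X}\underset{e_P}{\times}\mathcal{Y}$ restricted to $G_1^a$ does \emph{not} equal $g(\mathcal{X})$ — formula~(\ref{eq:gglue}) weights each $\gamma_j^a$ by the multiplicity $P(j,-)$, so in particular $g_e(\mathcal{Z})=s+t-1$, not $s$; and the number of arcs of $\mathcal{Z}$ meeting $\alpha_e$ is $\#P=s+t-1$, not $s$. What is true, and what you actually need, is that $s$ is the number of \emph{distinct} $\mathbf{G}^a$-restrictions of arcs of $\mathcal{Z}$ crossing $\alpha_e$, which is intrinsic to $\mathcal{Z}$; this is already implicit in your injectivity argument (the underlying reduced set of the restricted multiset is $\mathcal{X}$, so $s=g_e(\mathcal{X})$ is recovered), so disjointness of the images over different $(s,t)$ is a corollary of the reconstruction rather than needing the flawed $g$-vector identity. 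You and the paper both implicitly invoke sign-coherence (all crossings of $\alpha_e$ by arcs of a complete collection with $g_e(\mathcal{Z})>0$ are positive) to guarantee $t_a,t_b\geq1$; this is a standard consequence of the $\tau$-tilting framework already in play, but neither proof flags it explicitly.
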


\begin{proof}
   By Proposition \ref{prop:gluepath}, we have maps (\ref{eq:pos}) and (\ref{eq:neg}), both of which are injective by Lemma \ref{lem:g-invarc}.  
   For (\ref{eq:decomp}), it is enough to show that every complete collection $\mathcal{X}$ of $\mathbf{G}$-arcs can be written as $\mathcal{X}^a\underset{e_P}{\times}\mathcal{X}^b$. 

   Let $v\in \{a,b\}$. For a $\mathbf{G}$-arc $\gamma$, we define a curve $\gamma|_{\mathbf{G}_v}$ as follows: If $g_d(\gamma)\neq 0$ for some $d\in G_1^v$, let $w_{\gamma}:=(w=(e_1,\ldots,e_k),\epsilon)$ be the signed walk corresponding to $\gamma$. We have the maximum subsequence $e_i,e_{i+1}, \ldots, e_{i+j}$ of $w$ consisting of edges of $\mathbf{G}^v$, and it gives rise to the signed walk on $\mathbf{G}^v$ by a signature $\epsilon$. 
   Then, let $\gamma|_{\mathbf{G}_v}$ be the arc corresponding to this signed walk. Namely, it is characterized by its $g$-vector 
   \[
      g_d(\gamma|_{\mathbf{G}^v}) = 
      \begin{cases} 
         g_d(\gamma) & \text{if $d\in G_1^v$} \\
         0 & \text{else.} 
      \end{cases}
   \]

   For a complete collection $\mathcal{X}$ of $\mathbf{G}$-arcs, it is easy to see that the collection
   \[
      \{\gamma|_{\mathbf{G}_v} \mid \text{$\gamma\in \mathcal{X}$, $g_d(\gamma)\neq0$ for some $d\in G^v_1$}\}
   \]
   forms an admissible collection of $\mathbf{G}^v$-arcs. Furthermore, it clearly provides a complete collection $\mathcal{X}|_{\mathbf{G}_v}$ on $\mathbf{G}^v$ from its maximality. 
   Under this notation, one can find a lattice path $P\in \mathrm{P}(t_a,t_b)$ such that    
   \[
      \mathcal{X}= (\mathcal{X}|_{\mathbf{G}_v})\underset{e_P}{\times}(\mathcal{X}|_{\mathbf{G}_v}),
   \]
   where $t_v:=g_e(\mathcal{X}|_{\mathbf{G}_v})$ for $v\in \{a,b\}$. It finishes a proof.
\end{proof}

\section{Proof of a main theorem} \label{sec:proof}

In this section, we prove the following main result in this paper. 

\begin{theorem} \label{thm:main}
Let $\mathbf{G}$ be a plane tree and $n:=|\mathbf{G}|$ the number of edges of $\mathbf{G}$. 
\begin{enumerate}
   \item For any external edge $e$ of $\mathbf{G}$ and non-negative integer $j$, we have 
   \begin{equation} \label{eq:formula}
      \#\mathcal{A}(\mathbf{G})_e^j = \#\mathcal{A}(\mathbf{G})_e^{-j} = 
      \begin{cases}
         \binom{2n-j+1}{n-1} & \text{if $j\in [1,n]$,} \\ 
         0 & \text{otherwise.}
      \end{cases}
   \end{equation}
   \item The following equation holds: 
   \begin{equation}
      \#\mathcal{A}(\mathbf{G}) = \binom{2n}{n}.
   \end{equation}
\end{enumerate} 
\end{theorem}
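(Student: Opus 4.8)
I would prove part (1) for $j>0$; the case $j<0$ then follows by applying it to the opposite plane tree $\mathbf{G}^{\rm op}$, in which $e$ is still external, via Proposition \ref{prop:opp}. Part (2) is then formal: granting (1), Propositions \ref{prop:zero}, \ref{prop:greater} and \ref{prop:partialarc} show that the nonempty sets $\mathcal{A}(\mathbf{G})_e^j$, $j\in[-n,n]\setminus\{0\}$, partition $\mathcal{A}(\mathbf{G})$, so
\[
   \#\mathcal{A}(\mathbf{G})\;=\;2\sum_{j=1}^{n}\binom{2n-j-1}{n-1}\;=\;2\binom{2n-1}{n-1}\;=\;\binom{2n}{n},
\]
the middle step being a hockey-stick summation.

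For (1) with $j>0$ the plan is induction on $n$, the case $n=1$ being immediate ($\mathcal{A}(\mathbf{G})$ then consists of two one-arc collections, with $g_e=\pm1$). For the inductive step write $e=\{a,b\}$ with $a$ of valence one and $n\geq2$. Since $e$ is a leaf edge, Theorem \ref{thm:gluing} applied to $e$ itself is vacuous, so one instead cuts $\mathbf{G}$ along a non-leaf edge $f$ incident to $b$. If there is no such $f$, then $\mathbf{G}$ is a star with centre $b$, a case to be treated directly (the Brauer star): one identifies complete collections of $\mathbf{G}$-arcs with suitable dissections of the once-punctured polygon $\triangle_b$, on which $g_e$ becomes a ballot-type statistic, so a reflection-principle count yields $\#\mathcal{A}(\mathbf{G})_e^j=\binom{2n-j-1}{n-1}$. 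Otherwise, cutting along $f$ yields subtrees $\mathbf{G}'$ and $\mathbf{H}$ with $G'_1\cap H_1=\{f\}$ and $G'_1\cup H_1=G_1$, where $\mathbf{H}$ is the branch hanging off $b$ (so $|\mathbf{H}|\geq2$) and $\mathbf{G}'$ is $\mathbf{G}$ with that branch contracted to the leaf $f$; both are strictly smaller than $\mathbf{G}$, the edge $e$ stays external in $\mathbf{G}'$, and $f$ is external in each.

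Now one applies Theorem \ref{thm:gluing} for the edge $f$ and tracks the $e$-th coordinate through the gluing via (\ref{eq:gglue}). The key observation is that $e$ is a leaf of $\mathbf{G}'$ at $b$: among the $\mathbf{G}'$-arcs with $g_f=\pm1$, exactly one has $g_e\neq0$ — the arc of the length-two walk $(f,e)$, with $g_e=\mp1$ — while the $\mathbf{G}'$-arcs with $g_f=0$ are exactly the $(\mathbf{G}'\setminus f)$-arcs. Hence the $e$-coordinate of a glued collection $\mathcal{X}'\underset{f_P}{\times}\mathcal{X}''$ equals the $e$-coordinate of the $g_f=0$ part of $\mathcal{X}'$, corrected (when that distinguished arc occurs) by $\pm P(j_0,-)$, where $j_0$ is its slot along $\alpha_f$, a slot determined by the cyclic order of edges at $b$. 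Summing over both families $\rho_f^{s,t}$ and $\rho_f^{-s,-t}$ in the decomposition (\ref{eq:decomp}) — both are needed, since for $j>0$ one gets contributions from collections with $g_f>0$ and with $g_f<0$ — then expresses $\#\mathcal{A}(\mathbf{G})_e^j$ as a finite sum of terms $\#\mathcal{A}(\mathbf{H})_f^{t}\cdot(\text{refined count for }\mathbf{G}')\cdot\binom{s+t-2}{s-1}$.

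Two points carry the weight. First, the induction must be run with a statement stronger than (1): one needs, for $\mathbf{G}'$, the joint distribution over $\mathcal{A}(\mathbf{G}')$ of $g_f$, of $g_e$ on the $g_f=0$ part, and of the occurrence and slot of the distinguished arc $(f,e)$ — so the inductive hypothesis must be strengthened and shown to be stable under the cut. Second, and this is the main obstacle, substituting the inductive formulas leaves a Chu--Vandermonde-type convolution: a weighted sum of products $\binom{2n'-s-1}{n'-1}\binom{2n''-t-1}{n''-1}$ with $n'+n''=n+1$, the weights being the lattice-path numbers $\binom{s+t-2}{s-1}$, which must be shown to collapse to $\binom{2n-j-1}{n-1}$. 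Verifying this identity, together with the bookkeeping of the distinguished arc that precedes it, is the crux; the star base case, although elementary, also needs its own reflection argument.
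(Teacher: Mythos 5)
The skeleton of your plan — induction on $n$, a cut at an edge incident to $b$, the gluing decomposition of Theorem \ref{thm:gluing}, and a terminal binomial identity — matches the paper, and some steps are exactly right: deriving $j<0$ from $j>0$ via $\mathbf{G}^{\rm op}$ and Proposition \ref{prop:opp}, and deducing (2) from (1) by a hockey-stick summation.

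Where the plan diverges is in the choice of cut edge, and the gap there is real. You propose cutting along an arbitrary \emph{non-leaf} edge $f$ at $b$, treating the star $\mathbf{G}$ as a separate base case by a reflection argument. The paper instead always takes $f:=\sigma_b(e)$, the edge immediately following $e$ in the cyclic order at $b$, whether or not $f$ is a leaf. That choice carries the whole argument. Because $\alpha_e$ and $\alpha_f$ are then \emph{adjacent} sides of $\triangle_b$, the arc $\gamma_0$ of the walk $(e,f)$ with $(g_e,g_f)=(1,-1)$ is compatible with every arc satisfying $g_e\geq0,\ g_f\leq0$, hence is forced, by maximality, into every complete collection on $\mathbf{G}^c$ with $g_e>0,\ g_f<0$ (Lemma \ref{lem:gamma0}(2)); moreover its intersection with $\alpha_f$ always occupies the first slot, so the quantity you call $j_0$ is constant rather than a variable statistic to be tracked. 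For a generic non-leaf $f$ at $b$, none of this holds: $\gamma_0$ is neither forced nor in a fixed slot, and you would face exactly the bookkeeping problem you flag. Second — and this is the obstacle you correctly name but do not resolve — the paper does \emph{not} strengthen the inductive hypothesis to a joint $(g_e,g_f)$ distribution. To obtain the refined counts $\#\mathcal{A}(\mathbf{G}^c)_{e,f}^{t_1,t_2}$ it flips $\mathbf{G}^c$ at $e$ (Proposition \ref{prop:flip}) to form $\mathbf{H}=\mu_e(\mathbf{G}^c)$, in which $e$ and $f$ now meet at a valence-two vertex, and then applies Theorem \ref{thm:gluing} to $\mathbf{H}$ along $f$: one piece is the fixed two-edge tree $\{e,f\}$, whose collections are completely listed in Example \ref{ex:collections}(1), and the other piece has fewer than $n$ edges, so the unrefined induction hypothesis applies. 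This cut--flip--cut reduction is the missing idea; it removes the need for a strengthened hypothesis (whose stability you have not argued) and makes the separate star base case unnecessary once $f=\sigma_b(e)$ is used, since the star simply gives $n_b=1$ in the general formulas. Your instinct that the remaining binomial identity is the crux is correct — it is Lemma \ref{lem:2}, proved by a nested induction — but that step only becomes tractable once the decomposition in Lemma \ref{lem:1}, equations (\ref{eq:AA})--(\ref{eq:CC}), is in hand.
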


\subsection{Proof of Theorem \ref{thm:main}}
The claim (2) of Theorem \ref{thm:main} is obvious from (1). 
In the following, we show (1) by induction on the number of edges.  

Let $\mathbf{G}=(G_0,G_1,s,\sigma)$ be a plane tree having $n=|\mathbf{G}|$ edges. 
Let $\mathbf{L}$ be the dual of $\mathbf{G}$, embedded into the sphere $\mathbb{S}$. 

If $n=1$, then the assertion is clear. So, we assume $n>1$. We prepare the following induction hypothesis: 
\begin{center}
($\ast$) The equation (\ref{eq:formula}) holds for any plane tree having less than $n$ edges.
\end{center}
Fix an external edge $e$ of $\mathbf{G}$. Suppose that $e$ has $a,b$ as its endpoints with external vertex $a$. 
Let $f:=\sigma_b(e)$ and $c$ its another endpoint.  
The edge $f$ determines two plane subtrees $\mathbf{G}^b, \mathbf{G}^c$ of $\mathbf{G}$ satisfying $G_1^b\cap G_1^c = \{f\}$ and $G_1^b\cup G_1^c = G_1$. Without loss of generality, for $v\in \{b,c\}$, we may assume that $\mathbf{G}^v$ has the edge $f$ as its external edge with external vertex $v$. In this case, $\mathbf{G}^c$ has the edge $e$ as its external edge with external vertex $a$. 

\[
   \renewcommand{\arraystretch}{1}
   \setlength{\tabcolsep}{1mm}
   \begin{tabular}{ccccccccc}
      \small$\mathbf{G}$ & &\small$\mathbf{G}^b$ & \small$\mathbf{G}^c$ \\
      \begin{tikzpicture}
         \node(b) at (0.6,0) {\tiny$b$};
         \node(c) at (-0.6,0) {\tiny$c$};
         \node(a) at (0.6,1) {\tiny$a$};
         \draw (b)--node[fill=white,inner sep=1]{\footnotesize$f$}(c); 
         \draw (b)--($(b)+(45:0.8)$) (b)--($(b)+(0:0.8)$) (b)--($(b)+(-90:0.8)$);
         \draw (b)--node[fill=white,inner sep=1]{\footnotesize$e$}(a);
         \draw (c)--($(c)+(135:0.8)$) (c)--($(c)+(180:0.8)$) (c)--($(c)+(-120:0.8)$);

         \draw (b)circle(1mm);
         \draw (c)circle(1mm);
         \draw (a)circle(1mm);
   
         \node(xx) at (0,-0.8) {};
         \node(xx) at (0,0.8) {};
         
      \end{tikzpicture}
      &
   \begin{tikzpicture}
      \node(sq) at (0,0) {$\rightsquigarrow$};
      \node(xx) at (0,-0.8) {};
         \node(xx) at (0,0.8) {};
   \end{tikzpicture}
      &
      \begin{tikzpicture}
         \node(b) at (0.6,0) {\tiny$b$};
         \node(c) at (-0.6,0) {\tiny$c$};
         \draw (b)--node[fill=white,inner sep=1]{\footnotesize$f$}(c);
         \draw (c)--($(c)+(135:0.8)$) (c)--($(c)+(180:0.8)$) (c)--($(c)+(-120:0.8)$);

         \draw (b)circle(1mm);
         \draw (c)circle(1mm);
   
         \node(xx) at (0,-0.8) {};
         \node(xx) at (0,0.8) {};
         
      \end{tikzpicture}
      & \quad
      \begin{tikzpicture}
         \coordinate(bl) at (-0.2,0.6);
         \node(a) at (0.6,0) {\tiny$b$};
         \node(b) at (-0.6,0) {\tiny$c$};
         \draw (a)--node[fill=white,inner sep=1]{\footnotesize$f$}(b); 
         \draw (a)--($(a)+(45:0.8)$) (a)--($(a)+(0:0.8)$) (a)--($(a)+(-90:0.8)$);
         \node(aa) at (0.6,1) {\tiny$a$};
         \draw (a)--node[fill=white,inner sep=1]{\footnotesize$e$}(aa); 
         \draw (a)circle(1mm);
         \draw (b)circle(1mm);
         \draw (aa)circle(1mm);
         \node(xx) at (0,-0.8) {};
         \node(xx) at (0,0.8) {};
      
      \end{tikzpicture}
   \end{tabular}
   \]

We begin with the following observation.
Now, let $\mathbf{L}^v$ be the dual of $\mathbf{G}^v$, embedded into the sphere $\mathbb{S}^v$. Let $n_v:=|\mathbf{G}^v|$ for $v\in \{b,c\}$.

\begin{lemma} \label{lem:gamma0}
   Suppose that $\gamma_0$ is the $\mathbf{G}^c$-arc such that $g_e(\gamma_0)=-g_f(\gamma_0)=1$ and $g_d(\gamma_0)=0$ for all $d \neq e,f$. The following hold. 
   \begin{enumerate}
      \item There are no $\mathbf{G}^c$-arcs $\gamma\neq \gamma_0$ such that $g_e(\gamma)>0$ and $g_f(\gamma)\neq 0$.
      \item If $\mathcal{X}$ is a complete collection of $\mathbf{G}^c$-arcs satisfying $g_e(\mathcal{X})>0$ and $g_f(\mathcal{X})<0$, then $\gamma_0\in \mathcal{X}$. 
   \end{enumerate} 
\end{lemma}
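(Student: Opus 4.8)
The plan is to push everything through the bijection of Proposition~\ref{prop:sw} between $\mathbf{G}^{c}$-arcs and signed walks on the tree $\mathbf{G}^{c}$, so that an arc becomes a path between two vertices together with an alternating $\pm$-labelling recording its $g$-vector, and then to argue locally at the vertex $b$. I will use throughout that in $\mathbf{G}^{c}$ the edge $e$ is a leaf edge at $a$ and $f$ a leaf edge at $c$, while both are incident to $b$ with $f=\sigma_{b}(e)$; consequently, in any walk on $\mathbf{G}^{c}$ the vertex $a$ (resp.\ $c$) can occur only as an endpoint, so $e$ (resp.\ $f$) can occur only as the first or last edge.

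\emph{Part (1).} Suppose $g_{e}(\gamma)>0$; then $g_{e}(\gamma)=1$ by Lemma~\ref{lem:abs} and $e$ occurs in the walk $w_{\gamma}=(e_{1},\dots,e_{k})$ of $\gamma$, so, reading $\gamma$ so that $e=e_{1}$, its vertex sequence begins $a,b,\dots$ with $\epsilon(e_{1})=1$. If moreover $g_{f}(\gamma)\neq 0$ then $f$ occurs in $w_{\gamma}$, hence $f=e_{k}$ and the vertex sequence ends with the pair $\{b,c\}$. Since $\mathbf{G}^{c}$ is a tree it has no cycle, so its vertex sequence $v_{1},\dots,v_{k+1}$ has no repeated vertex; as $v_{2}=b$ this forces $k=2$, so $w_{\gamma}=(e,f)$ with vertex sequence $(a,b,c)$ and signs $(+1,-1)$, i.e.\ $\gamma=\gamma_{0}$ (uniqueness by Proposition~\ref{prop:sw}, or Lemma~\ref{lem:g-invarc}). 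The same argument, with either choice of sign and with $e,f$ possibly interchanged, shows that the only $\mathbf{G}^{c}$-arcs with $g_{e}\neq 0$ and $g_{f}\neq 0$ are $\gamma_{0}$ and the arc $\gamma_{0}'$ with $g_{e}(\gamma_{0}')=-g_{f}(\gamma_{0}')=-1$.

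\emph{Part (2).} Assume $\gamma_{0}\notin\mathcal{X}$. Two arcs of a reduced collection that spiral around a common puncture must spiral in the same sense, since opposite spirals at a puncture necessarily meet inside that puncture's polygon. Combined with $g_{e}(\mathcal{X})>0$ this forces every arc of $\mathcal{X}$ meeting $\alpha_{e}$ to spiral counterclockwise at $a$, i.e.\ to have $g_{e}=1$; in particular $\gamma_{0}'\notin\mathcal{X}$, so by the last sentence of Part (1) no arc of $\mathcal{X}$ has both $g_{e}\neq 0$ and $g_{f}\neq 0$. Hence $g_{e}(\mathcal{X})$ equals the number of arcs of $\mathcal{X}$ with $g_{e}=1$ (and $g_{f}=0$) and $-g_{f}(\mathcal{X})$ the number with $g_{f}=-1$ (and $g_{e}=0$); as $g_{e}(\mathcal{X})>0>g_{f}(\mathcal{X})$, $\mathcal{X}$ contains an arc $\gamma^{+}$ with $(g_{e},g_{f})(\gamma^{+})=(1,0)$ and an arc $\gamma^{-}$ with $(g_{e},g_{f})(\gamma^{-})=(0,-1)$. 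I will then show $\gamma^{+}$ and $\gamma^{-}$ must intersect, contradicting that $\mathcal{X}$ is reduced and forcing $\gamma_{0}\in\mathcal{X}$.

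The step I expect to be the main obstacle is exactly this last claim. Both walks pass through $b$: $\gamma^{+}$ enters the polygon $\triangle_{b}$ through $\alpha_{e}$ with $b$ on its left (a positive crossing, since $g_{e}(\gamma^{+})=1$), and, since $g_{f}(\gamma^{+})=0$ and the labels alternate, $\gamma^{+}$ then leaves $\triangle_{b}$ through a side $\alpha_{h}$ with $h$ incident to $b$, $h\notin\{e,f\}$, again with $b$ on its left, or else ends by spiralling at $b$; symmetrically $\gamma^{-}$ enters $\triangle_{b}$ through the cyclically adjacent side $\alpha_{f}$ with $b$ on its right and behaves accordingly. One checks that the chord of $\gamma^{+}$ in $\triangle_{b}$ then separates $b$ from the side $\alpha_{f}$ and from both endpoints of $\gamma^{-}$'s chord; since $\gamma^{-}$ enters through $\alpha_{f}$ and must keep $b$ on a fixed side of itself, $\gamma^{-}$ cannot stay within the $b$-free region cut off by $\gamma^{+}$, so it crosses $\gamma^{+}$ (and if instead $\gamma^{+}$ ends by spiralling at $b$, the spiral meets the part of $\gamma^{-}$ passing through $\triangle_{b}$). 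Making this precise needs a short case analysis according to whether $\gamma^{+}$ and $\gamma^{-}$ leave $\triangle_{b}$ through distinct sides, the same side, or by spiralling at $b$; the only delicate point throughout is bookkeeping of the isotopy classes of chords in the once-punctured polygon $\triangle_{b}$ together with their orientations, everything else being routine manipulation of $g$-vectors.
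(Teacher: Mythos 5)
Your Part (1) essentially reproduces the paper's argument: both use the bijection of Proposition~\ref{prop:sw} to reduce to signed walks and note that $e$, $f$ are leaves of $\mathbf{G}^c$, so the unique walk containing both is $(e,f)$. Part (2) is where you diverge. The paper's proof observes that since $e$, $f$ are external in $\mathbf{G}^c$ and $g_e(\mathcal{X})>0$, $g_f(\mathcal{X})<0$, every arc of $\mathcal{X}$ has $g_e\geq 0$ and $g_f\leq 0$ (same spiral direction at the external vertices), and then asserts a single non-intersection fact — $\gamma_0$ meets no arc with $g_e\geq 0,\ g_f\leq 0$, because $f=\sigma_b(e)$ makes $\gamma_0$'s chord hug the corner of $\triangle_b$ shared by $\alpha_e$ and $\alpha_f$ — so $\gamma_0\in\mathcal{X}$ by the maximality in Proposition~\ref{prop:partialarc}. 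You instead argue by contradiction: assuming $\gamma_0\notin\mathcal{X}$, you locate arcs $\gamma^+$ and $\gamma^-$ with $(g_e,g_f)=(1,0)$ and $(0,-1)$ respectively and try to show they must cross. This route works (I checked the generic distinct-sides case and the spiralling-at-$b$ cases), but it genuinely is more laborious, and your intermediate assertion that $\gamma^+$'s chord ``separates $b$ from the side $\alpha_f$ and from both endpoints of $\gamma^-$'s chord'' fails when $\gamma^-$'s exit side happens to lie on the $b$-side of $\gamma^+$'s chord; there the crossing occurs for a different reason (the two endpoints of $\gamma^-$ fall on opposite sides of $\gamma^+$'s chord). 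The orientation bookkeeping in each subcase is genuinely delicate — it is easy to get a left/right reversed — and you acknowledge leaving this unfinished. What the paper's approach buys is exactly the avoidance of that case split: one has only to position $\gamma_0$'s chord (always the short corner chord, since $\alpha_e$ and $\alpha_f$ are cyclically adjacent) and check it is disjoint from arcs entering $\triangle_b$ with $b$ on the prescribed sides, after which the maximality of a complete collection finishes the argument without ever having to produce or analyze $\gamma^+$ and $\gamma^-$.
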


\begin{proof}
   There is a walk $w:=(e,f)$ on $\mathbf{G}^c$ and this is only the walk having both $e$ and $f$. The walk $w$ gives rise to precisely two arcs $\gamma_0$ and $\delta_0$ with $g(\gamma_0)=-g(\delta_0)$ by Proposition \ref{prop:sw}. Thus, we have (1). 
   On the other hand, since $f=\sigma_b(e)$, the arc $\gamma_0$ does not intersect with any arc $\gamma$ whose $g$-vector is such that $g_e(\gamma)\geq 0$ and $g_f(\gamma)\leq0$. This means that every complete collection $\mathcal{X}$ in (2) contains $\gamma_0$ from its maximality. 
\end{proof}

Now, for integers $t_1,t_2\in \mathbb{Z}$, let 
\[
   \mathcal{A}(\mathbf{G}^c)_{e,f}^{t_1,t_2}:=\mathcal{A}(\mathbf{G}^c)_{e}^{t_1}\cap \mathcal{A}(\mathbf{G}^c)_{f}^{t_2}. 
\]
Similarly, let 
\[
   \mathcal{A}(\mathbf{G}^c)_{e,f}^{>0,>0}:=\mathcal{A}(\mathbf{G}^c)_{e}^{>0}\cap \mathcal{A}(\mathbf{G}^c)_{f}^{>0}   \quad \text{and} \quad
   \mathcal{A}(\mathbf{G}^c)_{e,f}^{>0,<0}:=\mathcal{A}(\mathbf{G}^c)_{e}^{>0}\cap \mathcal{A}(\mathbf{G}^c)_{f}^{<0}. 
\]

\begin{lemma} \label{lem:Gc}
The set $\mathcal{A}(\mathbf{G}^c)_e^{>0}$ is a disjoint union of subsets of the following forms:  
\begin{enumerate}
   \item[(a)] $\mathcal{A}(\mathbf{G}^c)_{e,f}^{t+1,-1}$; 
   \item[(b)] $\mathcal{A}(\mathbf{G}^c)_{e,f}^{1,-t-1}$;   
   \item[(c)] $\mathcal{A}(\mathbf{G}^c)_{e,f}^{t-u+1,u}$. 
\end{enumerate}
Here, $t,u$ run over all integers $t,u\in [1,n_c-1]$ with $u\leq t$.
\end{lemma}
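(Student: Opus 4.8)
The plan is to determine, for each $\mathcal{X}\in\mathcal{A}(\mathbf{G}^c)_e^{>0}$, the pair $(g_e(\mathcal{X}),g_f(\mathcal{X}))$ and to match it with one of the three families. I would begin with structural observations valid for every such $\mathcal{X}$. Since $e$ (resp.\ $f$) is an external edge of $\mathbf{G}^c$ with external vertex $a$ (resp.\ $c$), Lemma~\ref{lem:external} gives $g_e(\gamma),g_f(\gamma)\in\{-1,0,1\}$ for each arc $\gamma$; moreover, since two arcs spiralling in opposite directions around a vertex whose polygon is a monogon must intersect, all arcs of $\mathcal{X}$ with $g_e(\gamma)\neq0$ carry the same sign of $g_e$, which is $+$ because $g_e(\mathcal{X})>0$, and similarly for $g_f$. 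Hence no arc of $\mathcal{X}$ has $g_e=-1$, so $p:=g_e(\mathcal{X})=\#\{\gamma\in\mathcal{X}\mid g_e(\gamma)=1\}\geq1$, while $g_f(\mathcal{X})\neq0$ by Proposition~\ref{prop:greater}. Finally, Lemma~\ref{lem:gamma0}(1) says $\gamma_0$ is the only arc with $g_e=1$ and $g_f\neq0$.

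In the case $g_f(\mathcal{X})>0$, every arc of $\mathcal{X}$ has $g_e,g_f\in\{0,1\}$, and since $g_f(\gamma_0)=-1$ no arc has $g_e=g_f=1$; therefore the $p$ arcs with $g_e=1$ and the $q:=g_f(\mathcal{X})$ arcs with $g_f=1$ are disjoint subfamilies, so $p+q\leq\#\mathcal{X}=n_c$. Putting $u:=q$ and $t:=p+q-1$ gives $1\leq u\leq t\leq n_c-1$ and $\mathcal{X}\in\mathcal{A}(\mathbf{G}^c)_{e,f}^{t-u+1,u}$, which is case (c).

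In the case $g_f(\mathcal{X})<0$, Lemma~\ref{lem:gamma0}(2) gives $\gamma_0\in\mathcal{X}$, and every arc has $g_e\in\{0,1\}$, $g_f\in\{0,-1\}$; using Lemma~\ref{lem:gamma0}(1) to place $\gamma_0$ we obtain a partition $\mathcal{X}=\{\gamma_0\}\sqcup A\sqcup B\sqcup C$ with $A=\{\gamma\in\mathcal{X}\mid g_e(\gamma)=1,\,g_f(\gamma)=0\}$, $B=\{\gamma\in\mathcal{X}\mid g_e(\gamma)=0,\,g_f(\gamma)=-1\}$ and $C=\{\gamma\in\mathcal{X}\mid g_e(\gamma)=g_f(\gamma)=0\}$, so that $\#A=p-1$ and $\#B=-g_f(\mathcal{X})-1$. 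Two facts then finish the case. First, $A$ and $B$ cannot both be empty: otherwise $\mathcal{X}=\{\gamma_0\}\sqcup C$, and by Proposition~\ref{prop:sw} the arcs in $C$ are exactly the arcs of the plane tree $\mathbf{K}$ obtained from $\mathbf{G}^c$ by deleting the leaf edges $e$ and $f$; then $\#C\leq|\mathbf{K}|=n_c-2$ by Proposition~\ref{prop:partialarc}, so $\#\mathcal{X}\leq n_c-1<n_c$, contradicting completeness. Second, $A$ and $B$ cannot both be non-empty: given $\gamma_A\in A$ and $\gamma_B\in B$, their signed walks have the form $(e,e',\dots)$ with $e'\neq f$ and $(\dots,e'',f)$ with $e''\neq e$, and since $f=\sigma_b(e)$ the chord of $\gamma_0$ occupies the corner of $\triangle_b$ between $\alpha_e$ and $\alpha_f$; tracking how $\gamma_A$ and $\gamma_B$ must sit along $\alpha_e$ and $\alpha_f$ relative to $\gamma_0$ then forces $\gamma_A$ and $\gamma_B$ to cross, contradicting admissibility. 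Combining: either $A\neq\emptyset$, hence $B=\emptyset$, $g_f(\mathcal{X})=-1$, $p=1+\#A\geq2$, so $\mathcal{X}\in\mathcal{A}(\mathbf{G}^c)_{e,f}^{t+1,-1}$ with $t=p-1\in[1,n_c-1]$ (case (a)); or $A=\emptyset$, hence $B\neq\emptyset$, $p=1$, $g_f(\mathcal{X})=-1-\#B\leq-2$, so $\mathcal{X}\in\mathcal{A}(\mathbf{G}^c)_{e,f}^{1,-t-1}$ with $t=\#B\in[1,n_c-1]$ (case (b)).

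It then remains to check disjointness: each constituent set $\mathcal{A}(\mathbf{G}^c)_{e,f}^{\ast,\ast}$ is determined by the pair $(g_e,g_f)$, and across the three families these pairs are distinct ($g_f=-1$ with $g_e\geq2$ in (a), $g_e=1$ with $g_f\leq-2$ in (b), $g_f\geq1$ in (c)), while within each family the parameters are recovered from the pair. The main obstacle is the second fact in the case $g_f(\mathcal{X})<0$, that a $(1,0)$-type arc and a $(0,-1)$-type arc cannot coexist in a reduced collection: the planarity bookkeeping around $b$ is delicate, and it may be cleaner to carry it out through the admissibility condition for signed walks of \cite{AAC} rather than directly on the surface.
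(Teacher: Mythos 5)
Your proposal is correct and follows essentially the same route as the paper: split by the sign of $g_f(\mathcal{X})$ (using that $g_f(\mathcal{X})\neq 0$), invoke $\gamma_0$ together with Lemma~\ref{lem:gamma0} in the negative case, and read off the parameters $(t,u)$ from $(g_e,g_f)$ in the positive case. You supply a couple of details the paper leaves implicit---the bound $g_e(\mathcal{X})+g_f(\mathcal{X})\le n_c$ needed for case (c), and the exclusion of $(g_e,g_f)=(1,-1)$ via the count $\#\mathcal{X}\le 1+(n_c-2)$ using Propositions~\ref{prop:sw} and~\ref{prop:partialarc}---and the step you flag as delicate, that a $(1,0)$-type arc and a $(0,-1)$-type arc cannot coexist in a reduced collection containing $\gamma_0$, is precisely the assertion the paper also makes without a spelled-out argument, so you have correctly isolated the one nontrivial geometric ingredient.
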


\begin{proof}
   Trivially, we have 
   \[
      \mathcal{A}(\mathbf{G}^c)_e^{>0} = \mathcal{A}(\mathbf{G}^c)_{e,f}^{>0,<0} \sqcup \mathcal{A}(\mathbf{G}^c)_{e,f}^{>0,>0}. 
   \]
   First, we show that $\mathcal{A}(\mathbf{G}^c)_{e,f}^{>0,<0}$ is decomposed in to subsets of the forms (a) and (b). 
   Let $\mathcal{X}$ be its element. By Lemma \ref{lem:gamma0}, it contains the arc $\gamma_0$ in Lemma \ref{lem:gamma0}. On the other hand, if $\mathcal{X}$ contains an arc $\gamma\neq \gamma_0$ such that $g_e(\gamma)=1$ (resp., $g_f(\gamma)=-1$), then all arcs $\delta$ in $\mathcal{X}\setminus\{\gamma_0\}$ must satisfy $g_f(\delta)=0$ (resp., $g_e(\delta)=0$). That is, $\mathcal{X}$ lies in the subset $\mathcal{A}(\mathbf{G}^c)_{e,f}^{t+1,-1}$ (resp., $\mathcal{A}(\mathbf{G}^c)_{e,f}^{1,-t-1}$) for some $t \in [1,n_c-1]$.
   Second, it is clear from the definition that $\mathbf{A}(\mathbf{G}^c)_{e,f}^{>0,>0}$ is decomposed into subsets of the form (c). 
   We finish a proof of Lemma \ref{lem:Gc}.
\end{proof}

\[
   \renewcommand{\arraystretch}{2}
   \setlength{\tabcolsep}{5mm}
   \begin{tabular}{ccccccccc}
      \small (a) & \small(b) & \small(c) \\
      \begin{tikzpicture}
         \coordinate(bl) at (-0.2,0.6);
         \node(a) at (0.6,0) {\tiny$b$};
         \node(b) at (-0.6,0) {\tiny$c$};
         \draw (a)--node[fill=white,inner sep=1]{\footnotesize$f$}(b); 
         \draw (a)--($(a)+(45:0.8)$) (a)--($(a)+(0:0.8)$) (a)--($(a)+(-90:0.8)$);
         \node(aa) at (0.6,1.1) {\tiny$a$};
         \draw (a)--node[fill=white,inner sep=1]{\footnotesize$e$}(aa); 
         \draw (a)circle(1mm);
         \draw (b)circle(1mm);
         \draw (aa)circle(1mm);
         \node(xx) at (0,-0.8) {};
         \node(xx) at (0,0.8) {};
         \draw[blue] ($(aa)+(-70:0.13)$)arc(-75:-270:0.15)arc(-270:-420:0.15)--($(c)+(310:0.2)$)arc(310:90:0.18)arc(90:-90:0.16);
         \draw[blue] ($(aa)+(-100:0.15)$)arc(-90:-270:0.19)arc(-270:-400:0.22)--(0.4,0.5);
         \draw[blue] (0.4,0.5)..controls(-0.2,-0.5)and(0.6,-0.3)..(0.8,-0.3);
         \draw[blue] (0.4,0.5)..controls(-0.4,-0.5)and(0.6,-0.6)..(0.8,-0.6);
         \draw[dotted] (0.8,-0.35)--(0.8,-0.6); 
         \node[blue] at (1,-0.4) {\footnotesize$t$};
      \end{tikzpicture}
      &
      \begin{tikzpicture}
         \coordinate(bl) at (-0.2,0.6);
         \node(a) at (0.6,0) {\tiny$b$};
         \node(b) at (-0.6,0) {\tiny$c$};
         \draw (a)--node[fill=white,inner sep=1]{\footnotesize$f$}(b); 
         \draw (a)--($(a)+(45:0.8)$) (a)--($(a)+(0:0.8)$) (a)--($(a)+(-90:0.8)$);
         \node(aa) at (0.6,1.1) {\tiny$a$};
         \draw (a)--node[fill=white,inner sep=1]{\footnotesize$e$}(aa); 
         \draw (a)circle(1mm);
         \draw (b)circle(1mm);
         \draw (aa)circle(1mm);
         \draw[blue] ($(aa)+(-70:0.13)$)arc(-75:-270:0.15)arc(-270:-420:0.15)--($(c)+(310:0.2)$)arc(310:90:0.18)arc(90:-90:0.16);
         \draw[blue] ($(c)+(-20:0.2)$)arc(-35:90:0.2)arc(90:300:0.25)--(0.15,0.2);
         \draw[blue] (0.15,0.2)..controls(0.8,0.5)and(1,0.2)..(1,-0.4);
         \draw[blue] (0.15,0.2)..controls(0.8,0.7)and(1.3,0.2)..(1.3,-0.4);
         \draw[dotted](1,-0.4)--(1.3,-0.4);
         \node[blue] at (1.15,-0.6) {\footnotesize$t$};

         \node(xx) at (0,-0.8) {};
         \node(xx) at (0,0.8) {};
      
      \end{tikzpicture}
      & 
      \begin{tikzpicture}
         \coordinate(bl) at (-0.2,0.6);
         \node(a) at (0.6,0) {\tiny$b$};
         \node(b) at (-0.6,0) {\tiny$c$};
         \draw (a)--node[fill=white,inner sep=1]{\footnotesize$f$}(b); 
         \draw (a)--($(a)+(45:0.8)$) (a)--($(a)+(0:0.8)$) (a)--($(a)+(-90:0.8)$);
         \node(aa) at (0.6,1.1) {\tiny$a$};
         \draw (a)--node[fill=white,inner sep=1]{\footnotesize$e$}(aa); 
         \draw (a)circle(1mm);
         \draw (b)circle(1mm);
         \draw (aa)circle(1mm);
         \node(xx) at (0,-0.8) {};
         \node(xx) at (0,0.8) {};
         \draw[blue] ($(aa)+(-100:0.15)$)arc(-90:-270:0.19)arc(-270:-400:0.22)--(0.4,0.5);
         \draw[blue] (0.4,0.5)..controls(-0.2,-0.5)and(0.6,-0.3)..(0.8,-0.1);
         \draw[blue] (0.4,0.5)..controls(-0.4,-0.5)and(0.6,-0.5)..(0.8,-0.3);
         \draw[dotted] (0.8,-0.1)--(0.8,-0.3); 
         \node[blue] at (1.4,-0.2) {\tiny$t-u+1$};

         \draw[blue] ($(c)+(90:0.15)$)arc(90:-90:0.17)arc(-90:-331:0.2)--(-0.2,-0.2);
         \draw[blue] (-0.2,-0.2)..controls(0,-0.5)and(0.2,-0.7)..(0.8,-0.45);
         \draw[blue] (-0.2,-0.2)..controls(0,-0.5)and(0.2,-0.85)..(0.8,-0.65);

         \draw[dotted](0.8,-0.45)--(0.8,-0.65);
         \node[blue] at (1,-0.55) {\tiny$u$};

      \end{tikzpicture}

   \end{tabular}
   \]

Our proof of (1) is achieved by the following two lemmas. 
The first one follows from the previous result. Now, we denote by $\mathrm{P}(s,t)|_j$ a subset of $\mathrm{P}(s,t)$ consisting of all lattice paths $P$ satisfying $P(-,1)=j$.

\begin{lemma} \label{lem:1} We have the following.
   \begin{enumerate}
   \item The set $\mathcal{A}(\mathbf{G})_e^{>0}$ is a disjoint union of the images of the following injective maps of three types: 
            \begin{equation} \label{eq:A}
               \rho_{f}^{-s,-1} \colon \mathcal{A}(\mathbf{G}^b)_{f}^{-s} \times \mathcal{A}(\mathbf{G}^c)_{e,f}^{t+1,-1} \times \mathrm{P}(s,1) \hookrightarrow \mathcal{A}(\mathbf{G})_e^{s+t},
            \end{equation} 
            where $s,t$ run over all integers $s\in [1,n_b]$, $t \in [1,n_c-1]$;
   
            \begin{equation} \label{eq:B}
               \rho_{f}^{-s,-t-1} \colon \mathcal{A}(\mathbf{G}^b)_{f}^{-s} \times \mathcal{A}(\mathbf{G}^c)_{e,f}^{1,-t-1} \times \mathrm{P}(s,t+1)|_{l} \hookrightarrow \mathcal{A}(\mathbf{G})_e^{l},
            \end{equation}
            where $s,t,l$ run over all integers $s\in [1,n_b]$, $t \in [1,n_c-1]$ and $l\in [1,t+1]$;
            \begin{equation} \label{eq:C}
               \rho_{f}^{s,u} \colon \mathcal{A}(\mathbf{G}^b)_{f}^{s} \times \mathcal{A}(\mathbf{G}^c)_{e,f}^{t-u+1,u} \times \mathrm{P}(s,u) \hookrightarrow \mathcal{A}(\mathbf{G})_e^{t-u+1}, 
            \end{equation}
            where $s,t,u$ run over all integers $s\in [1,n_b]$, $t,u\in [1,n_c-1]$ with $u\leq t$.
   \item For each $j\in [1,n]$, the set $\mathcal{A}(\mathbf{G})_e^j$ is a disjoint union of the following subsets: 
            \begin{eqnarray} 
            &\displaystyle\bigsqcup_{\substack{s\in [1,n_b] \\ t\in [1,n_c-1] \\ s+t=j}}& \rho_f^{-s,-1}(\mathcal{A}(\mathbf{G}^b)_{f}^{-s} \times \mathcal{A}(\mathbf{G}^c)_{e,f}^{t+1,-1} \times \mathrm{P}(s,1));  \label{eq:AA} \\ 
            &\displaystyle\bigsqcup_{\substack{s\in [1,n_b] \\ t\in [1,n_c-1]}}& \rho_f^{-s,-t-1} 
            (\mathcal{A}(\mathbf{G}^b)_{f}^{-s} \times \mathcal{A}(\mathbf{G}^c)_{e,f}^{1,-t-1} \times \mathrm{P}(s,t+1)|_j); \label{eq:BB}\\
            &\displaystyle\bigsqcup_{\substack{s\in [1,n_b] \\ t\in [1,n_c-1]\\ u\in [1,t]\\ t-u+1=j}}& \rho_f^{s,u}(\mathcal{A}(\mathbf{G}^b)_{f}^{s} \times \mathcal{A}(\mathbf{G}^c)_{e,f}^{t-u+1,u} \times \mathrm{P}(s,u)).  \label{eq:CC}
            \end{eqnarray}
   Under the induction hypothesis ($\ast$), the cardinality of (\ref{eq:AA}), (\ref{eq:BB}) and (\ref{eq:CC}) are given by
            \begin{eqnarray} 
               A_{n_b,n_c-1} &:=& \displaystyle\sum_{\substack{s\in [1,n_b] \\ t\in [1,n_c-1] \\ s+t=j}} \binom{2n_b-s-1}{n_b-1}\binom{2n_c-t-3}{n_c-2}, \label{eq:AAA}\\ 
               B_{n_b,n_c-1} &:=& \displaystyle\sum_{\substack{s\in [1,n_b] \\ t\in [1,n_c-1]}} \binom{2n_b-s-1}{n_b-1}\binom{2n_c-t-3}{n_c-2}\binom{s+t-j-1}{s-j}\quad \text{and} \label{eq:BBB}\\ 
               C_{n_b,n_c-1} &:=& \displaystyle\sum_{\substack{s\in [1,n_b] \\ t\in [1,n_b-1]\\ u\in [1,t]\\ t-u+1=j}} \binom{2n_b-s-1}{n_b-1}\binom{2n_c-t-3}{n_c-2}\binom{s+t-j-1}{t-j}. \label{eq:CCC}
            \end{eqnarray}
      respectively.
   \end{enumerate}
\end{lemma}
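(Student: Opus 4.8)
The plan is to combine the gluing decomposition of Theorem~\ref{thm:gluing} with the case analysis of Lemma~\ref{lem:Gc}, and then to translate each piece into a counting formula using the induction hypothesis ($\ast$). First I would prove part~(1). Fix $e$ external at $a$, set $f:=\sigma_b(e)$ with other endpoint $c$, and consider the two subtrees $\mathbf{G}^b,\mathbf{G}^c$ obtained by cutting at $f$, so that $e$ is external in $\mathbf{G}^c$ at $a$. Applying Theorem~\ref{thm:gluing} at the edge $f$, every complete collection $\mathcal X$ on $\mathbf{G}$ with $g_f(\mathcal X)\neq 0$ is uniquely $\mathcal X^b\underset{f_P}{\times}\mathcal X^c$ for a lattice path $P$ and complete collections $\mathcal X^b$ on $\mathbf{G}^b$, $\mathcal X^c$ on $\mathbf{G}^c$ with $|g_f(\mathcal X^b)|=|g_f(\mathcal X^c)|$. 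Since $e$ lives only in $\mathbf{G}^c$, the formula~(\ref{eq:gglue}) gives $g_e(\mathcal X)=\sum_k g_e(\gamma^c_k)P(-,k)$, so $g_e(\mathcal X)$ is controlled entirely by the $\mathbf{G}^c$-side data and the lattice path. Now I would stratify $\mathcal A(\mathbf{G}^c)_e^{>0}$ according to Lemma~\ref{lem:Gc} into the three families (a)~$\mathcal A(\mathbf{G}^c)_{e,f}^{t+1,-1}$, (b)~$\mathcal A(\mathbf{G}^c)_{e,f}^{1,-t-1}$, (c)~$\mathcal A(\mathbf{G}^c)_{e,f}^{t-u+1,u}$; feeding each into $\rho_f^{\pm s,\pm t}$ and reading off the resulting $g_e$-value from~(\ref{eq:gglue}) yields exactly the three injective maps~(\ref{eq:A}), (\ref{eq:B}), (\ref{eq:C}) of part~(1), with the indicated targets $\mathcal A(\mathbf{G})_e^{s+t}$, $\mathcal A(\mathbf{G})_e^{l}$, $\mathcal A(\mathbf{G})_e^{t-u+1}$. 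The disjointness and the fact that these images exhaust $\mathcal A(\mathbf{G})_e^{>0}$ is precisely the decomposition~(\ref{eq:decomp}) of Theorem~\ref{thm:gluing} refined by Lemma~\ref{lem:Gc}; one checks that when $g_e(\mathcal X)>0$ we automatically have $g_f(\mathcal X^c)$ small enough that $\mathcal X^c$ lies in one of (a)--(c), and the case $g_f(\mathcal X)=0$ is absorbed into family~(c) with $u=t$ (i.e.\ the $\mathbf{G}^c$-side collection contributes no crossing at $e$ beyond the $\gamma_0$-type arcs). This establishes part~(1).

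For part~(2), the content is the cardinality count. I would use the bijections supplied by part~(1) together with~($\ast$): the number of complete collections on $\mathbf{G}^b$ with $g_f = -s$ equals $\binom{2n_b-s-1}{n_b-1}$ by ($\ast$) applied to the smaller tree $\mathbf{G}^b$ (with its external edge $f$); the number of complete collections on $\mathbf{G}^c$ in family (a), (b) or (c) reduces, after flipping away the external edge $e$ via~(\ref{eq:fliparc}) of Proposition~\ref{prop:flip} (so that $\mathbf{G}^c$ is replaced by a plane tree in which $f$ becomes external with the same number $n_c-1$ of ``other'' edges), to the quantity $\binom{2n_c-t-3}{n_c-2}$ from~($\ast$); and the number of lattice paths in $\mathrm P(s,t)$, respectively $\mathrm P(s,t+1)|_l$, is the standard binomial $\binom{s+t-2}{s-1}$, respectively $\binom{s+t-l-1}{s-l}$ (lattice paths through a prescribed first column). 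Multiplying and summing over the index ranges gives precisely the three sums $A_{n_b,n_c-1}$, $B_{n_b,n_c-1}$, $C_{n_b,n_c-1}$ of~(\ref{eq:AAA}), (\ref{eq:BBB}), (\ref{eq:CCC}). Summing these three over all admissible configurations and invoking the symmetry $\#\mathcal A(\mathbf{G})_e^j=\#\mathcal A(\mathbf{G})_e^{-j}$ (Proposition~\ref{prop:opp}) would then, after a Vandermonde-type binomial identity, collapse to $\binom{2n-j-1}{n-1}$ for $j\in[1,n]$ and hence to $\sum_j 2\binom{2n-j-1}{n-1}=\binom{2n}{n}$; but this is exactly the combinatorial identity deferred to Lemma~\ref{lem:1} and the subsequent lemma, so at the present stage it suffices to record the three cardinality formulas.

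The main obstacle I expect is Lemma~\ref{lem:1}(1): verifying that the three families of maps are \emph{jointly} a disjoint decomposition of $\mathcal A(\mathbf{G})_e^{>0}$, i.e.\ that no complete collection is double-counted across families (a), (b), (c) and that the $g_e$-value of the output is correctly identified in each case. This requires carefully matching the case split of Lemma~\ref{lem:Gc} on the $\mathbf{G}^c$-side (which distinguishes $g_f>0$ from $g_f<0$ and, within $g_f<0$, the location of the unique crossing of $\gamma_0$) against the lattice-path bookkeeping of Proposition~\ref{prop:gluepath}; the subtle point is that the lattice path $P$ determines how many copies of each $\gamma^c_k$ appear, and one must check $P(-,1)$ (the multiplicity of the ``top'' arc on the $\mathbf{G}^b$-side, which is the one carrying $g_e$) is exactly the parameter $l$ appearing in family (b). Once this is set up, injectivity of each $\rho_f$ is immediate from Lemma~\ref{lem:g-invarc} ($g$-vectors determine admissible collections), and the passage to cardinalities via~($\ast$) and the flip bijection is routine.
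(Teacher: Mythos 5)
Your overall strategy—combine the gluing decomposition of Theorem~\ref{thm:gluing} at the edge $f$ with the stratification of $\mathcal{A}(\mathbf{G}^c)_e^{>0}$ from Lemma~\ref{lem:Gc}, compute $g_e$ of each glued collection via~(\ref{eq:gglue}), and invoke Lemma~\ref{lem:g-invarc} for injectivity—matches the paper's proof of part~(1). Your treatment of part~(2) for the factors $\#\mathcal{A}(\mathbf{G}^b)_f^{\pm s}$ and the lattice-path counts is also correct (and your closed form $\binom{s+t-l-1}{s-l}$ for $\#\mathrm{P}(s,t+1)|_l$ agrees with the paper's $\#\mathrm{P}(s-l+1,t)$). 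One small slip: you refer to "$P(-,1)$, the multiplicity of the `top' arc on the $\mathbf{G}^b$-side," but $e$ lives in $G_1^c$, so it is the first arc on the $\mathbf{G}^c$-side (namely $\gamma_0$) whose multiplicity under $P$ determines $g_e$. Also, the remark that "the case $g_f(\mathcal{X})=0$ is absorbed into family~(c)" is a non-issue: $g_f(\mathcal{X})\neq 0$ always, by Proposition~\ref{prop:zero}.

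The genuine gap is in the justification of
\[
   \#\mathcal{A}(\mathbf{G}^c)_{e,f}^{t+1,-1}=\#\mathcal{A}(\mathbf{G}^c)_{e,f}^{1,-t-1}=\#\mathcal{A}(\mathbf{G}^c)_{e,f}^{t-u+1,u}=\binom{2n_c-t-3}{n_c-2}.
\]
You claim this follows because flipping at $e$ "replaces $\mathbf{G}^c$ by a plane tree in which $f$ becomes external." That is not what the flip does. In $\mathbf{H}:=\mu_e(\mathbf{G}^c)$ the endpoint of $e$ at $b$ is moved to $c$; afterwards $c$ carries both $e$ and $f$, and $b$ retains $f$ together with all its other edges from $\mathbf{G}^c$, so $f$ is external at neither end. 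The mechanism that actually gives $\binom{2n_c-t-3}{n_c-2}$ is a second application of Theorem~\ref{thm:gluing}, now \emph{cutting $\mathbf{H}$ at $f$}: this produces the $2$-edge subtree $\mathbf{H}^b=\{e,f\}$ on one side and $\mathbf{H}^c$ with $n_c-1$ edges on the other. One must then identify, for each of the three families (a), (b), (c), the unique element of $\mathcal{A}(\mathbf{H}^b)$ (from Example~\ref{ex:collections}(1), these are the $A_1,A_2,A_3$ of the two-edge line) and the unique lattice path in $\mathrm{P}(1,t)$ or $\mathrm{P}(2,t)$ that appear in the decomposition, so that the family is in bijection with $\mathcal{A}(\mathbf{H}^c)_f^{\pm t}$ and the induction hypothesis applies to $\mathbf{H}^c$. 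That matching (the paper's equations~(\ref{eq:aa})--(\ref{eq:cc})) is the load-bearing step, and it is missing from your proposal; without it there is no route from "$\mathbf{G}^c$ in family (a)/(b)/(c)" to a count over a tree with $n_c-1$ edges.
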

The second one is an equation on binomial coefficients. 

\begin{lemma} \label{lem:2}
   Let $m=p+q-1$, where $p,q$ are integers with $p>0$ and $q>1$.
   For any $j\in [1,m]$, the following identity holds: 
   \[
      \binom{2m-j-1}{m-1} = A_{p,q-1}(j) + B_{p,q-1}(j) + C_{p,q-1}(j).
   \]
   where $A,B,C$ are in Lemma \ref{lem:1}(2). 
\end{lemma}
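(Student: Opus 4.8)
The plan is to prove the combinatorial identity of Lemma \ref{lem:2} by substituting the explicit formulas \eqref{eq:AAA}, \eqref{eq:BBB}, \eqref{eq:CCC} and verifying equality of the resulting sums of products of binomial coefficients. Concretely, writing $p=n_b$, $q-1=n_c-1$ and $m=p+q-1$, we must show
\[
\binom{2m-j-1}{m-1} \;=\; \sum_{\substack{s+t=j\\ 1\le s\le p,\ 1\le t\le q-1}}\!\binom{2p-s-1}{p-1}\binom{2q-t-3}{q-2}
\;+\;\sum_{\substack{1\le s\le p\\ 1\le t\le q-1}}\!\binom{2p-s-1}{p-1}\binom{2q-t-3}{q-2}\binom{s+t-j-1}{s-j}
\;+\;\sum_{\substack{t-u+1=j\\ 1\le u\le t\le q-1,\ 1\le s\le p}}\!\binom{2p-s-1}{p-1}\binom{2q-t-3}{q-2}\binom{s+t-j-1}{t-j}.
\]
First I would absorb the $s$-summation: the inner factor $\binom{2p-s-1}{p-1}$ that appears in all three terms suggests that after grouping by a fixed value of $t$ (or $t-u$), the remaining $s$-sum is a Vandermonde-type convolution. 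Indeed $\sum_{s}\binom{2p-s-1}{p-1}\binom{\cdots}{\cdots}$ collapses via the Vandermonde/Chu identity once the second factor is written in the form $\binom{s+c}{s-d}=\binom{s+c}{c+d}$; the key observation is that $\binom{2p-s-1}{p-1}=\binom{2p-s-1}{p-s}$ counts lattice paths, so summing against a shifted binomial in $s$ reproduces $\binom{2p+\text{(const)}}{p+\text{(const)}}$ with the constant depending only on $t$ and $j$.

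The key steps, in order: (1) For the $(B)$-sum, fix $t$ and evaluate $\sum_{s}\binom{2p-s-1}{p-1}\binom{s+t-j-1}{s-j}$; rewriting $\binom{s+t-j-1}{s-j}=\binom{s+t-j-1}{t-1}$ and substituting $s'=s-j$ turns this into $\sum_{s'}\binom{2p-j-s'-1}{p-1}\binom{s'+t-1}{t-1}$, which is a standard hockey-stick/Vandermonde convolution equal to $\binom{2p-j+t-1}{p+t-1}$ (valid since the upper summation bound $s\le p$ is automatically respected by vanishing of the binomials when $p$ is large enough, and one checks the edge cases separately). (2) Similarly for the $(C)$-sum, substituting $u=t-j+1$ and summing over $s$ gives, for each fixed $t$, a term $\binom{2p+t-j-1}{p+t-j}$ times $\binom{2q-t-3}{q-2}$; reindex by $t'=t-u=j-1$... actually by the free parameter which after the constraint $t-u+1=j$ becomes a single sum over $t$ from $j$ to $q-1$. (3) The $(A)$-sum, with $s+t=j$ forcing $s=j-t$, contributes $\sum_{t}\binom{2p-(j-t)-1}{p-1}\binom{2q-t-3}{q-2}$. (4) Combine (1)--(3): after the $s$-collapses, $A+B$ should merge into a single sum $\sum_{t=0}^{q-1}\binom{2p+t-j-1}{p-1}\binom{2q-t-3}{q-2}$ (the $t=0$ term and boundary terms being accounted by the degenerate cases in $(A)$), and then $(A+B)+C$ is one last Vandermonde convolution in $t$ producing $\binom{2(p+q-1)-j-1}{p+q-2}=\binom{2m-j-1}{m-1}$, as desired.

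The main obstacle I anticipate is bookkeeping the summation ranges and the degenerate boundary terms: the three sums $(A),(B),(C)$ have ranges $1\le s\le n_b$, $1\le t\le n_c-1$, $1\le u\le t$, and the constraints $s+t=j$, $t-u+1=j$, $P(-,1)=j$ interact with these in a way that the naive Vandermonde identities only hold after extending the ranges to $0\le\cdot$ or $\le\infty$ and checking that all the added terms either vanish or exactly cancel among $(A),(B),(C)$. In particular the term $\binom{s+t-j-1}{s-j}$ in $(B)$ vanishes unless $s\ge j$, and the term in $(A)$ corresponds morally to the $s=j$, $t=0$-ish boundary of the convolution in $(B)$; reconciling these requires care. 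A clean way to organize this is to introduce the ``extended'' sums $\widetilde A,\widetilde B,\widetilde C$ with $s,t$ ranging over all of $\mathbb{Z}_{\ge 0}$ (legitimate since the binomial coefficients vanish outside the stated ranges once $p=n_b\ge 1$ and $q-1=n_c-1\ge 1$), prove the identity for those by two successive applications of Vandermonde's convolution $\sum_k\binom{x}{m+k}\binom{y}{n-k}=\binom{x+y}{m+n}$, and then observe that $\widetilde A+\widetilde B+\widetilde C=A+B+C$. I would also separately handle the extreme cases $j=1$ and $j=m$ as sanity checks, since for $j=m$ the left side is $\binom{m-1}{m-1}=1$ and only one term on the right survives. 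I do not expect any conceptual difficulty beyond this combinatorial accounting; the identity is ultimately a double convolution dressed up by the lattice-path decomposition of Section \ref{sec:glue}.
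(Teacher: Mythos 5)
Your plan is a genuinely different route from the paper's. The paper proves the identity by induction on $q$, using the Pascal recurrence $F_{p+q}(j)=\sum_{i\ge j-1}F_{p+q-1}(i)$ and showing that $\sum_iA_{q-1}(i)$, $\sum_iB_{q-1}(i)$, $\sum_iC_{q-1}(i)$ recombine into $A_q(j)+B_q(j)+C_q(j)$, whereas you propose to evaluate $A+B+C$ directly by two successive Vandermonde convolutions. Your one-variable collapses in (1)--(3) are essentially correct: for fixed $t$, the inner $s$-sum of $B$ evaluates to $\binom{2p+t-j-1}{p+t-1}=\binom{2p+t-j-1}{p-j}$, while those of $A$ and $C$ evaluate to $\binom{2p+t-j-1}{p-1}$.

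Step (4), however, contains a genuine error. After the $s$-collapse, $A$ and $C$ carry the factor $\binom{2p+t-j-1}{p-1}$ but $B$ carries $\binom{2p+t-j-1}{p-j}$; the lower indices differ by $j-1$, so $A$ and $B$ do \emph{not} merge into a single $t$-sum with a common summand (except when $j=1$). Numerically: at $p=1$, $q=3$, $j=1$ one computes $A+B=0+3=3$ while your proposed $\sum_{t=0}^{q-1}\binom{2p+t-j-1}{p-1}\binom{2q-t-3}{q-2}=6$; at $p=2$, $q=2$, $j=1$ one gets $A+B=3$ while the proposed sum is $5$ and the target $\binom{2m-j-1}{m-1}$ is $6$. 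It is in fact $A$ and $C$ that share a summand (not $A$ and $B$), and even then $A+B+C$ is a $t$-sum of $\bigl[\binom{2p+t-j-1}{p-1}+\binom{2p+t-j-1}{p-j}\bigr]\binom{2q-t-3}{q-2}$, which is not a single Vandermonde convolution since the two lower indices do not telescope. The "extend to $\mathbb{Z}_{\ge 0}$" device is also delicate here: when $p-1=0$ the factor $\binom{2p+t-j-1}{p-1}$ does not vanish for negative $t$, so the would-be Vandermonde over all integers diverges and the range bookkeeping you flagged as the "main obstacle" is in fact where the argument breaks. The direct-convolution strategy is not obviously unsalvageable, but the final recombination needs a substantially different and more careful argument than what you sketched; the paper's induction on $q$ is one clean way to avoid it.
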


Using Lemmas \ref{lem:1} and \ref{lem:2}, we prove Theorem \ref{thm:main}(1). 
Proofs of Lemma \ref{lem:1} and \ref{lem:2} are given in Sections \ref{sec:lem:1} and \ref{sec:lem:2} respectively.

\begin{proof}[Proof of Theorem \ref{thm:main}(1)]
   Let $j$ be a non-negative integer. 
   For $j\notin [1,n]$, the set $\mathcal{A}(\mathbf{G})_e^j$ is empty by Proposition \ref{prop:greater}. 
   Suppose that $j\in [1,n]$. We denote by $\mathbf{G}^{\rm op}$ the opposite plane tree of $\mathbf{G}$ (see Section \ref{seq:opp}).
   Applying Lemmas \ref{lem:1} and \ref{lem:2} to both $\mathbf{G}$ and $\mathbf{G}^{\rm op}$, we get the identities 
   \[
      \#\mathcal{A}(\mathbf{G})_e^{j} = \binom{2n-j-1}{n-1} = \#\mathcal{A}(\mathbf{G}^{\rm op})_e^{j} .
   \]
   On the other hand, by Proposition \ref{prop:opp}, we have 
   \[
      \#\mathcal{A}(\mathbf{G})_e^{-j} = \#\mathcal{A}(\mathbf{G}^{\rm op})_e^{j}.
   \]
   Therefore, we get the desired equation (\ref{eq:formula}) for $\mathbf{G}$. 
\end{proof}

We finish a proof of Theorem \ref{thm:main}.

\subsection{Proof of Lemma \ref{lem:1}} \label{sec:lem:1}

In this subsection, we prove Lemma \ref{lem:1}.

\begin{proof}[Proof of Lemma \ref{lem:1}(1)]
   We compute the $g$-vector of the images of maps in (\ref{eq:AA})-(\ref{eq:CC}).
\begin{enumerate}
\item[(i)] Let
\[
   (\mathcal{X}^b,\mathcal{X}^c,P_0)\in \mathcal{A}(\mathbf{G}^b)_f^{-s}\times \mathcal{A}(\mathbf{G}^c)_{e,f}^{t+1,-1}\times \mathrm{P}(s,1),
\]
where $P_0:=\{(j,1)\mid j \in [1,s]\}$ is a unique element of $\mathrm{P}(s,1)$. 
By Lemma \ref{lem:gamma0}, $\mathcal{X}^c$ contains the arc $\gamma_0$ in Lemma \ref{lem:gamma0}, and there are no arcs $\gamma$ in $\mathcal{X}^c\setminus \{\gamma_0\}$ satisfying $(g_e(\gamma),g_f(\gamma)) \neq (0,0)$. By Theorem \ref{thm:gluing}, we have 
\[
   g_e(\mathcal{X}^b \underset{e_{P_0}}{\times}\mathcal{X}^c) = g_e(\gamma_0)\cdot s + g_e(\mathcal{X}^c\setminus \{\gamma_0\}) = s+t. 
\]

\item[(ii)] Let
\[
   (\mathcal{X}^b,\mathcal{X}^c,P)\in \mathcal{A}(\mathbf{G}^b)_f^{s}\times \mathcal{A}(\mathbf{G}^c)_{e,f}^{1,-t-1}\times \mathrm{P}(s,t+1)|_l.
\]
By Lemma \ref{lem:gamma0}, $\mathcal{X}^c$ contains the arc $\gamma_0$, and an intersection point $p$ of $\gamma_0$ and $\alpha_e$ lies in the first position in clockwise ordering around $c$. In addition, there are no arcs $\gamma$ in $\mathcal{X}^c\setminus \{\gamma_0\}$ satisfying $(g_e(\gamma),g_f(\gamma)) \neq (0,0)$. Therefore, 
\[
   g_e(\mathcal{X}^b\underset{e_P}{\times}\mathcal{X}^c) = g_e(\gamma_0)\cdot P(-,1) = l. 
\]
\item[(iii)] Let 
\[
   (\mathcal{X}^b,\mathcal{X}^c,P)\in \mathcal{A}(\mathbf{G}^b)_f^{s}\times \mathcal{A}(\mathbf{G}^c)_{e,f}^{t-u+1,u}\times \mathrm{P}(s,u).
\]
By Lemma \ref{lem:gamma0}, there are no arcs $\gamma\in \mathcal{X}^c$ satisfying $(g_e(\gamma),g_f(\gamma))\neq (0,0)$.
Therefore, 
\[
   g_e(\mathcal{X}^b \underset{e_P}{\times}\mathcal{X}^c) = g_e(\mathcal{X}^c) = t-u+1.
\]
\end{enumerate}
Consequently, we get (\ref{eq:AA})-(\ref{eq:CC}). In addition, the set $\mathcal{A}(\mathbf{G})_e^{>0}$ is a disjoint union of these images by Theorem \ref{thm:gluing} with Lemma \ref{lem:Gc}.
\end{proof}

\begin{proof}[Proof of Lemma \ref{lem:1}(2)]
   By Lemma \ref{lem:1}(1), we clearly have (\ref{eq:AA})-(\ref{eq:CC}). In the following, we determine their cardinality under the induction hypothesis ($\ast$). 
   First, we have 
   \[
      \#\mathcal{A}(\mathbf{G}^b)_f^s = \#\mathcal{A}(\mathbf{G}^b)_f^{-s} = \binom{2n_b-s-1}{n_b-1}
   \]
   by induction hypothesis ($\ast$). Second, we have 
   \[
      \#\mathrm{P}(s,t) = \binom{s+t-2}{s-1} = \binom{s+t-2}{t-1} \quad \text{and} \quad \#\mathrm{P}(s,t+1)|_l = \#\mathrm{P}(s-l+1,t)
   \]
   for any positive integers $s,t$ and an integer $l\in [1,s]$.

   Finally, we show that
   \begin{equation} \label{eq:tu}
      \#\mathcal{A}(\mathbf{G})_{e,f}^{t+1,-1} = \#\mathcal{A}(\mathbf{G})_{e,f}^{1,-t-1} = \#\mathcal{A}(\mathbf{G})_{e,f}^{t-u+1,u} = \binom{2n_c-t-3}{n_c-2}.
   \end{equation}
   Consider a flip $\mathbf{H}:=\mu_e(\mathbf{G}^c)$ at $e$ (see Definition \ref{def:flip}), it is equivalent to see that 
   \[
      \#\mathcal{A}(\mathbf{H})_{e,f}^{-t-1,t} = \#\mathcal{A}(\mathbf{H})_{e,f}^{-1,-t} = \#\mathcal{A}(\mathbf{H})_{e,f}^{-t+u-1,t+1} = \binom{2n_c-t-3}{n_c-2}.
   \]
   by Proposition \ref{prop:flip}.

   Now, the edge $f$ determines plane subtrees $\mathbf{H}^b, \mathbf{H}^c$ of $\mathbf{H}$ satisfying $H_1^b\cap H_1^c =\{f\}$ and $H_1^b\cup H_1^c=H_1$. We may assume that, for $v\in \{b,c\}$, $\mathbf{H}^v$ contains the edge $f$ as its external edge with external vertex $v$. Namely, $H_1^b=\{e,f\}$ and $H_1^c= H_1\setminus \{e\}$. 
\[
   \renewcommand{\arraystretch}{1}
   \setlength{\tabcolsep}{1mm}
   \begin{tabular}{ccccccccc}
      \small$\mathbf{H}$ & &\small$\mathbf{H}^b$ & \small$\mathbf{H}^c$ \\
      \begin{tikzpicture}
         \node(b) at (0.6,0) {\tiny$b$};
         \node(c) at (-0.6,0) {\tiny$c$};
         \node(a) at (-1.8,0) {\tiny$a$};
         \draw (b)--node[fill=white,inner sep=1]{\footnotesize$f$}(c); 
         \draw (b)--($(b)+(45:0.8)$) (b)--($(b)+(0:0.8)$) (b)--($(b)+(-90:0.8)$);
         \draw (c)--node[fill=white,inner sep=1]{\footnotesize$e$}(a);
         
         \draw (b)circle(1mm);
         \draw (c)circle(1mm);
         \draw (a)circle(1mm);
   
         \node(xx) at (0,-0.8) {};
         \node(xx) at (0,0.8) {};
         
      \end{tikzpicture}
      &
   \begin{tikzpicture}
      \node(sq) at (0,0) {$\rightsquigarrow$};
      \node(xx) at (0,-0.8) {};
         \node(xx) at (0,0.8) {};
   \end{tikzpicture}
      &
      \begin{tikzpicture}
         \node(b) at (0.6,0) {\tiny$b$};
         \node(c) at (-0.6,0) {\tiny$c$};
         \node(a) at (-1.8,0) {\tiny$a$};
         \draw (b)--node[fill=white,inner sep=1]{\footnotesize$f$}(c);
         \draw (c)--node[fill=white,inner sep=1]{\footnotesize$e$}(a);
         
         \draw (b)circle(1mm);
         \draw (c)circle(1mm);
         \draw (a)circle(1mm);
   
         \node(xx) at (0,-0.8) {};
         \node(xx) at (0,0.8) {};
         
      \end{tikzpicture}
      & \quad
      \begin{tikzpicture}
         \coordinate(bl) at (-0.2,0.6);
         \node(a) at (0.6,0) {\tiny$b$};
         \node(b) at (-0.6,0) {\tiny$c$};
         \draw (a)--node[fill=white,inner sep=1]{\footnotesize$f$}(b); 
         \draw (a)--($(a)+(45:0.8)$) (a)--($(a)+(0:0.8)$) (a)--($(a)+(-90:0.8)$);
         
         \draw (a)circle(1mm);
         \draw (b)circle(1mm);
         \node(xx) at (0,-0.8) {};
         \node(xx) at (0,0.8) {};
      
      \end{tikzpicture}
   \end{tabular}
   \]

   For the plane tree $\mathbf{H}^b$ having $2$ edges, we have
   \[
      \mathcal{A}(\mathbf{H}^b)_{e}^{<0} = \mathcal{A}(\mathbf{H}^b)_{e,f}^{-2,1}\sqcup \mathcal{A}(\mathbf{H}^b)_{e,f}^{-1,-1} \sqcup \mathcal{A}(\mathbf{H}^b)_{e,f}^{-1,2} = \{A_1\} \sqcup \{A_2\} \sqcup \{A_3\}. 
   \]
   Here, $A_i$ are complete collections in Example \ref{ex:collections} satisfying
   \[
      (g_e(A_1),g_f(A_1))=(-2,1), \quad (g_e(A_2),g_f(A_2))=(-1,-1) \quad \text{and} \quad (g_e(A_3),g_f(A_3))=(-1,2).
   \] 
   From this decomposition, we have  
   \begin{eqnarray}
      \mathcal{A}(\mathbf{H})_{e,f}^{-t-1,t}  &=& \rho_f^{1,t}(\{A_1\}\times \mathcal{A}(\mathbf{H}^c)_{f}^t, \{P_0\}), \label{eq:aa} \\ 
      \mathcal{A}(\mathbf{H})_{e,f}^{-1,-t} &=& \rho_f^{-1,-t}(\{A_2\}\times \mathcal{A}(\mathbf{H}^c)_{f}^{-t}, \{P_0\}) \ \text{and} \label{eq:bb}\\ 
      \mathcal{A}(\mathbf{H})_{e,f}^{-t+u-1,t+1} &=& \rho_f^{2,u}(\{A_3\}\times \mathcal{A}(\mathbf{H}^c)_{f}^{t}, \{P_u\}). \label{eq:cc}
   \end{eqnarray}
   by Theorem \ref{thm:gluing}.
   Here, $P_0:=\{(1,k)\mid k\in [1,t]\}$ is a unique element of $\mathrm{P}(1,t)$ and 
   \[
      \mathrm{P}(2,t) = \{P_u:= \{(1,1),\ldots, (1,t-u+1),(2,t-u+1),\ldots, (2,t)\} \mid u\in [1,t]\}. 
   \]
   Since the maps $\rho_f^{-,-}$ are injective, the cardinality of (\ref{eq:aa})-(\ref{eq:cc}) are precisely
   \[
      \#\mathcal{A}(\mathbf{H}^c)_{f}^t = \#\mathcal{A}(\mathbf{H}^c)_{f}^{-t} = \binom{2n_c-t-3}{n_c-2} 
   \]
   by induction hypothesis ($\ast$). Consequently, we obtain the desired equation (\ref{eq:tu}). 

   It finishes a proof of Theorem \ref{lem:1}(2).
\end{proof}

\subsection{Proof of Lemma \ref{lem:2}} \label{sec:lem:2} 
We give a proof of Lemma \ref{lem:2}.

\begin{proof}[Proof of Lemma \ref{lem:2}]
Fix $m=p+q-1$ for integers $p>0$ and $q>1$. 
We show the following desired equation by induction on $q$:
\begin{equation} \label{Part4:eq_desireddd}
   F_{p+q-1}(j)= A_{q-1}(j)+ B_{q-1}(j)+ C_{q-1}(j) 
\end{equation}
for all $j \in [1,p+q-1]$. Here, we set 
\begin{eqnarray} 
   A_{q-1}(j) := A_{p,q-1}(j) &=& \displaystyle\sum_{\substack{s\in [1,p] \\ t\in [1,q-1] \\ s+t=j}} F_p(s)F_{q-1}(t), \nonumber\\ 
   B_{q-1}(j) := B_{p,q-1}(j) &=& \displaystyle\sum_{\substack{s\in [1,p] \\ t\in [1,q-1]}} F_{p}(s)F_{q-1}(t)P(s-j+1,t), \nonumber\\ 
   C_{q-1}(j) := C_{p,q-1}(j) &=& \displaystyle\sum_{\substack{s\in [1,p] \\ t\in [1,q-1]\\ u\in [1,t]\\ t-u+1=j}} F_p(s)F_{q-1}(t)P(s,t-j+1), \nonumber
\end{eqnarray}
where $F_p(s):=\binom{2p-s-1}{p-1}$ and $P(s,t):=\binom{s+t-2}{s-1}=\binom{s+t-2}{t-1}$.

First, we assume that $q=2$. 
By definition, we have 
\[
   A_{1}(j)=
   \begin{cases} 0 &\text{if $j=1$} \\ F_p(j-1) & \text{else,}\end{cases}\quad 
   B_{1}(j)= 
   \begin{cases} F_{p+1}(2) & \text{if $j=1$} \\ \sum_{s=j}^p F_{p}(s) & \text{else,} \end{cases} \quad  
   C_{1}(j)=
   \begin{cases} F_{p+1}(2) &\text{if $j=1$} \\ 0 & \text{else.}\end{cases}
\]
So, we have 
\[A_1(1) + B_1(1) + C_1(1) = 2F_{p+1}(2) = F_{p+1}(1)
\]
for $j=1$, and  
\[
   A_1(j) + B_1(j) + C_1(j) = \sum_{s=j-1}^p F_p(s) = F_{p+1}(j)
\]
for $j \in \{2, \ldots, p+1\}$. Therefore, the desired equation holds for $q=2$. 
   
Second, we assume that the equation (\ref{Part4:eq_desireddd}) holds for $q$. Under the induction hypothesis, we show that 
\begin{equation} \label{Part4:eq_desired}
      F_{p+q}(j)= A_{q}(j)+ B_{q}(j)+ C_{q}(j) 
\end{equation}
for $j\in [1,p+q]$.
We first consider $j>1$. From the induction hypothesis, we have 
\[
   F_{p+q}(j)  = \sum_{i=j-1}^{p+q-1} F_{p+q-1}(i)  = \sum_{i=j-1}^{p+q-1}A_{q-1}(i) +\sum_{i=j-1}^{p+q-1} B_{q-1}(i) +\sum_{i=j-1}^{p+q-1}C_{q-1}(i). 
\]
We calculate each summand in the right-hand side.
   
\begin{enumerate}
   
\item[(a)]
By definition, we have
\begin{eqnarray}
   \sum_{i=j-1}^{p+q-1}A_{q-1}(i) &=& 
   \sum_{i=j-1}^{p+q-1} \sum_{\substack{s \in [1,p] \\ t \in [1,q-1] \\ i=s+t}}F_p(s) F_{q-1}(t) 
   \nonumber
   \\
   &=& 
   \sum_{s=j-1}^{p} F_p(s) \sum_{t=1}^{q-1} F_{q-1}(t) + 
   \sum_{s=1}^{j-2}F_{p}(s) \sum_{t=j-s-1}^{q-1} F_{q-1}(t).  \nonumber \\
   &=& F_{p+1}(j)F_{q}(2) + \sum_{s=1}^{j-2} F_{p}(s)F_{q}(j-s) \nonumber \\ 
   &=& A_{q}(j) - F_p(j-1)F_q(1) + F_{p+1}(j) F_{q}(2). \label{Part4:eq_I}
\end{eqnarray}
   
\item[(b)] We continue our calculation.  
\begin{eqnarray}
   \sum_{i=j-1}^{p+q-1}B_{q-1}(i) &=& 
   \sum_{t=1}^{q-1} F_{q-1}(t) \Big\{ \sum_{i=j-1}^{p+q-1}\sum_{s=i}^{p} F_p(s) P(s-i+1, t) \Big\} \nonumber \\
   &=& \sum_{t=1}^{q-1} F_{q-1}(t)\Big\{\sum_{s=j-1}^p F_p(s) + \sum_{s=j}^pF_p(s) \sum_{\lambda=1}^{s-j+1} P(\lambda+1 , t)\Big\}. \nonumber \\
   &=& F_{p+1}(j)F_q(2) + \sum_{t=1}^{q-1} F_{q-1}(t) 
   \Big\{\sum_{s=j}^pF_p(s) \sum_{\lambda=1}^{s-j+1} P(\lambda+1, t)\Big\} \nonumber\\
   &=& F_{p+1}(j)F_q(2) + \sum_{t=1}^{q-1} F_{q-1}(t) 
   \Big\{\sum_{s=j}^pF_p(s) \sum_{\mu=1}^{t} P(s-j+1, \mu+1)\Big\} \nonumber\\ 
   &=& F_{p+1}(j)F_q(2) + \sum_{t=1}^{q-1}\sum_{u=t}^{q-1} F_{q-1}(u) \sum_{s=j}^pF_{p}(s)P(s-j+1,t+1) \nonumber\\
   &=& F_{p+1}(j)F_q(2) + \sum_{t=1}^{q-1}\sum_{s=j}^{p}F_{q}(t+1) F_p(s)P(s-j+1,t+1) \nonumber \\
   &=& B_{q}(j) - F_{p+1}(j+1)F_q(1) +F_{p+1}(j)F_{q}(2),  \label{Part4:eq_II}
\end{eqnarray}
where the last two equalities are obtained by replacing $t \rightarrow t+1$.

\item[(c)] Finally, we get 
\begin{eqnarray}
   \sum_{i=j-1}^{p+q-1}C_{q-1}(i) &=& 
   \sum_{s=1}^p F_p(s) \Big\{ \sum_{i=j-1}^{p+q-1} \sum_{t=1}^{q-i} F_{q-1}(t+i-1)P(s,t)\Big\} 
   \nonumber\\ 
   &=& \sum_{s=1}^p F_p(s) \sum_{t=1}^{q-j+1} F_q(t+j-1)P(s, t) \nonumber \\
   &=& C_q(j). \label{Part4:eq_III}
\end{eqnarray}
\end{enumerate}
   
Adding (\ref{Part4:eq_I})--(\ref{Part4:eq_III}), we get the desired equation (\ref{Part4:eq_desired})
for $j\in \{2,\ldots, p+q\}$ by
\[
   F_{p+1}(j)F_{q}(2)- F_p(j-1)F_q(1) - F_{p+1}(j+1)F_q(1) + F_{p+1}(j)F_q(2)=0.  
\]  
   On the other hand, we get the equation for $j=1$ from the previous result:
\[
   F_{p+q}(1) =2F_{p+q}(2)=2A_q(2) + 2B_q(2)+ 2C_{q}(2)= A_q(1) + B_q(1)+ C_{q}(1).
\]   
We finish a proof of Lemma \ref{lem:2}. 
\end{proof}

\medskip\noindent{\bf Acknowledgements}. 
The author would like to express his deep gratitude to his supervisor Osamu Iyama 
for his support and advice. He would also like to thank Aaron Chan for helpful comments and suggestions. He is a Research Fellows of Society for the Promotion of Science (JSPS). This work was supported by JSPS KAKENHI Grant Number JP19J11408.

\bibliographystyle{alpha}
\bibliography{bibitem}

\newcommand{\etalchar}[1]{$^{#1}$}
\begin{thebibliography}{ABCJP10}

\bibitem[AAC18]{AAC}
Takahide Adachi, Takuma Aihara, and Aaron Chan.
\newblock Classification of two-term tilting complexes over {B}rauer graph
  algebras.
\newblock {\em Math.Z.}, 290(1-2):1--36, 2018.

\bibitem[ABCJP10]{ABCJP}
Ibrahim Assem, Thomas Br\"{u}stle, Gabrielle Charbonneau-Jodoin, and Pierre-Guy
  Plamondon.
\newblock Gentle algebras arising from surface triangulations.
\newblock {\em Algebra Number Theory}, 4(2):201--229, 2010.

\bibitem[Ada16]{Adachi16b}
Takahide Adachi.
\newblock The classification of {$\tau$}-tilting modules over {N}akayama
  algebras.
\newblock {\em J. Algebra}, 452:227--262, 2016.

\bibitem[AI12]{AI}
Takuma Aihara and Osamu Iyama.
\newblock Silting mutation in triangulated categories.
\newblock {\em J. Lond. Math. Soc. (2)}, 85(3):633--668, 2012.

\bibitem[Aih13]{Aihara13}
Takuma Aihara.
\newblock Tilting-connected symmetric algbras.
\newblock {\em Algebr. Represent. Theory}, 16(3):873--894, 2013.

\bibitem[Aih14]{Aihara14}
Takuma Aihara.
\newblock Mutating {B}rauer trees.
\newblock {\em Math. J. Okayama Univ.}, 56:1--16, 2014.

\bibitem[AIR14]{AIR}
Takahide Adachi, Osamu Iyama, and Idun Reiten.
\newblock {$\tau$}-tilting theory.
\newblock {\em Compos. Math.}, 150(3):415--452, 2014.

\bibitem[AMN20]{AMN}
Hideto Asashiba, Yuya Mizuno, and Ken Nakashima.
\newblock Simplicial complexes and tilting theory for {B}rauer tree algebras.
\newblock {\em J. Algebra}, 551:119--153, 2020.

\bibitem[Aok18]{Aoki18}
Toshitaka Aoki.
\newblock Classifying torsion classes for algebras with radical square zero via
  sign decomposition.
\newblock {\em arXiv preprint arXiv:1803.03795}, 2018.

\bibitem[AY]{AY20}
Toshitaka Aoki and Toshiya Yurikusa.
\newblock Complete special biserial algebras are $g$-tame.
\newblock {\em arXiv preprint arXiv:2003.09797v2}.

\bibitem[AZ14]{AZ}
M.~A. Antipov and A.~O. Zvonar\"{e}va.
\newblock Two-term partial tilting complexes over {B}rauer tree algebras.
\newblock {\em J. Math. Sci.}, 202(3):333--345, 2014.

\bibitem[CR62]{CR}
Charles~W. Curtis and Irving Reiner.
\newblock {\em Representation theory of finite groups and associative
  algebras}.
\newblock Pure and Applied Mathematics, Vol. XI. Interscience Publishers, a
  division of John Wiley \& Sons, New York-London, 1962.

\bibitem[DIR{\etalchar{+}}18]{DIRRT}
Laurent Demonet, Osamu Iyama, Nathan Reading, Idun Reiten, and Hugh Thomas.
\newblock Lattice theory of torsion classes.
\newblock {\em arXiv preprint arXiv:1711.01785v2}, 2018.

\bibitem[HKK17]{HKK}
F.~Haiden, L.~Katzarkov, and M.~Kontsevich.
\newblock Flat surfaces and stability structures.
\newblock {\em Publ. Math. Inst. Hautes \'{E}tudes Sci.}, 126:247--318, 2017.

\bibitem[IJY14]{IJY}
Osamu Iyama, Peter J{\o}rgensen, and Dong Yang.
\newblock Intermediate co-{$t$}-structures, two-term silting objects,
  {$\tau$}-tilting modules, and torsion classes.
\newblock {\em Algebra Number Theory}, 8(10):2413--2431, 2014.

\bibitem[Kau98]{Kauer98}
Michael Kauer.
\newblock Derived equivalence of graph algebras.
\newblock In {\em Trends in the representation theory of finite-dimensional
  algebras ({S}eattle, {WA}, 1997)}, volume 229 of {\em Contemp. Math.}, pages
  201--213. Amer. Math. Soc., Providence, RI, 1998.

\bibitem[Kim20]{Kimura}
Yuta Kimura.
\newblock Tilting theory of noetherian algebras.
\newblock {\em arXiv preprint arXiv:2006.01677}, 2020.

\bibitem[KS02]{KS}
Mikhail Khovanov and Paul Seidel.
\newblock Quivers, {F}loer cohomology, and braid group actions.
\newblock {\em J. Amer. Math. Soc.}, 15(1):203--271, 2002.

\bibitem[KY14]{KY}
Steffen Koenig and Dong Yang.
\newblock Silting objects, simple-minded collections, {$t$}-structures and
  co-{$t$}-structures for finite-dimensional algebras.
\newblock {\em Doc. Math.}, 19:403--438, 2014.

\bibitem[Lab13]{Labourie13}
Fran\c{c}ois Labourie.
\newblock {\em Lectures on representations of surface groups}.
\newblock Zurich Lectures in Advanced Mathematics. European Mathematical
  Society (EMS), Z\"{u}rich, 2013.

\bibitem[LP20]{LP}
Yank{\i} Lekili and Alexander Polishchuk.
\newblock Derived equivalences of gentle algebras via {F}ukaya categories.
\newblock {\em Math. Ann.}, 376(1-2):187--225, 2020.

\bibitem[MS14]{MS14}
Robert~J. Marsh and Sibylle Schroll.
\newblock The geometry of {B}rauer graph algebras and cluster mutations.
\newblock {\em J. Algebra}, 419:141--166, 2014.

\bibitem[PPP19]{PPP}
Yann Palu, Vincent Pilaud, and Pierre-Guy Plamondon.
\newblock Non-kissing and non-crossing complexes for locally gentle algebras.
\newblock {\em J. Comb. Algebra}, 3(4):401--438, 2019.

\bibitem[Ric89]{Rickard89st}
Jeremy Rickard.
\newblock Derived categories and stable equivalence.
\newblock {\em J. Pure Appl. Algebra}, 61(3):303--317, 1989.

\bibitem[Sch18]{Schroll18}
Sibylle Schroll.
\newblock Brauer graph algebras: a survey on {B}rauer graph algebras,
  associated gentle algebras and their connections to cluster theory.
\newblock In {\em Homological methods, representation theory, and cluster
  algebras}, CRM Short Courses, pages 177--223. Springer, Cham, 2018.

\end{thebibliography}

\end{document}